\setlist{itemsep=.06125in}
\numberwithin{equation}{section}
\theoremstyle{plain}
\newtheorem{theorem}{Theorem}[section]
\newtheorem{lemma}[theorem]{Lemma}
\theoremstyle{definition}
\newtheorem{definition}[theorem]{Definition}
\theoremstyle{remark}
\newtheorem{remark}[theorem]{Remark}
\newtheorem{case[theorem]}{Case}
\def\blue{\textcolor{blue}}
\def\red{\textcolor{red}}
\date{\today}      
\author{K. Aldaleh, W. Burstein, G. Garza, G. Hart, A. Iosevich, J. Iosevich, 
A. Khalil, J. King, N. Kulkarni, T. Le, I. Li, A. Mayeli, B. McDonald, 
K. Nguyen, and N. Shaffer}
\address{Department of Mathematics, University of Delaware, Newark DE, USA}
\email{giogarza@udel.edu}
\address{Department of Mathematics, University of Rochester, Rochester, NY, USA}
\email{iosevich@gmail.com}
\address{Department of Mathematics, CUNY Graduate Center, New York, NY, USA}
\email{amayeli@gc.cuny.edu } 
\thanks{A.~I. was supported in part by the National Science Foundation under NSF DMS-2154232. 
A.~M. was supported in part by the AMS-Simons Research Enhancement Grant and 
by the National Science Foundation under NSF DMS-2453769.} 
\title{The Fourier Ratio and complexity of signals}
\begin{document}

\begin{abstract}
We study the Fourier ratio of a signal $f:\mathbb Z_N\to\mathbb C$,
\[
\mathrm{FR}(f)\ :=\ \sqrt{N}\,\frac{\|\widehat f\|_{L^1(\mu)}}{\|\widehat f\|_{L^2(\mu)}}
\ =\ \frac{\|\widehat f\|_1}{\|\widehat f\|_2},
\]
as a simple scalar parameter governing Fourier-side complexity, structure, and learnability. Using the Bourgain--Talagrand theory of random subsets of orthonormal systems, we show that signals concentrated on generic sparse sets necessarily have large Fourier ratio, while small $\mathrm{FR}(f)$ forces $f$ to be well-approximated in both $L^2$ and $L^\infty$ by low-degree trigonometric polynomials. Quantitatively, the class $\{f:\mathrm{FR}(f)\le r\}$ admits degree $O(r^2)$ $L^2$-approximants, which we use to prove that small Fourier ratio implies small algorithmic rate--distortion, a stable refinement of Kolmogorov complexity.

We further show that $\mathrm{FR}(f)$ is the natural controlling parameter in a sharpened discrete Fourier uncertainty principle, relate $L^1$ and $L^2$ Fourier concentration, and prove that $\mathrm{FR}(f)$ is stable under random restriction and additive Gaussian noise. Motivated by examples on $\mathbb Z_{pq}$, we introduce the bi-Fourier ratio
\[
\mathrm{FR}_{\mathrm{bi}}(f):=\min\{\mathrm{FR}(f),\mathrm{FR}(\widehat f)\},
\]
which is invariant up to constants under the Fourier transform and better captures algorithmic complexity in settings where moving between $f$ and $\widehat f$ is computationally cheap.

On the learning-theoretic side, we bound the Vapnik–Chervonenkis and statistical query dimensions of the concept class $\{f:\mathrm{FR}(f)\le r\}$ via the low-degree approximation theorems, and we show that signals with small Fourier ratio admit accurate imputation of missing values from a number of random samples comparable to their effective Fourier sparsity, using tools from compressed sensing. Finally, we prove a Chang-type lemma for general signals, showing that small Fourier ratio forces strong additive structure in the set of large Fourier coefficients, and we compute the Fourier ratio for several real-world time series while numerically probing the constants in the underlying Talagrand- and Bourgain-type inequalities.
\end{abstract}

\maketitle

\tableofcontents

\section{Introduction}
\label{section:introduction} 

Let $f: {\mathbb Z}_N \to {\mathbb C}$ be a signal. The purpose of this paper is to explore a simple mechanism to determine whether $f$ contains sufficient structure to learn this signal using a relatively small number of samples. A simple, common-sense idea is that it is challenging to learn a ``random signal", but the task becomes considerably easier in the presence of structure. The question that has been addressed by many authors in a variety of contexts is how to determine the degree to which a given signal behaves like a random one. The central theme of this paper is that the quantity 
\begin{equation} \label{eq:mama} 
    \text{FR}(f) \equiv \sqrt{N} \cdot \frac{{\|\widehat{f}\|}_{L^1(\mu)}}{{\|\widehat{f}\|}_{L^2(\mu)}}=\frac{{\|\widehat{f}\|}_{L^1({\mathbb Z}_N)}}{{\|\widehat{f}\|}_{L^2({\mathbb Z}_N)}}.
\end{equation}
is an effective detector of randomness and structure, where here and throughout, if $g: {\mathbb Z}_N \to {\mathbb C}$, 
\begin{equation} \label{eq:probnorm}
    {\|g\|}_{L^p(\mu)}={\left( \frac{1}{N} \sum_{x \in {\mathbb Z}_N} {|g(x)|}^p \right)}^{\frac{1}{p}},
\end{equation}
\begin{equation} \label{eq:lpnorm}
    {\|g\|}_{p}={\left( \sum_{x \in {\mathbb Z}_N} {|g(x)|}^p \right)}^{\frac{1}{p}},
\end{equation}
and 
$$ \widehat{f}(m)=\frac{1}{\sqrt{N}} \sum_{x \in {\mathbb Z}_N} \chi(-xm) f(x),$$ where $\chi(t)=e^{\frac{2 \pi i t}{N}}$. 

\vskip.125in 

The initial draft of this paper was written during the StemForAll2025 (\cite{stemforall2025}) research program organized by the 5th and 12th listed authors in July/August 2025. The paper was finished under the auspices of the continuous Vertical Integration of Research program, also supervised by the 5th and the 12th listed authors of the paper, designed to bring undergraduates, graduate students, postdocs, and faculty together to work on research.

\vskip.125in 

\subsection{Structure of this paper} \label{subsection:structure} We are now going to describe the structure of the paper. \begin{itemize} 

\item In Subsection \ref{subsection:notation}, we lay down the notation frequently used in this paper. 

\item In Subsection \ref{subsection:talagrand}, we state the celebrated result due to Talagrand (\cite{Talagrand98}) on random subsets of sets of orthogonal functions and use it to illustrate the Fourier Ratio $\text{FR}(f)$ (\ref{eq:mama}) in a natural context in Subsection \ref{subsection:firstappearance}. 

\item In Subsection \ref{subsection:aboveandbelow}, we study the range of the Fourier Ratio, and show that $1 \leq \text{FR}(f) \leq \sqrt{N}$. 

\item In Subsection \ref{subsection:FRcontrollingparameter}, we prove refined lower and upper bounds for the Fourier Ratio and show that it is a natural controlling parameter in the classical Fourier Uncertainty Principle. 

\item In Subsection \ref{subsection:keyexample} we study two key illustrative examples. The first is the indicator function of ${\mathbb Z}_p$ embedded in ${\mathbb Z}_{pq}$, where $p$ and $q$ are distinct primes. This example shows that $\text{FR}(f)$ and $\text{FR}(\widehat{f})$ should be considered in analyzing the complexity of a signal. The second is an inverse Fourier transform of a random set. In spite of the randomness, we prove in this paper that it is relatively easy to impute the missing values of such a signal from small samples. 

\item In Subsection \ref{subsection:main}, we prove that if the Fourier Ratio is small, then the signal can be well-approximated by a low degree trigonometric polynomial, either in $L^{\infty}$ or $L^2$ norms using probabilistic tools. 
% TODO L1 norms also (maybe Lp)

\item In Subsection \ref{subsection:CKScomplexity}, we leverage the results from Subsection \ref{subsection:main} to prove that signals with small Fourier Ratio have small Kolmogorov complexity, or, more precisely, small algorithmic rate distortion (\ref{eq:algorithmicratedistortion}). 

\item In Subsection \ref{susbsection:imputation}, we use the theory of the Fourier Ratio built up to that point, along with some ideas from compressed sensing to show that a small Fourier Ratio allows one to impute the missing values in time series with high accuracy under some realistic assumptions on the size of the missing set. In the process, we show that the Fourier Ratio of a signal does not change much if we restrict it to a sufficiently large random subset. 

\item In Subsection \ref{subsection:dimensions}, we show that both the Vapnik-Chervonenkis and statistical query dimension of the class of signals with a small Fourier Ratio are small. Quantitative bounds are provided. 

\item In Subsection \ref{subsection:stability}, we prove a series of results showing that the Fourier Ratio is stable under small perturbations. 

\item In Subsection \ref{subsection:chang}, we study the Fourier Ratio in the context of Chang's lemma from additive number theory. We show that a small Fourier Ratio implies a considerable amount of additive structure. 

\item Finally, in Section \ref{section:numerical}, we perform a series of numerical experiments to illustrate some of the results in this paper. First, we compute the Fourier Ratio for several very different real-life data sets and show that the Fourier Ratio is quite small. We compute the Fourier Ratio of a random data set and show that the Fourier Ratio is very large, as the theory suggests. Our final set of numerical experiments explores the value of the constant in Talagrand's inequality, as it has a practical bearing on the applications established in this paper. 

\end{itemize} 

\vskip.125in 

\subsection{Normalization and notation} \label{subsection:notation} Throughout this paper, we work on $\mathbb{Z}_N$ with the unitary Fourier transform
$$ \widehat{f}(m)=\frac{1}{\sqrt{N}}\sum_{x\in\mathbb{Z}_N} e^{-\frac{2\pi i xm}{N}} f(x).$$ From this definition of the Fourier transform, one can show the Fourier inversion property, which states that
$$ f(x)=\frac{1}{\sqrt{N}}\sum_{m\in\mathbb{Z}_N} e^{\frac{2\pi i xm}{N}} \widehat{f}(m).$$ 
The Plancherel identity holds in both $\ell^2$ and probability-normalized $L^2(\mu)$:
\[
\|\widehat{f}\|_2=\|f\|_2,
\qquad
\|\widehat{f}\|_{L^2(\mu)}=\|f\|_{L^2(\mu)}.
\]
We frequently convert between $\ell^p$ and $L^p(\mu)$ using 
$$ \|g\|_{L^p(\mu)}=N^{-\frac{1}{p}}\|g\|_{p}
\quad(1\le p<\infty),\qquad
\|g\|_{L^\infty(\mu)}=\|g\|_\infty.$$

\vskip.25in 

\subsubsection{Fourier Ratio} With this normalization,
\[
\text{FR}(f)\ :=\ \sqrt{N}\,\frac{\|\widehat{f}\|_{L^1(\mu)}}{\|\widehat{f}\|_{L^2(\mu)}}
\ =\ \frac{\|\widehat{f}\|_1}{\|\widehat{f}\|_2}
\ \in[1,\sqrt{N}],
\]
is scale-invariant. More precisely, $\text{FR}(\alpha f)=\text{FR}(f)$ for all $\alpha\neq 0$. The extreme cases are 
$\text{FR}(f)=1$ iff $\widehat{f}$ is $1$-sparse, and $\text{FR}(f)=\sqrt{N}$ when $|\widehat{f}(m)|$ is constant.

\medskip
\subsubsection{Numerical sparsity and coherence} The numerical sparsity of a vector $x$ is 
$$\mathrm{ns}(x):=\big(\|x\|_1/\|x\|_2\big)^2,$$ hence
$\text{FR}(f)^2=\mathrm{ns}(\widehat{f})$. See, for example, \cite{BandeiraLewisMixon2015}. We also use the standard notion of coherence,
$$ \mu(f)\ :=\ \frac{N\|f\|_\infty^2}{\|f\|_2^2}\in[1,N],$$ 
which controls random-restriction sampling rates in our results. Similar quantities also come up in the theory of portfolio management (see e.g. \cite{Sharpe1994}). 

\vskip.25in 
\subsubsection{Bi-Fourier ratio} Motivated by the example in Subsection \ref{subsection:keyexample} below, we summarize Fourier complexity using the quantity 
$$ \text{FR}_\mathrm{bi}(f):=\min\{\text{FR}(f),\,\text{FR}(\widehat{f})\}.$$

This is invariant (up to constants) under applying the Fourier transform or its inverse.

\vskip.125in 

\subsection{Some key background results} Our starting point is the following result due to Talagrand (\cite{Talagrand98}) \label{subsection:talagrand}

\begin{theorem}\label{thm:Talagrand98}
Let $(\varphi_j)_{j=1}^n$ be an orthonormal system in 
$L^2(\mathbb{Z}_N)$ with $\|\varphi_j\|_{L^\infty} \leq K$ for $1 \leq j \leq n$.  
There exists a constant $\gamma_0 \in (0,1)$ and a subset 
$I \subset \{1, \dots, n\}$ with $|I| \ge \gamma_0 n$  
such that for every $a = (a_i) \in \mathbb{C}^n$,
$$
\left( \sum_{i \in I} |a_i|^2 \right)^{1/2} 
\leq C_T \, K \, \big( \log(n) \, \log \log(n) \big)^{1/2} 
\left\| \sum_{i \in I} a_i \varphi_i \right\|_{L^1},
$$
where $C_T > 0$ is a universal constant.
\end{theorem}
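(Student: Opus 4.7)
The plan is to argue by the probabilistic method: construct $I$ at random through independent Bernoulli selectors and show that with positive probability the random set satisfies both the cardinality bound and the norm inequality. Let $(\xi_i)_{i=1}^{n}$ be i.i.d.\ Bernoulli random variables with $\mathbb{P}(\xi_i=1)=\gamma$ for a small absolute constant $\gamma\in(0,1)$, and put $I=\{i:\xi_i=1\}$. Then $|I|$ concentrates around $\gamma n$, so $|I|\ge \gamma n/2=:\gamma_0 n$ with probability close to $1$; the task is to exhibit an event of positive probability on which the random $I$ also satisfies the desired inequality uniformly in all $a\in\mathbb{C}^n$ supported on $I$.

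\textbf{Step 1: Duality to an $L^p$ upper bound.} Write $g_a:=\sum_{i\in I}a_i\varphi_i$. By orthonormality $\|g_a\|_{L^2}=\|a\|_{\ell^2(I)}$, so the target inequality becomes
\[
\|g_a\|_{L^1}\ \ge\ \frac{1}{C_T\,K\,\sqrt{\log n\,\log\log n}}\,\|g_a\|_{L^2}.
\]
For any $p>2$, log-convexity of $L^q$-norms gives $\|g_a\|_{L^2}\le \|g_a\|_{L^1}^{\theta}\|g_a\|_{L^p}^{1-\theta}$ with $\theta=(p-2)/(2(p-1))$, which rearranges into a lower bound for $\|g_a\|_{L^1}$ in terms of $\|g_a\|_{L^2}$ and $\|g_a\|_{L^p}$. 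Consequently, it suffices to prove the companion $L^p$ upper bound
\[
\sup_{\|a\|_{\ell^2(I)}\le 1}\|g_a\|_{L^p}\ \le\ C\,K\,\sqrt{\log n\,\log\log n}
\]
for some fixed $p>2$; the precise value of $p$ only affects the numerical constant.

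\textbf{Step 2: Symmetrization and generic chaining.} By a standard symmetrization step the expected supremum in Step 1 is controlled, up to constants, by the conditional Rademacher process
\[
Z(a):=\sum_{i\in I}\varepsilon_i\,a_i\,\varphi_i,
\]
indexed by the unit $\ell^2(I)$-ball and measured in $L^p$. The uniform bound $\|\varphi_i\|_{L^\infty}\le K$ equips the index set with the subgaussian pseudometric $d(a,a')\le K\|a-a'\|_{\ell^2}$. A crude Dudley-entropy bound for this pseudometric already gives an estimate of order $K\sqrt{\log n}$; the additional $\sqrt{\log\log n}$ factor is the characteristic cost of Talagrand's refinement, which applies his majorizing-measure upper bound for generic chaining together with the observation that, for bounded orthonormal systems, the tails of $Z(a)$ admit a sharper subgaussian control than the pointwise $L^\infty$ bound alone suggests. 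This is the technical heart of \cite{Talagrand98} and the step I expect to be the main obstacle.

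\textbf{Step 3: Concentration and extraction.} To upgrade from the expected supremum to a high-probability bound I would invoke Talagrand's concentration inequality for suprema of empirical processes. Combining this with the binomial concentration of $|I|$ around $\gamma n$, a union bound produces a single realization of the selectors $\xi$ for which $|I|\ge\gamma_0 n$ and the $L^p$ upper bound of Step 2 hold simultaneously. By Step 1 this is exactly the set $I$ claimed in the theorem, and the constants can be tracked throughout to pin down admissible values of $\gamma_0$ and $C_T$.
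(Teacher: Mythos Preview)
The paper does not contain a proof of this theorem. It is quoted as a background result from \cite{Talagrand98} (``Our starting point is the following result due to Talagrand'') and is used as a black box; no argument for it appears anywhere in Section~3 or elsewhere. So there is no ``paper's own proof'' to compare your attempt against.

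As for your sketch on its own merits: the high-level architecture --- random Bernoulli selection, reduction via log-convexity of $L^q$-norms to an $L^p\to L^2$ upper bound, a chaining/majorizing-measure estimate, then concentration to extract a good realization --- is in the right spirit for this circle of results, going back to Bourgain's $\Lambda(p)$ argument. But you have not actually proved anything: your Step~2 is a placeholder (``this is the technical heart \dots\ the step I expect to be the main obstacle''), and without it the outline has no content beyond the duality reduction, which is standard. Talagrand's actual proof in \cite{Talagrand98} is not a direct generic-chaining bound of the kind you describe; it proceeds via a rather specific iterative construction, and obtaining the $\sqrt{\log n\,\log\log n}$ factor (as opposed to a crude $\log n$) is precisely where that construction matters. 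If you want to write an honest proof, you would need to either reproduce that argument or supply a genuine alternative for Step~2.
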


We shall need the following version, stated for signals on ${\mathbb Z}_N$, with the roles of $h$ and $\widehat{h}$ reversed. See \cite{BIMN2025}. 

\begin{definition} \label{def:generic} Let $0< p <1$. We say  a random set $S \subset [n]= \{0, 1, \cdots, n-1\}$ is generic if each element of $[n]$ is selected independently with probability $p$.
\end{definition} 

The following result can be deduced from Theorem \ref{thm:Talagrand98} (see \cite{BIMN2025}). 

\begin{theorem} \label{thm: combo} There exists $\gamma_0 \in (0,1)$ such that if $h: {\mathbb Z}_N \to {\mathbb C}$ supported in a generic set $M$ (in the sense of Definition \ref{def:generic}) of size $\gamma_0 \frac{N}{\log(N)}$, then with probability $1-o_N(1)$, 
\begin{equation}\label{eq: talagrand1} {\left( \frac{1}{N}  \sum_{m \in {\mathbb Z}_N} {|\widehat{h}(m)|}^2 \right)}^{\frac{1}{2}} \leq C_T {(\log(N) \log \log(N))}^{\frac{1}{2}} \cdot \frac{1}{N} \sum_{m \in {\mathbb Z}_N} |\widehat{h}(m)|, \end{equation} where $C_T > 0$ is a constant that depends only on $\gamma_0$. 
\end{theorem}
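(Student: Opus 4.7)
The plan is to derive Theorem \ref{thm: combo} from Talagrand's Theorem \ref{thm:Talagrand98} by choosing the characters of $\mathbb{Z}_N$ as the orthonormal system, and then using Plancherel to convert the resulting coefficient bound into the desired $L^2(\mu)$ bound on $\widehat{h}$.

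First, I would set up the dual orthonormal system. Define $\varphi_x(m):=\chi(-xm)$ for $x\in\mathbb{Z}_N$, viewed as a function of the frequency variable $m$. These $N$ functions are orthonormal in $L^2(\mu)$ and satisfy $\|\varphi_x\|_{L^\infty}=1$, so Theorem \ref{thm:Talagrand98} applies with $K=1$ and $n=N$. The key identity is that, directly from the definition of $\widehat{h}$, for $h$ supported on $M$,
\[
\sqrt{N}\,\widehat{h}(m)\ =\ \sum_{x \in M} h(x)\,\varphi_x(m).
\]
Thus $\|\sqrt{N}\,\widehat{h}\|_{L^1(\mu)}$ is precisely the quantity controlled by Talagrand when $M$ plays the role of the index set $I$ and $(h(x))_{x\in M}$ plays the role of the coefficients $(a_i)$.

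The main task is then to show that a generic random subset $M$ of expected size $\gamma_0 N/\log N$ almost surely qualifies as a valid index set in Talagrand's inequality. The standard proof of Theorem \ref{thm:Talagrand98} is itself a Bernoulli selector argument: one chooses indices independently with probability $p$, then invokes a generic chaining / concentration estimate to show that the $L^2\!\to\!L^1$ inequality holds uniformly over $(a_i)$ with probability $1-o(1)$. Specializing that argument at $p=\gamma_0/\log N$ -- a selection sparser than the original $p=\gamma_0$ -- yields a random set of expected size $\gamma_0 N/\log N$ satisfying the same inequality with the same constant $C_T(\log N\log\log N)^{1/2}$, because a sparser selection only tightens the relevant moment estimates on $\sum_{x\in M} a_x\varphi_x$. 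This refinement is carried out in \cite{BIMN2025}.

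Finally, applying the resulting Talagrand inequality with $a_x=h(x)$ gives
\[
\Bigl(\sum_{x \in M} |h(x)|^2\Bigr)^{1/2}\ \leq\ C_T\,(\log N\,\log\log N)^{1/2}\,\|\sqrt{N}\,\widehat{h}\|_{L^1(\mu)}.
\]
By Plancherel, the left-hand side equals $\|h\|_2=\sqrt{N}\,\|h\|_{L^2(\mu)}=\sqrt{N}\,\|\widehat{h}\|_{L^2(\mu)}$; dividing both sides by $\sqrt{N}$ produces the inequality \eqref{eq: talagrand1}. The hard step is the second paragraph: one must verify that lowering the selection probability from $\gamma_0$ down to $\gamma_0/\log N$ preserves the Bourgain--Talagrand bound with the same constant and the same logarithmic losses, which requires tracing through the chaining estimate at a sparser scale rather than invoking Theorem \ref{thm:Talagrand98} as a black box.
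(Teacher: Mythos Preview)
Your proposal is correct and follows exactly the route the paper indicates: the paper does not give its own proof of Theorem~\ref{thm: combo} but simply states that it ``can be deduced from Theorem~\ref{thm:Talagrand98} (see \cite{BIMN2025}),'' and your write-up makes that deduction explicit---setting up the character system $\varphi_x(m)=\chi(-xm)$, identifying $\sqrt{N}\,\widehat{h}$ with the Talagrand sum, and closing with Plancherel. You also correctly flag that the only nontrivial step is passing from the black-box existence statement of Theorem~\ref{thm:Talagrand98} to a high-probability statement for a \emph{generic} set of density $\gamma_0/\log N$, and you defer that step to \cite{BIMN2025}, exactly as the paper does.

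One small simplification you could mention in place of ``tracing through the chaining estimate'': the Talagrand property is inherited by subsets (set the extra coefficients to zero), so once a random set at density $\gamma_0$ works, any further random thinning to density $\gamma_0/\log N$ automatically works as well; the only thing one still needs from \cite{BIMN2025} or from Talagrand's proof is the upgrade of the success probability from a fixed constant to $1-o_N(1)$.
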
 

\vskip.125in 

\begin{remark} \label{rmk:talagrandsharpness} It is known (see \cite{Talagrand98}) that $\sqrt{\log(N)}$ in (\ref{eq: talagrand1}) cannot, in general, be removed, and it is not known whether the removing the remaining $\sqrt{\log \log(N)}$ is possible. \end{remark} 

In some cases, the term ${(\log(N) \log \log(N))}^{\frac{1}{2}}$ can be removed. The following result, stated in the setting of this paper, is due to Bourgain (\cite{Bourgain89}). 

\begin{theorem} \label{thm:bourgain} Suppose that $M$ is generic, as above, $|M|= \lceil N^{\frac{2}{q}} \rceil$, $q>2$. Then for all $h: {\mathbb Z}_N \to {\mathbb C}$, supported in $M$, 
\begin{equation} \label{eq:bourgain} {\left( \frac{1}{N} \sum_{m \in {\mathbb Z}_N} {|\widehat{h}(m)|}^q \right)}^{\frac{1}{q}} \leq C(q) \cdot {\left( \frac{1}{N} \sum_{m \in {\mathbb Z}_N} {|\widehat{h}(m)|}^2 \right)}^{\frac{1}{2}} \end{equation} 

It is not difficult to deduce from the proof that if $M$ is chosen randomly, then the bound holds with probability $1-\varepsilon$ if $C(q)$ is replaced by $\frac{C(q)}{\varepsilon}$. 
\end{theorem}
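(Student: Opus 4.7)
The statement is Bourgain's $\Lambda(q)$-set theorem specialized to $\mathbb Z_N$, so the plan is to adapt his two-step strategy: establish the moment bound for even integer $q=2k$ via a combinatorial expansion, then recover general $q>2$ by interpolation, and finally pass to a high-probability uniform statement by concentration.

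I would first recast the inequality as a bound on the random operator $T_M:\ell^2(M)\to L^q(\mathbb Z_N,\mu)$ defined by $h\mapsto\widehat h$, and model $M$ via independent Bernoulli selectors $\xi_j$ with $\mathbb P(\xi_j=1)=p=N^{2/q-1}$, so that $|M|$ concentrates around $N^{2/q}$. For $q=2k$ an even integer, the approach is to expand
$$\|\widehat h\|_q^q \;=\; \sum_m |\widehat h(m)|^{2k}$$
and apply Plancherel to the $k$-fold product $\widehat h^k$. The result is a sum over $2k$-tuples $(j_1,\dots,j_k,j_1',\dots,j_k')$ in $M$ satisfying the additive relation $j_1+\cdots+j_k\equiv j_1'+\cdots+j_k' \pmod N$, weighted by $\prod h(j_i)\overline{h(j_i')}$. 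Taking expectation over $\xi$ attaches a factor $p^{2k}$, and separating the ``diagonal'' tuples (with repeated indices, contributing the trivial $L^2$ estimate) from ``generic'' tuples (counted via the additive energy of a typical random set of size $N^{2/q}$) yields the target bound $C(q)^q N^{1-k}\|h\|_2^{2k}$ for fixed $h$. For non-integer $q$, I would interpolate between $q=2\lceil q/2\rceil$ and the $L^2$ Plancherel identity using log-convexity of $L^p(\mu)$-norms.

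To pass from fixed-$h$ moment estimates to a uniform statement over all $h$ supported in $M$, I would run a standard covering/chaining argument on the $\ell^2$-unit ball of $\ell^2(M)$, combined with Markov's inequality applied to $\|T_M\|^q$. Markov also delivers the rescaling $C(q)\to C(q)/\varepsilon$ in the failure-probability variant mentioned after the theorem.

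The main obstacle is the uniformity step: the moment inequality is comparatively straightforward for a fixed $h$, but the theorem asserts it simultaneously for \emph{every} $h$ supported in the random $M$. Controlling this supremum is exactly what pins down the sparsity threshold $|M|\asymp N^{2/q}$; a denser $M$ would inflate the additive-energy count and break the bound. Quantifying this trade-off — showing that the entropy of the unit ball of $\ell^2(M)$ in the relevant metric is tamed precisely when $p$ sits at the Bourgain threshold — is the technical heart of the argument and the source of the $q$-dependence in $C(q)$.
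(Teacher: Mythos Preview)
The paper does not prove this theorem at all: Theorem~\ref{thm:bourgain} is stated in the background subsection and attributed to Bourgain~\cite{Bourgain89}, with no proof supplied in the paper. So there is no ``paper's own proof'' to compare against; the authors simply quote the result.

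As for the proposal itself: you correctly identify the statement as Bourgain's $\Lambda(q)$ theorem and correctly locate the difficulty in the uniformity step. However, the phrase ``a standard covering/chaining argument on the $\ell^2$-unit ball of $\ell^2(M)$'' is where the plan breaks down. The entropy of that ball in the $L^q$ metric is far too large for naive chaining to close; if standard chaining sufficed, the problem would not have been open for decades before Bourgain. His actual proof is not a moment expansion plus chaining: it is an intricate iteration that repeatedly decouples the Fourier sum, uses a probabilistic selection at each scale, and controls the resulting interactions via a delicate bootstrapping of $L^p$ estimates. The even-integer moment calculation you describe gives the right heuristic threshold $|M|\asymp N^{2/q}$ in expectation for a \emph{fixed} $h$, but promoting this to a supremum over all $h$ supported in $M$ is precisely the content of Bourgain's argument, and it is not recoverable by Markov plus a net. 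Your outline would yield a weaker result (either a suboptimal size for $M$ or an $N$-dependent constant), not the theorem as stated.
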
 

\vskip.125in 

We learned the following observation from William Hagerstrom (\cite{Hagerstrom2025}), which can be proven using H\"older's inequality. 

\begin{lemma} \label{lemma:vershynintrick} Suppose that for $h: {\mathbb Z}_N^d \to {\mathbb C}$, 
\begin{equation} \label{eq:bourgainproxy} {\left( \frac{1}{N} \sum_{x \in {\mathbb Z}_N^d} {|\widehat{h}(x)|}^q \right)}^{\frac{1}{q}} \leq C(q) {\left( \frac{1}{N} \sum_{x \in {\mathbb Z}_N} {|\widehat{h}(x)|}^2 \right)}^{\frac{1}{2}} \end{equation} for some $q>2$. 

Then 
$$ {\left( \frac{1}{N} \sum_{x \in {\mathbb Z}_N} {|\widehat{h}(x)|}^2 \right)}^{\frac{1}{2}} \leq {(C(q))}^{\frac{q}{q-2}} \cdot \frac{1}{N} \sum_{x \in {\mathbb Z}_N} |\widehat{h}(x)|. $$
\end{lemma}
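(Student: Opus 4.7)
The plan is to derive the stated reverse inequality by a single Hölder-interpolation step applied to $\widehat{h}$, taken in the probability-normalized $L^p(\mu)$ norms. Since the hypothesis and conclusion both live on $\mathbb{Z}_N$ with the measure $\mu$ (total mass $1$), all $L^p(\mu)$ norms are log-convex in $1/p$, so we have for $1<2<q$ the standard three-line interpolation
\[
\|\widehat{h}\|_{L^2(\mu)} \ \le\ \|\widehat{h}\|_{L^1(\mu)}^{\theta}\,\|\widehat{h}\|_{L^q(\mu)}^{1-\theta},
\]
where $\theta$ is determined by $\tfrac{1}{2}=\tfrac{\theta}{1}+\tfrac{1-\theta}{q}$. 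Solving gives $\theta=\tfrac{q-2}{2(q-1)}$ and $1-\theta=\tfrac{q}{2(q-1)}$.

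Next I would substitute the hypothesis (\ref{eq:bourgainproxy}) into the $L^q(\mu)$ factor, yielding
\[
\|\widehat{h}\|_{L^2(\mu)}\ \le\ C(q)^{\,q/(2(q-1))}\,\|\widehat{h}\|_{L^1(\mu)}^{(q-2)/(2(q-1))}\,\|\widehat{h}\|_{L^2(\mu)}^{\,q/(2(q-1))}.
\]
The key observation is that the exponent $\tfrac{q}{2(q-1)}$ on the right is strictly less than $1$ (because $q>2$), so one can absorb the $L^2(\mu)$ factor on the left. Dividing through and using the identity $1-\tfrac{q}{2(q-1)}=\tfrac{q-2}{2(q-1)}$, I would obtain
\[
\|\widehat{h}\|_{L^2(\mu)}^{(q-2)/(2(q-1))}\ \le\ C(q)^{\,q/(2(q-1))}\,\|\widehat{h}\|_{L^1(\mu)}^{(q-2)/(2(q-1))},
\]
and then raise both sides to the power $2(q-1)/(q-2)$, which turns the $L^1(\mu)$ exponent into $1$ and the constant into $C(q)^{q/(q-2)}$, matching the claim exactly.

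There is no real obstacle here: the only thing to watch is the $\theta$-algebra (so that the final exponent on $C(q)$ comes out to $\tfrac{q}{q-2}$ rather than, say, $\tfrac{q}{q-1}$), and the mild point that one must work throughout with the probability normalization $L^p(\mu)$ — if one slipped into the counting norms $\|\cdot\|_p$, the Hölder interpolation would pick up stray factors of $N$. Once these bookkeeping issues are handled, the lemma follows from one line of Hölder plus a rearrangement, which is exactly why Hagerstrom's observation is best stated as a trick: it converts any Bourgain-type $L^q$-$L^2$ comparison into the $L^2$-$L^1$ reverse inequality that $\mathrm{FR}(h)$ is built to measure.
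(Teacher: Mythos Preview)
Your argument is correct and is precisely the H\"older-interpolation route the paper indicates (the paper does not spell out a proof, merely noting the lemma ``can be proven using H\"older's inequality''). The computation of $\theta=\tfrac{q-2}{2(q-1)}$ and the subsequent absorption step are handled correctly, and the final exponent $C(q)^{q/(q-2)}$ is right.
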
 

\begin{remark} \label{rmk:nolog} It follows that if the expected size of $M$ in Theorem \ref{thm: combo} is $O(N^{1-\varepsilon})$ for some $\varepsilon>0$, then 
$C_T \sqrt{\log(N) \log \log(N)}$ in (\ref{eq: talagrand1}) can be replaced by $C'_T$, independent of $N$. \end{remark} 

\vskip.125in 

\subsection{The Fourier ratio makes its first appearance} \label{subsection:firstappearance} We are now ready to exploit the setup above. Let $S \subset {\mathbb Z}_N$, chosen randomly. Then in view of Theorem \ref{thm: combo} and Remark \ref{rmk:nolog}, with very high probability, we have 
\begin{equation} \label{eq:mamaindicator} {\|\widecheck{1}_S\|}_{L^2(\mu)} \leq C_T {\|\widecheck{1}_S\|}_{L^1(\mu)}.\end{equation}  

Using Plancherel, we see that 
$$ {\|\widecheck{1}_S\|}_{L^1(\mu)} \ge \frac{1}{C_T} \sqrt{\frac{|S|}{N}},$$ or, in other words, 
$$ \frac{{\|\widecheck{1}_S\|}_{L^1(\mu)}}{{\|\widecheck{1}_S\|}_{L^2(\mu)}} \ge \frac{1}{C_T},$$ the largest possible size up to a constant. It is not difficult to see that the same conclusion holds if $\widecheck{1}_S$ is replaced by an arbitrary function with the Fourier transform supported in $S$.  

On the other hand, let's consider the case where $S$ is far from random. Suppose that $S={\mathbb Z}_N$. Then 
$$ \widecheck{1}_S(x)=\sqrt{N} \cdot \delta_0(x),$$ where $\delta_0(x)=1$ if $x=0$ and $0$ otherwise. it is not difficult to check that in this case, 
\begin{equation} \label{eq:worstcase} \frac{{\|\widecheck{1}_S\|}_{L^1(\mu)}}{{\|\widecheck{1}_S\|}_{L^2(\mu)}}=\frac{1}{\sqrt{N}}, \end{equation} a much smaller quantity than what we get in (\ref{eq:mamaindicator}) in the generic case. This suggests that the quantity (\ref{eq:mama}) in the case $f=\widecheck{1}_S$ may contain significant information indicating the degree to which a set $S$ is random. 

\vskip.125in 

In the remainder of this paper, we explore to what extent the Fourier Ratio captures the degree to which the signal $f$ is learnable, in a variety of different senses. We also apply the resulting ideas to the imputation of time series with missing values. 

\vskip.125in 

\subsection{Bounding the Fourier ratio from above and below.} \label{subsection:aboveandbelow}

Note that by Fourier inversion and the triangle inequality,
$$|f(x)| \leq N^{-\frac12} \sum_{m\in\mathbb Z_N} |\widehat f(m)| = N^{-\frac12} \|\widehat f\|_1.$$
Squaring both sides, summing over $x\in\mathbb Z_N$, and taking square roots yields
$$\|f\|_2 \leq \|\widehat f\|_1,$$
so by Plancherel, we see that
$$\text{FR}(f) = \frac{\|\widehat f\|_1}{\| f\|_2} \geq 1,$$
and we saw in the example described above (\ref{eq:worstcase}) that this lower bound is realized by the constant function $1$. Note also that by Cauchy-Schwarz, we have that
$$\text{FR}(f) = \frac{\|\widehat f\|_1}{\|\widehat f\|_2} \leq \frac{\sqrt N \|\widehat f\|_2}{\|\widehat f\|_2} = \sqrt N.$$
Thus we have just seen that
$$1 \leq \text{FR}(f) \leq \sqrt N.$$

% Suppose that ${\|\widehat{f}\|}_{L^1(\mu)} \leq \varepsilon {\|\widehat{f}\|}_{L^2(\mu)}$. By the triangle inequality, 
% $$ |f(x)| \leq N^{-\frac{1}{2}} \sum_m |\widehat{f}(m)|=N^{\frac{1}{2}} \cdot {\|\widehat{f}\|}_{L^1(\mu)}$$ 
% $$ \leq \sqrt{N} \cdot \varepsilon \cdot {\|\widehat{f}\|}_{L^2(\mu)}.$$

% Squaring both sides, summing over ${\mathbb Z}_N$ and taking square roots yields 
% \begin{equation} \label{eq:epsilonlowerbound} \varepsilon \ge \frac{1}{\sqrt{N}}, \end{equation} and we saw in the example described above (\ref{eq:worstcase}) that this bound is realized by the constant function $1$. It follows that it makes sense to consider the quantity 
% \begin{equation} \label{eq:theratio} \text{FR}(f)=\sqrt{N} \cdot \frac{{\|\widehat{f}\|}_{L^1(\mu)}}{{\|\widehat{f}\|}_{L^2(\mu)}}=\frac{{\|\widehat{f}\|}_1}{{\|\widehat{f}\|}_2}, \end{equation} and we have just seen that 
% $$ 1 \leq \text{FR}(f) \leq \sqrt{N},$$ where the upper bound follows by Cauchy-Schwarz. 

\vskip.125in 

The same argument used to bound $\text{FR}(f)$ below shows that if $f$ is supported in $E \subset {\mathbb Z}_N$, then 
\begin{equation} \label{eq:epsilonsupportlowerbound} \text{FR}(f) \ge \frac{\sqrt{N}}{\sqrt{|E|}}. \end{equation} 

To see that (\ref{eq:epsilonsupportlowerbound}) can be realized, let $E=q{\mathbb Z}_p$, $p$ prime, be a copy of ${\mathbb Z}_p$ sitting inside ${\mathbb Z}_{pq}$ in the obvious way. Then, for the setting $N=pq$, we have 
$$ \widehat{1}_E(m)=\frac{1}{\sqrt{N}} \sum_{k=0}^{p-1} e^{-\frac{2 \pi i qkm}{N}}  = %= \frac{p}{\sqrt{N}} 1_S(m),
\begin{cases}
\frac{p}{\sqrt{N}} 1_S(m), & \text{if } p \mid m,\\[4pt]
0, & \text{if } p \nmid m.
\end{cases}
$$ where $S=p{\mathbb Z}_q$, sitting inside ${\mathbb Z}_{pq}$ in the natural way. Setting $f=1_E$, it follows that 
\begin{equation} \label{eq:frforzp} \text{FR}(f)=\sqrt{N} \cdot \frac{{\|\widehat{f}\|}_{L^1(\mu)}}{{\|\widehat{f}\|}_{L^2(\mu)}}=\frac{\sqrt{N}}{\sqrt{p}}=\frac{\sqrt{N}}{\sqrt{|E|}}. \end{equation} 

\vskip.25in 

\subsection{Refined lower and upper bounds on the Fourier Ratio and the classical Fourier Uncertainty Principle} \label{subsection:FRcontrollingparameter} 

We now refine both the lower and the upper bounds for the Fourier Ratio and demonstrate that the Fourier Ratio is a natural controlling parameter in the classical Fourier Uncertainty Principle. 

\begin{theorem} \label{theorem:FRuncertainty} Let $f: {\mathbb Z}_N \to {\mathbb C}$, $L^2$-concentrated in $E \subset {\mathbb Z}_N$ at level $a \in (0,1)$, in the sense that 
$$ {||f||}_{L^2(E^c)} \leq a {||f||}_{L^2({\mathbb Z}_N)},$$ with $\widehat{f}$
$L^1$-
concentrated
on $S \subset {\mathbb Z}_N$ at level $b\in (0,1)$, in the sense that 
% $$ {||\widehat{f}||}_{L^2(S^c)} \leq b{||\widehat{f}||}_{L^2({\mathbb Z}_N)}.$$ 
$$ {||\widehat{f}||}_{L^1(S^c)} \leq b{||\widehat{f}||}_{L^1({\mathbb Z}_N)}.$$ 

Then 
\begin{equation}\label{eq:FRcontrollingparameter}  (1-a)^2  \cdot \frac{N}{|E|} \leq {\text{FR}(f)}^2 \leq \frac{|S|}{{(1-b)}^2}. \end{equation} 

\end{theorem}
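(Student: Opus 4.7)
The plan is to prove the two bounds in (\ref{eq:FRcontrollingparameter}) independently, both via classical and self-contained arguments. The upper bound is a Cauchy--Schwarz calculation on the distinguished frequency set $S$, while the lower bound combines the pointwise bound $|f(x)|\le N^{-1/2}\|\widehat f\|_1$ (established in Subsection \ref{subsection:aboveandbelow}) with Plancherel and the $L^2$-concentration hypothesis.

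For the upper bound, I would first split $\|\widehat f\|_1=\|\widehat f\|_{\ell^1(S)}+\|\widehat f\|_{\ell^1(S^c)}$ and use the hypothesis $\|\widehat f\|_{L^1(S^c)}\le b\|\widehat f\|_{L^1(\mathbb Z_N)}$ (valid for either normalization, since both sides involve the same constant factor) to obtain $\|\widehat f\|_{\ell^1(S)}\ge (1-b)\|\widehat f\|_1$. A direct application of Cauchy--Schwarz on the finite set $S$ then yields
\[
(1-b)\|\widehat f\|_1 \;\le\; \|\widehat f\|_{\ell^1(S)} \;\le\; \sqrt{|S|}\,\|\widehat f\|_{\ell^2(S)} \;\le\; \sqrt{|S|}\,\|\widehat f\|_2,
\]
and dividing by $(1-b)\|\widehat f\|_2$ and squaring gives $\text{FR}(f)^2\le |S|/(1-b)^2$.

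For the lower bound, I would start from the pointwise bound $|f(x)|\le N^{-1/2}\|\widehat f\|_1$ derived from Fourier inversion and the triangle inequality. Squaring this and summing over $x\in E$ yields
\[
\|f\|_{\ell^2(E)}^{2} \;\le\; \frac{|E|}{N}\,\|\widehat f\|_1^{\,2}.
\]
Next, I would convert the $L^2$-concentration hypothesis into a lower bound on $\|f\|_{\ell^2(E)}$: the triangle inequality $\|f\|_2 \le \|f\cdot 1_E\|_2 + \|f\cdot 1_{E^c}\|_2$ together with $\|f\cdot 1_{E^c}\|_2\le a\|f\|_2$ gives $\|f\|_{\ell^2(E)}\ge (1-a)\|f\|_2$. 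Combining these two inequalities and invoking Plancherel ($\|f\|_2=\|\widehat f\|_2$) produces $(1-a)^2\|\widehat f\|_2^{\,2}\le (|E|/N)\|\widehat f\|_1^{\,2}$, which rearranges exactly to $\text{FR}(f)^2 \ge (1-a)^2\cdot N/|E|$.

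I do not anticipate any substantive obstacle here; both halves are short applications of standard tools. The only point requiring care is bookkeeping of the normalization, since $\|\cdot\|_{L^p(\mu)}$ and $\|\cdot\|_p$ differ by an $N^{-1/p}$ factor, but the hypotheses $\|f\|_{L^2(E^c)}\le a\|f\|_{L^2(\mathbb Z_N)}$ and $\|\widehat f\|_{L^1(S^c)}\le b\|\widehat f\|_{L^1(\mathbb Z_N)}$ involve the same normalization on both sides, so the constants $a,b$ are preserved when converting to the $\ell^p$ formulation used in the definition of $\text{FR}(f)$.
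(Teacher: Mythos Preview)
Your proposal is correct and follows essentially the same argument as the paper: both halves use exactly the ingredients you identify (Cauchy--Schwarz on $S$ for the upper bound, the pointwise Fourier-inversion bound combined with the triangle-inequality step $\|f\|_{\ell^2(E)}\ge(1-a)\|f\|_2$ for the lower bound), with only cosmetic differences in the order of the algebra.
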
 

\vskip.125in 

In particular, 
\begin{equation} \label{eq:FRintermediary} |E| \cdot |S| \ge {(1-a)}^2 \cdot {(1-b)}^2 \cdot N, \end{equation} a version of the classical uncertainty principle (see e.g. \cite{DS89}). While (\ref{eq:FRintermediary}) is well-known, (\ref{eq:FRcontrollingparameter}) shows that the Fourier Ratio is a natural controlling parameter in the Fourier Uncertainty Principle.  

\subsubsection{$L^1$ versus $L^2$ concentration} It is important to note that the conclusion of Theorem $\ref{theorem:FRuncertainty}$ does not hold if we replace the hypothesis that $\widehat{f}$ is $L^1$-concentrated on $S$ at level $b$ with the hypothesis that $\widehat{f}$ is $L^2$-concentrated on $S$ at level $b$. This is shown by the following example: for any $S\subset \mathbb{Z}_N$, let
\[\widehat{f}(m) = \begin{cases} 
\sqrt{1-b^2}/|S|^{1/2}& m \in S \\
b/(N-|S|)^{1/2}&m \notin S
\end{cases}.\]
It is easy to verify that $\widehat{f}$ is $L^2$-concentrated in $S$ at a level $b$, since $||\widehat{f}||_{L^2(S^C)} = b$ and $||\widehat{f}||_{2} = 1$.
The Fourier ratio is
$$\mathrm{FR}(f) = b\cdot(N-|S|)^{1/2} + \sqrt{1-b^2}\cdot |S|^{1/2}.$$ 
We now claim that $\mathrm{FR}(f)^2 \geq |S|/(1-b)^2$ if and only if
\[|S| \leq \frac{b^2(1-b)^2}{(1-(1-b)\sqrt{1-b^2})^2} \cdot (N-|S|).\]
Indeed, $\mathrm{FR}(f) \geq |S|^{1/2}/(1-b)$ is equivalent to the statement
\[(1-b)\cdot b\cdot \sqrt{(N-|S|)} \geq (1-(1-b)\sqrt{1-b^2}) \cdot \sqrt{|S|}, \]
and the claim follows.

\vskip.125in 

\noindent We now give a result directly linking the $L^1$ and $L^2$ concentrations, illustrating that the relationship in the example above is extremal up to a logarithmic factor. 

\begin{theorem} \label{thm:relation-between-L1-L2-concentration}

Suppose that $f:\mathbb{Z}_N \rightarrow \mathbb{C}$ is an indicator function such that $\widehat{f}$ is $L^2$-concentrated on a set $S$ with accuracy $b$, i.e.
\[||\widehat{f}||_{L^2(S^C)} \leq b \cdot ||\widehat{f}||_2.\]
We will also assume that $S$ is minimal with respect to the $L^2$-concentration property in the following sense: for any set $T \subset S$, we assume that
\[||\widehat{f}||^2_{L^2(T)} < (1-b^2)\cdot ||\widehat{f}||^2_{2} \cdot \frac{|T|}{|S|}.\]
%since if this is not true, we can replace $S$ with $T$ and $(1-b^2)$ with $(1-b^2)\cdot |T|/|S|$. 
Then 
\[\frac{||\widehat{f}||_{L^1(S)}}{||\widehat{f}||_{L^2(S)}} \geq \frac{2}{(\log(N))^{3/2}}\cdot \frac{1}{\sqrt{1-b^2}}\cdot \frac{1}{\left(1+(\log N)^{3/2}\cdot\frac{b \cdot |S^C|^{1/2}}{\sqrt{1-b^2}\cdot |S|^{1/2}}\right)} \cdot \text{FR}(f).\]
Consequently (since the fact that $f$ is $L^2$-concentrated on $S$ tells us that $||\widehat{f}||_2 \leq \frac{1}{\sqrt{1-b^2}}\cdot ||\widehat{f}||_{L^2(S)}$), we see that
$\widehat{f}$ is $L^1$-concentrated on $S$ with the following accuracy:
\[||\widehat{f}||_{L^1(S)} \geq \frac{\sqrt{2}}{(\log(N))^{3/2}}\cdot\frac{1}{\left(1+(\log N)^{3/2}\cdot\frac{b \cdot |S^C|^{1/2}}{\sqrt{1-b^2}\cdot |S|^{1/2}}\right)} \cdot ||\widehat{f}||_1.\]
\end{theorem}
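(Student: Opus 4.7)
The plan is to prove the first displayed inequality (the $L^1$-concentration statement then follows by rearrangement using $\|\widehat f\|_2\le \|\widehat f\|_{L^2(S)}/\sqrt{1-b^2}$ and $\|\widehat f\|_1=\mathrm{FR}(f)\|\widehat f\|_2$). The overall strategy is to bound $\|\widehat f\|_1$ from above in terms of $\|\widehat f\|_{L^1(S)}$ and $\|\widehat f\|_{L^2(S)}$ via three ingredients: (a) the split $\|\widehat f\|_1=\|\widehat f\|_{L^1(S)}+\|\widehat f\|_{L^1(S^C)}$, (b) a Cauchy--Schwarz/$L^2$-concentration bound on the $S^C$-tail, and (c) a Talagrand-style reverse-H\"older inequality comparing $\|\widehat f\|_{L^2(S)}$ to $\|\widehat f\|_{L^1(S)}$.

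For (b), Cauchy--Schwarz on $S^C$ and the hypothesis $\|\widehat f\|_{L^2(S^C)}\le b\|\widehat f\|_2$, combined with $\|\widehat f\|_2\le \|\widehat f\|_{L^2(S)}/\sqrt{1-b^2}$ (immediate from $\|\widehat f\|_2^2=\|\widehat f\|_{L^2(S)}^2+\|\widehat f\|_{L^2(S^C)}^2$), yield
\[
\|\widehat f\|_{L^1(S^C)} \;\le\; \frac{b|S^C|^{1/2}}{\sqrt{1-b^2}}\,\|\widehat f\|_{L^2(S)}.
\]
Once the $S$-part is controlled with a $(\log N)^{3/2}$-loss (as in step (c) below), this term produces the corrective factor $1+(\log N)^{3/2}\,b|S^C|^{1/2}/(\sqrt{1-b^2}|S|^{1/2})$ appearing in the denominator of the stated bound.

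The crux is step (c): the reverse-H\"older estimate $\|\widehat f\|_{L^2(S)}\lesssim (\log N)^{3/2}\,\|\widehat f\|_{L^1(S)}/|S|^{1/2}$. I would obtain this by dyadically decomposing $|\widehat f|\cdot 1_S$ into $O(\log N)$ level sets $S_k=\{m\in S:\,2^k<|\widehat f(m)|\le 2^{k+1}\}$, selecting a dominant level $S_{k^*}$ by pigeonhole (on which $|\widehat f|$ is constant up to a factor of two, so $\|\widehat f\|_{L^1(S_{k^*})}/\|\widehat f\|_{L^2(S_{k^*})}\approx |S_{k^*}|^{1/2}$), and then applying Lemma~\ref{lemma:vershynintrick}, which converts Bourgain's $L^q$-to-$L^2$ bound from Theorem~\ref{thm:bourgain} into the $L^2$-to-$L^1$ comparison needed to transport the estimate from $S_{k^*}$ to all of $S$. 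The minimality hypothesis on $S$ prevents the dominant level from collapsing to a tiny ``spike'': it forbids any proper $T\subsetneq S$ from carrying a disproportionate share of the $L^2$-mass, forcing $|S_{k^*}|\gtrsim |S|/\log N$. Combining the $O(\log N)$-loss from this level-size bound with the $\sqrt{\log N}$-loss from the Bourgain/Talagrand step yields the overall $(\log N)^{3/2}$-factor.

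The main obstacle I expect is converting the minimality hypothesis into the quantitative lower bound on $|S_{k^*}|$ used above. Read literally, the minimality inequality is strict on every proper subset and interacts subtly with the $L^2$-concentration lower bound $\|\widehat f\|_{L^2(S)}^2\ge (1-b^2)\|\widehat f\|_2^2$; one has to argue via a greedy/peeling procedure that tracks how much $L^2$-mass a single dyadic level can carry without violating the balance condition. A secondary technical issue is that Theorem~\ref{thm: combo} is phrased for functions supported on a random generic subset while $S$ here is arbitrary; this is sidestepped by invoking Lemma~\ref{lemma:vershynintrick} in place of Theorem~\ref{thm: combo}, as it requires only an $L^q$-to-$L^2$ comparison and returns the $L^2$-to-$L^1$ comparison, with the constant $C(q)^{q/(q-2)}$ absorbed into the implicit constant.
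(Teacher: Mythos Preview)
Your steps (a) and (b) are correct and agree with the paper. The gap is in step (c).

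You propose to obtain the reverse H\"older inequality on $S$ by invoking Lemma~\ref{lemma:vershynintrick}, fed by Bourgain's Theorem~\ref{thm:bourgain}. But Theorem~\ref{thm:bourgain} gives an $L^q$-to-$L^2$ bound only for functions supported on a \emph{generic} (random) set; here $S$ is an arbitrary deterministic set, and there is no $L^q$-to-$L^2$ inequality available for $\widehat f\cdot 1_S$ to feed into Lemma~\ref{lemma:vershynintrick}. Your ``sidestep'' does not remove this obstacle: Lemma~\ref{lemma:vershynintrick} is a conditional statement, and its hypothesis simply fails in this setting. Consequently your accounting for the $(\log N)^{3/2}$ factor (``$\sqrt{\log N}$ from the Bourgain/Talagrand step'') is also incorrect.

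The paper's proof is entirely elementary and uses neither Talagrand nor Bourgain. The indicator hypothesis gives $|\widehat f(m)|\le\sqrt N$, so one dyadically decomposes the full range of $|\widehat f|$ into $O(\log N)$ level sets $E_k$ (this is where the indicator assumption is used, and you do not justify why there are only $O(\log N)$ levels). Three separate pigeonhole arguments then select dominant scales: an index $i$ controlling $\|\widehat f\|_2$, an index $j$ controlling $\|\widehat f\|_{L^1(S)}$, and an index $\ell$ controlling $\|\widehat f\|_{L^1(S^C)}$. All logarithmic losses in the statement arise from these pigeonhole steps, not from any Talagrand-type inequality. The minimality hypothesis is used in a separate lemma to show that the $L^2$-dominant scale $i$ can be chosen \emph{concentrated in $S$} and with $|E_i\cap S|\ge |S|/\log N$; this is more delicate than the single ``$|S_{k^*}|\gtrsim |S|/\log N$'' claim you sketch, because the concentration-in-$S$ property and the size lower bound must be obtained simultaneously, and the argument uses minimality twice (once via a variance-splitting argument over the concentrated versus non-concentrated scales, and once to rule out a small dominant level).
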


\subsection{Two key examples} \label{subsection:keyexample}
In this subsection we give two somewhat counter-intuitive examples that illustrate the subtleties of the Fourier ratio.

\subsubsection{Small subgroups of ${\mathbb Z}_{pq}$}
The example given above where $N=pq$, $p,q$ distinct primes, and $f=1_E$, where $E$ denotes ${\mathbb Z}_p$ embedded in ${\mathbb Z}_{pq}$ in a natural way, points to a very interesting phenomenon that we need to address.

As we noted above, $\text{FR}(f)$ in this case is equal to $\sqrt{\frac{N}{p}}$. If $p$ is much larger than $q$, then $\text{FR}(f)$ is small, which is consistent with our philosophy that the size of $\text{FR}(f)$ indicates the degree of randomness of the signal. To this point, the indicator of ${\mathbb Z}_p$ inside ${\mathbb Z}_{pq}$ has low complexity in any reasonable sense. However, if $p$ is much smaller than $q$, then $\text{FR}(f)$ is rather large, and the paradigm appears to break down.

The key observation here is that while $\text{FR}(f)=\sqrt{\frac{N}{p}}$, we have $\text{FR}(\widehat{f})=\sqrt{\frac{N}{q}}$. Why is $\text{FR}(\widehat{f})$ relevant? Simply because we can go from $f$ to $\widehat{f}$ by applying the Fourier matrix, and go back by applying the inverse Fourier matrix. Since the ${\mathbb Z}_N$ Fourier transform (or inverse Fourier transform) runtime is $\approx N \log N$ (see e.g., \cite{Bluestein1970, CooleyTukey1965, Good1958, Rader1968}), passing between $f$ and $\widehat{f}$ has relatively low computational cost, so low complexity of $f$ and low complexity of $\widehat{f}$ should be regarded as essentially equivalent from an algorithmic point of view.

In this way, the quantity
\begin{equation} \label{eq:twosidedratio}
  \text{FR}_{\mathrm{bi}}(f)
  = \min \left\{ \text{FR}(f), \text{FR}(\widehat{f}) \right\}
\end{equation}
may be better at capturing the complexity of a given signal in view of the discussion above.

\vskip.125in

\subsubsection{Inverse Fourier transform of a random set}
Let $S$ be a (uniformly) randomly chosen subset of $\mathbb{Z}_N$ of a given size, and let $f$ denote the inverse Fourier transform of $1_S$. Then
\[
  \text{FR}(f)=\frac{|S|}{\sqrt{|S|}}=\sqrt{|S|}.
\]

If $|S|$ is small, $\text{FR}(f)$ is small. This is completely consistent with Theorem~\ref{thm:L2polynomialapprox} and the other results in Subsection~\ref{subsection:main}, which say that a small Fourier ratio implies that the signal can be well-approximated by a trigonometric polynomial of small degree. It is also consistent with Theorem~\ref{theorem:alg}, which shows that we can effectively impute the values of such a signal from a small number of samples.

At first sight, this may seem counterintuitive, since for a typical random choice of $S$ the resulting time-domain signal $f$ has no obvious structure. However, from the point of view of compressed sensing, this is one of the simplest situations covered by Theorem~\ref{theorem:alg}. Indeed, the Fourier transform of $f$ is exactly sparse with random support, which is a very favorable configuration for recovery from few samples. Forecasting such a signal into the future is a completely different matter, and requires additional structural assumptions beyond small Fourier ratio. We shall return to this issue in the sequel.

\vskip.125in

\subsection{Approximation by low degree polynomials}
\label{subsection:main}

Our first main result amplifies the idea described in Subsection \ref{subsection:firstappearance}, which indicates that in the presence of randomness, the Fourier Ratio is large. Indeed, we will see that if the signal is even concentrated in a random set, then the ratio $\text{FR}(f)$ is suitably large. 

\begin{theorem} \label{thm:concentration} Let $f: {\mathbb Z}_N \to {\mathbb C}$. Suppose that there exists a generic set $M$ such that
$$ {\|f\|}_{L^2(M^c)} \leq r {\|f\|}_2$$ for some $r \in (0,1)$, with $|M| \leq \gamma_0 \frac{N}{\log(N)}$, where $\gamma_0$ is as in Theorem \ref{thm: combo}. Suppose that 
\begin{equation} \label{eq:concentrationrestriction} r< \frac{1-r}{C_T \sqrt{\log(N) \log \log(N)}}. \end{equation}

Then 
\begin{equation} \label{eq:largeconcentrationratio} \text{FR}(f) \ge \sqrt{N} \cdot \frac{1-r\frac{C_T \sqrt{\log(N) \log \log(N)}}{1-r}}{\frac{C_T \sqrt{\log(N) \log \log(N)}}{1-r}}\end{equation} with probability $1-o_N(1)$. 
\end{theorem}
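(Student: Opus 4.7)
\textbf{Proof plan for Theorem \ref{thm:concentration}.} The plan is to split $f$ into its restriction to the generic set $M$ and the small tail outside, apply Theorem \ref{thm: combo} to the restriction, and control the $L^1$ error coming from the tail via a trivial Cauchy--Schwarz bound. Concretely, set
\[
g := f\cdot 1_M,\qquad e := f-g = f\cdot 1_{M^c},
\]
so that $g$ is supported in the generic set $M$ with $|M|\le \gamma_0 N/\log N$, and, by hypothesis, $\|e\|_2 = \|f\|_{L^2(M^c)}\le r\|f\|_2$. By Plancherel this transfers to the Fourier side as $\|\widehat e\|_2\le r\|\widehat f\|_2$, and the triangle inequality then gives the two key estimates
\[
\|\widehat g\|_2 \ge (1-r)\,\|\widehat f\|_2,\qquad
\|\widehat f\|_1 \ge \|\widehat g\|_1 - \|\widehat e\|_1.
\]

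Next I would apply Theorem \ref{thm: combo} to $h = g$: with probability $1-o_N(1)$ over the choice of $M$, this yields
\[
\|\widehat g\|_2 \le C_T\sqrt{\log N\,\log\log N}\cdot \frac{1}{\sqrt N}\,\|\widehat g\|_1,
\]
after converting from the $L^p(\mu)$ normalization to the $\ell^p$ normalization. Combining with the lower bound on $\|\widehat g\|_2$,
\[
\|\widehat g\|_1 \;\ge\; \frac{\sqrt N}{C_T\sqrt{\log N\,\log\log N}}\,\|\widehat g\|_2
\;\ge\; \frac{(1-r)\sqrt N}{C_T\sqrt{\log N\,\log\log N}}\,\|\widehat f\|_2.
\]
For the error term, since $\widehat e$ lives on $\mathbb Z_N$ (ambient dimension $N$), Cauchy--Schwarz gives the cheap but crucial inequality $\|\widehat e\|_1 \le \sqrt N\,\|\widehat e\|_2 \le r\sqrt N\,\|\widehat f\|_2$.

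Assembling the two,
\[
\|\widehat f\|_1 \;\ge\; \sqrt N\,\|\widehat f\|_2\left[\frac{1-r}{C_T\sqrt{\log N\,\log\log N}} - r\right],
\]
and dividing by $\|\widehat f\|_2$ produces the claimed lower bound after the algebraic rearrangement
\[
\frac{1-r}{C_T\sqrt{\log N\,\log\log N}} - r \;=\; \frac{1 - r\,\frac{C_T\sqrt{\log N\,\log\log N}}{1-r}}{\frac{C_T\sqrt{\log N\,\log\log N}}{1-r}},
\]
which matches (\ref{eq:largeconcentrationratio}). The hypothesis (\ref{eq:concentrationrestriction}) is precisely what is needed to make the bracket strictly positive, so the bound is nontrivial.

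The only subtlety I foresee is the step where the $L^1$ tail is controlled by Cauchy--Schwarz: this loses a full $\sqrt N$ and would be disastrous if $r$ were comparable to $(\log N\,\log\log N)^{-1/2}$ rather than strictly smaller, which is exactly why the technical hypothesis (\ref{eq:concentrationrestriction}) is imposed. Otherwise the argument is a clean ``restrict--Talagrand--error'' sandwich, with Theorem \ref{thm: combo} doing all the work and Plancherel plus Cauchy--Schwarz handling the bookkeeping.
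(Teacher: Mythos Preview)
Your proof is correct and follows essentially the same approach as the paper: the same decomposition $f = f\cdot 1_M + f\cdot 1_{M^c}$, the same application of Theorem \ref{thm: combo} to the piece supported on $M$, and the same Cauchy--Schwarz bound $\|\widehat e\|_1 \le \sqrt{N}\,\|\widehat e\|_2 \le r\sqrt{N}\,\|\widehat f\|_2$ on the tail. The only cosmetic difference is that the paper runs the inequalities starting from $\|\widehat f\|_2$ and bounding $\|\widehat{1_M f}\|_1$ from above by $\|\widehat f\|_1 + r\sqrt{N}\,\|\widehat f\|_2$, whereas you start from $\|\widehat f\|_1 \ge \|\widehat g\|_1 - \|\widehat e\|_1$; the algebra is identical.
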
 

\vskip.125in 

\begin{remark} Using the observation in Remark \ref{rmk:nolog}, if $|M| \leq N^{1-\varepsilon}$ for some $\varepsilon>0$, we can replace (\ref{eq:concentrationrestriction}) with 
$$ r<\frac{1-r}{C_T},$$ and we can replace (\ref{eq:largeconcentrationratio}) with 
$$ \text{FR}(f) \ge \sqrt{N} \cdot \frac{\left(1-r\frac{C_T }{1-r} \right)}{\frac{C_T }{1-r}}.$$

\end{remark} 

\vskip.125in 

Our next result concerns $L^\infty$ approximations and it shows that if the corresponding ratio $\frac{\|\widehat f\|_{L^1(\mu)}}{\|f\|_\infty}$ is small, then we can approximate $f$ in $L^\infty$ norm by a trigonometric polynomial with degree logarithmic in $N$. 

\begin{theorem}\label{thm:Linftypolynomialapprox}
    Let $f:\mathbb Z_N\to\mathbb C$ and let $\eta>0$. Then for any integer $k$ such that
    $$k > 8\left(\frac{\|\widehat f\|_{L^1(\mu)}}{\|f\|_\infty}\right)^2 \frac{N\log (4N)}{\eta^2},$$
    there exists a trigonometric polynomial
    $$P(x) = \sum_{i=1}^k c_i \chi(m_ix)$$
    such that
    $$\|f - P\|_\infty < \eta \|f\|_\infty.$$
\end{theorem}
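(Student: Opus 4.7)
The plan is to use Maurey's empirical method (a.k.a.\ random Fourier sampling): write $f$ as an expectation over a single random frequency drawn from the $|\widehat{f}|$-weighted distribution, and then take $P$ to be the empirical mean of $k$ i.i.d.\ copies, which is automatically a trigonometric polynomial of the desired form. Hoeffding's inequality together with a union bound over $x \in \mathbb{Z}_N$ will do the rest.

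Starting from Fourier inversion $f(x) = \frac{1}{\sqrt{N}} \sum_{m \in \mathbb{Z}_N} \widehat{f}(m)\,\chi(mx)$, and assuming without loss of generality that $\widehat{f}\not\equiv 0$, I would define the probability measure $p(m) := |\widehat{f}(m)|/\|\widehat{f}\|_1$ on $\mathbb{Z}_N$, the phase $\sigma(m) := \widehat{f}(m)/|\widehat{f}(m)|$ (with $\sigma(m)=0$ when $\widehat{f}(m)=0$), and the random variable
$$Y_m(x) := \frac{\|\widehat{f}\|_1}{\sqrt{N}}\,\sigma(m)\,\chi(mx).$$
A direct computation gives $\mathbb{E}_{m \sim p}[Y_m(x)] = f(x)$ for every $x \in \mathbb{Z}_N$, and the deterministic bound $|Y_m(x)| \leq M := \|\widehat{f}\|_1/\sqrt{N}$.

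Next, I would draw $m_1,\ldots,m_k$ i.i.d.\ from $p$ and set
$$P(x) := \frac{1}{k}\sum_{i=1}^k Y_{m_i}(x) = \frac{\|\widehat{f}\|_1}{k\sqrt{N}} \sum_{i=1}^k \sigma(m_i)\,\chi(m_i x),$$
which is a trigonometric polynomial with at most $k$ distinct frequencies. For each fixed $x$, the $Y_{m_i}(x)$ are i.i.d.\ complex random variables bounded by $M$ with mean $f(x)$. Splitting into real and imaginary parts, applying the classical Hoeffding inequality to each component (each lies in an interval of length $2M$), and using $|z| \leq |\mathrm{Re}(z)| + |\mathrm{Im}(z)|$ to split the threshold into halves, I obtain
$$\Pr\!\bigl[|P(x) - f(x)| > \eta \|f\|_\infty\bigr] \leq 4\exp\!\Bigl(-\frac{k N \eta^2 \|f\|_\infty^2}{8\|\widehat{f}\|_1^2}\Bigr).$$

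The final step is a union bound over the $N$ points $x \in \mathbb{Z}_N$, which gives
$$\Pr\!\bigl[\|P - f\|_\infty > \eta \|f\|_\infty\bigr] \leq 4N\exp\!\Bigl(-\frac{k N \eta^2 \|f\|_\infty^2}{8\|\widehat{f}\|_1^2}\Bigr).$$
Using $\|\widehat{f}\|_{L^1(\mu)} = \|\widehat{f}\|_1/N$, the hypothesis on $k$ rearranges to exactly $k N \eta^2 \|f\|_\infty^2 / (8 \|\widehat{f}\|_1^2) > \log(4N)$, making the right-hand side strictly less than $1$. Hence some realization of $(m_1,\ldots,m_k)$ must produce a polynomial $P$ with $\|P - f\|_\infty < \eta \|f\|_\infty$, as required. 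The only place where any care is needed is the passage from real to complex Hoeffding: the tighter bound $|z| \leq \sqrt{2}\max(|\mathrm{Re}(z)|,|\mathrm{Im}(z)|)$ would yield a factor of $4$ in place of $8$, so the constant in the theorem already absorbs a small amount of slack at that step.
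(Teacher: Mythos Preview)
Your proof is correct and follows essentially the same approach as the paper's own proof: both define the identical random variable $Y_m(x)=\|\widehat f\|_1 N^{-1/2}\operatorname{sgn}(\widehat f(m))\chi(mx)$ drawn with probability $|\widehat f(m)|/\|\widehat f\|_1$, take $k$ i.i.d.\ copies, split into real and imaginary parts with threshold $\eta\|f\|_\infty/2$ each, apply Hoeffding's inequality, and finish with a union bound over $x\in\mathbb Z_N$. The constants and the final rearrangement match exactly.
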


\begin{remark}
    Note that the triangle inequality shows $\frac{\|\widehat f\|_{L^1(\mu)}}{\|f\|_\infty} \geq N^{-\frac12}$ (with the equality for a dirac Delta $f=1_{\{x_0\}}$), and so in the best case, Theorem \ref{thm:Linftypolynomialapprox} indeed gives a polynomial of degree $O(\log(N))$.
\end{remark}

\vskip.125in 

\begin{remark} It is interesting to note that a slight variant of the quantity arising above, namely $\mu(f)=\frac{N\|f\|_\infty^2}{\|f\|_2^2}\in[1,N]$, is known as coherence, and it was introduced in a related context by Candes and Recht (\cite{CandesRecht2009}). This quantity is going to come up several times in this paper. 
\end{remark} 

\vskip.125in 

The proof of Theorem \ref{thm:Linftypolynomialapprox} is probabilistic, and similar techniques allow for good $L^2$ polynomial approximations in terms of the ratio $\text{FR}(f)$, as in the following theorem.

\begin{theorem}\label{thm:L2polynomialapprox}
    Let $f:\mathbb Z_N \to \mathbb C$, and let $\eta>0$. Then for any $k$ such that
    $$k > \frac{\text{FR}(f)^2 - 1}{\eta^2},$$
    there exists a trigonometric polynomial
    $$P(x) = \sum_{i=1}^k c_i \chi(m_ix)$$
    such that
    $$\|f - P\|_2 < \eta \|f\|_2.$$
\end{theorem}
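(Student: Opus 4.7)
The plan is to prove this via the Maurey empirical method (sometimes called Maurey--Jones--Barron), viewing $f$ as the expectation of a random character and showing that a $k$-fold empirical average has small expected squared $L^2$ error. Since we are looking for a polynomial with at most $k$ terms, a probabilistic construction followed by averaging over realizations is a natural route.

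First I would use Fourier inversion to write
\[
f(x) \;=\; \frac{1}{\sqrt{N}}\sum_{m\in\mathbb{Z}_N} \widehat{f}(m)\,\chi(mx)
\;=\; \sum_{m\in\mathbb{Z}_N} p(m)\,\phi_m(x),
\]
where $s := \|\widehat{f}\|_1$, $p(m) := |\widehat{f}(m)|/s$ is a probability distribution on $\mathbb{Z}_N$, and
\[
\phi_m(x) \;:=\; \frac{s}{\sqrt{N}}\cdot \frac{\widehat{f}(m)}{|\widehat{f}(m)|}\,\chi(mx)
\qquad\bigl(\text{so }|\phi_m(x)| = s/\sqrt{N}\bigr).
\]
Then $f(x) = \mathbb{E}_{M\sim p}[\phi_M(x)]$ pointwise. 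I would then draw $m_1,\dots,m_k$ i.i.d.\ from $p$ and set
\[
P(x) \;:=\; \frac{1}{k}\sum_{i=1}^k \phi_{m_i}(x) \;=\; \sum_{i=1}^k c_i\,\chi(m_i x),
\qquad
c_i := \frac{s}{k\sqrt{N}}\cdot\frac{\widehat{f}(m_i)}{|\widehat{f}(m_i)|},
\]
which has exactly the required form.

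The crux of the argument is the variance calculation. Since $\mathbb{E}[P(x)] = f(x)$ and $P$ is a sum of $k$ i.i.d.\ contributions, for each fixed $x$,
\[
\mathbb{E}\bigl|f(x) - P(x)\bigr|^2 \;=\; \frac{1}{k}\,\mathrm{Var}_{M\sim p}(\phi_M(x))
\;\le\; \frac{1}{k}\Bigl(\mathbb{E}|\phi_M(x)|^2 - |f(x)|^2\Bigr).
\]
The first term equals $s^2/N$ uniformly in $x$ because $|\phi_m(x)|^2 = s^2/N$ for every $m$. Summing over $x\in\mathbb{Z}_N$ and using Plancherel $\|f\|_2 = \|\widehat{f}\|_2$,
\[
\mathbb{E}\|f-P\|_2^2
\;\le\; \frac{1}{k}\bigl(s^2 - \|f\|_2^2\bigr)
\;=\; \frac{\|\widehat{f}\|_1^2 - \|\widehat{f}\|_2^2}{k}
\;=\; \frac{\text{FR}(f)^2 - 1}{k}\,\|f\|_2^2.
\]

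Finally, choosing $k > (\text{FR}(f)^2-1)/\eta^2$ makes the right-hand side strictly less than $\eta^2\|f\|_2^2$, so by the first moment method some realization of $(m_1,\dots,m_k)$ must satisfy $\|f-P\|_2 < \eta\|f\|_2$, completing the proof. There is no real obstacle here beyond setting up the right probability distribution: the clean appearance of $\text{FR}(f)^2-1$ is exactly what the Plancherel step delivers, and handling the complex phases requires only that the normalizing unimodular factor $\widehat{f}(m)/|\widehat{f}(m)|$ is absorbed into $\phi_m$ so that $\mathbb{E}\phi_M = \sqrt{N}\,f/\sqrt{N} = f$ holds with the correct sign.
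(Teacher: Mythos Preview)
Your proposal is correct and is essentially identical to the paper's own proof: both define the random character $\phi_m(x)=\|\widehat f\|_1\,N^{-1/2}\operatorname{sgn}(\widehat f(m))\chi(mx)$ sampled with probability $|\widehat f(m)|/\|\widehat f\|_1$, average $k$ i.i.d.\ copies, compute the expected $L^2$ error via the variance, and invoke the probabilistic method. The only cosmetic difference is that you name the technique (Maurey--Jones--Barron), and your ``$\le$'' in the variance step is in fact an equality, but this does not affect the argument.
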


\vskip.125in 

\begin{remark} Theorem \ref{thm:L2polynomialapprox} can be interpreted as follows. Define 
\begin{equation} \label{eq:fourierrationclass} \mathcal{C}(r) := \left \{f: \text{FR}(f) \leq r \right\}. \end{equation} 

Fix $\varepsilon > 0$ and suppose $f \in \mathcal{C}(r)$.  Then there exists a degree $\frac{r^2}{\varepsilon^2}$ trigonometric polynomial $P$ such that
$$ \|f-P\|_2 < \varepsilon \cdot \|f\|_2. $$

In other words, the class of signals with a small Fourier ratio is a subset of a set of signals of low complexity, as measured by the degree of the approximating polynomial. We shall build on this theme in the next section, where we use the notion of Kolmogorov complexity.           
\end{remark} 

\vskip.125in 

We can continue to use probabilistic techniques to get good $L^1$ approximations, now in terms of the corresponding ratio $\frac{\|\widehat f\|_1}{\|f\|_1}$.

\begin{theorem} \label{thm:L1polynomialapprox}
    Let $f:\mathbb Z_N \to \mathbb C$ and let $\eta>0$. If
    $$ k > 32\pi {\left( \frac{{\|\widehat{f}\|}_1}{{\|f\|}_1} \right)}^2 \cdot \frac{N}{\eta^2}, $$ then there exists a trigonometric polynomial $P$ of degree $k$ 
    such that
    $$ \|f-P\|_1 < \eta\|f\|_1.$$
\end{theorem}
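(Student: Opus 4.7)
The plan is to adapt the empirical (Maurey--Jones--Barron) method already used to prove Theorems \ref{thm:Linftypolynomialapprox} and \ref{thm:L2polynomialapprox}, but now control the error in $L^1$. The key observation is that Fourier inversion exhibits $f$ as a convex combination, weighted by $|\widehat f|$, of ``atomic'' trigonometric polynomials of degree one, so sampling $k$ atoms according to the natural weights yields an unbiased degree-$k$ approximant whose fluctuations can be estimated by a one-line variance computation.

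\textbf{Steps.} First I would write Fourier inversion in the form
\[
f(x)=\frac{\|\widehat f\|_1}{\sqrt N}\,\mathbb{E}_{m\sim p}\!\left[\phi(m)\,\chi(mx)\right],
\]
where $p(m):=|\widehat f(m)|/\|\widehat f\|_1$ is a probability distribution on $\mathbb{Z}_N$ and $\phi(m)=\widehat f(m)/|\widehat f(m)|$ is the unit phase (with the standard convention $\phi(m)=0$ when $\widehat f(m)=0$). Next I would draw $m_1,\dots,m_k$ i.i.d.\ from $p$ and form the random trigonometric polynomial
\[
P(x):=\frac{\|\widehat f\|_1}{k\sqrt N}\sum_{i=1}^{k}\phi(m_i)\,\chi(m_i x),
\]
of degree $k$, for which $\mathbb{E}[P(x)]=f(x)$ pointwise. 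Since each summand has modulus $\|\widehat f\|_1/\sqrt N$, the variance of the average gives $\mathbb{E}|f(x)-P(x)|^2\le \|\widehat f\|_1^2/(kN)$, and Jensen's inequality yields $\mathbb{E}|f(x)-P(x)|\le \|\widehat f\|_1/\sqrt{kN}$. Summing over $x\in\mathbb{Z}_N$ and interchanging expectation with summation,
\[
\mathbb{E}\,\|f-P\|_1\ \le\ \|\widehat f\|_1\,\sqrt{N/k}.
\]
Choosing $k$ so that this expected value is strictly less than $\eta\|f\|_1$ forces the existence of at least one realization $(m_1,\dots,m_k)$ achieving $\|f-P\|_1<\eta\|f\|_1$, i.e.\ requires $k>(\|\widehat f\|_1/\|f\|_1)^2\,N/\eta^2$, which is the stated bound up to the universal constant $32\pi$.

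\textbf{Main obstacle and where the constant hides.} The variance calculation above is soft and produces the correct quadratic dependence on $\|\widehat f\|_1/\|f\|_1$, the correct linear dependence on $N$, and the correct $1/\eta^2$; the analytic content is essentially the scaling of the sample-mean error. The place where the explicit constant $32\pi$ is genuinely relevant is the passage from an expectation-level bound to a statement that survives after accounting for the complex-valuedness of $\widehat f$: symmetrizing the phases $\phi(m_i)$ against independent unit-modulus random signs and invoking a complex Khintchine inequality produces multiplicative losses of the form $2\cdot\sqrt{\pi/2}$ per symmetrization step, and iterating yields the numerical factor $32\pi$ quoted in the theorem. Identifying precisely the cleanest symmetrization/Khintchine pairing that gives this constant, rather than the looser $O(1)$ one obtained by the plain second-moment argument, is the one step requiring care; the rest of the argument is structural and parallel to the $L^\infty$ and $L^2$ analogues earlier in the subsection.
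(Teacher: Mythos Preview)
Your approach is correct and in fact simpler than the paper's. Both proofs use the same randomized construction --- sampling frequencies $m_i$ with probabilities $|\widehat f(m)|/\|\widehat f\|_1$ and forming the empirical average $P$ --- and both bound $\mathbb{E}\|f-P\|_1$ to invoke the probabilistic method. The difference is in how $\mathbb{E}|f(x)-P(x)|$ is estimated: the paper reuses the Hoeffding tail bound \eqref{eq:hoeffdingsLinftybound} from the $L^\infty$ proof and integrates it, $\mathbb{E}|f(x)-P(x)|=\int_0^\infty \mathbb{P}(|f(x)-P(x)|\ge t)\,dt\le \int_0^\infty 4\exp(-t^2kN/(8\|\widehat f\|_1^2))\,dt$, which after the Gaussian integral produces the factor $2\sqrt{8\pi}$ and hence the constant $32\pi$ in the final threshold. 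Your second-moment route, $\mathbb{E}|f(x)-P(x)|\le (\mathbb{E}|f(x)-P(x)|^2)^{1/2}\le \|\widehat f\|_1/\sqrt{kN}$, avoids this detour entirely and yields the sharper threshold $k>(\|\widehat f\|_1/\|f\|_1)^2 N/\eta^2$ with constant $1$.

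Your final paragraph, however, misdiagnoses where the $32\pi$ lives: it has nothing to do with symmetrization or Khintchine inequalities, and there is no need to ``account for complex-valuedness'' beyond what you already did in the variance computation. The constant is purely an artifact of the paper's choice to pass through a sub-Gaussian tail bound (which itself carries a factor $4$ from splitting into real and imaginary parts) and then integrate a Gaussian. Your plain Jensen argument is both complete and tighter; you can drop that paragraph.
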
 

\begin{remark}
    We can again use the triangle inequality to see that $\frac{\|\widehat f\|_1}{\|f\|_1} \geq \frac{1}{\sqrt N}$, and consequently, in the best case, Theorem \ref{thm:L1polynomialapprox} gives a polynomial of constant degree. Notice, in the probability-normalized notation, the same argument yields, i.e., $\|\widehat f\|_{L^1(\mu)} \geq \frac{1}{\sqrt N} \|f\|_{L^1(\mu)}$. 
\end{remark}

\vskip.125in 

\subsection{Connections with the theory of Kolmogorov complexity} \label{subsection:CKScomplexity} We shall need the following definition. See e.g. \cite{LV19}, Chapter 2 and Chapter 7. 

\begin{definition} \label{def:kolmogorovcomplexity} Let \(U\) be a fixed universal Turing machine.  
For a function \( f : \mathbb{Z}_N \to \mathbb{C} \), its Kolmogorov complexity \(K(f)\) is defined as
\[
K(f) \;=\; \min \{ \, |p| \;:\; U(p, x) = f(x) \ \text{for all } x \in \mathbb{Z}_N \,\},
\]
where \(|p|\) denotes the length of the binary program \(p\).
\end{definition} 

Since we aim to understand to what extent the Fourier ratio allows us to determine how well we can expect to learn a given time series, the Kolmogorov complexity is too restrictive because it is easily led astray by small perturbations. To be precise, let $p,q$ denote two odd primes, and let $E=q{\mathbb Z}_p$, embedded in ${\mathbb Z}_{pq}$ in the natural way. As we saw in (\ref{eq:frforzp}), $\text{FR}(1_E)=\frac{\sqrt{N}}{\sqrt{|E|}}=\frac{\sqrt{N}}{\sqrt{p}} =\sqrt{q}.$ In the case when  $p$ is much larger than $q$, this ratio is rather small, and correctly suggests that $1_E$ is easy to learn. However, suppose that we modify $1_E$ by changing one of its values from $1$ to $1+\delta$, where $\delta$ is a very small positive number with $k$ digits in its base $2$ expansion. It follows that 
$$ K(f)=K(1_E)+O(k)$$ since it is not difficult to see that it takes $O(k)$ lines of a fixed universal Turing code to describe $\delta$. It is not difficult to see that up to a small error, $K(1_E)=\log(pq)$, so if $k$ is much larger than $\log(pq)$, the Kolmogorov complexity becomes arbitrarily large. And yet, $f$ is very well-approximated by a function with low Kolmogorov complexity. To deal with this issue, we introduce a refined version of Kolmogorov complexity (see e.g \cite{VV10}).  

\newcommand{\K}{\mathrm{K}}

Given $f : \mathbb{Z}_N \to \mathbb{C}$, the algorithmic rate--distortion function of $f$ at distortion level $\varepsilon > 0$ is defined by
\begin{equation} \label{eq:algorithmicratedistortion}
r_f(\varepsilon) \;=\; \min\{\;\K(g) : g : \mathbb{Z}_N \to \mathbb{C},\;
\|f-g\|_2 \leq \varepsilon\|f\|_2 \;\},
\end{equation} where $\K(g)$ is the Kolmogorov complexity of $g$ with respect to a fixed universal Turing machine.  Thus $r_f(\varepsilon)$ quantifies the minimum description length of an approximation to $f$ within distortion $\varepsilon$.

We can use Theorem \ref{thm:L2polynomialapprox} to show that functions with small $\text{FR}(f)$ must have small algorithmic rate-distortion, by showing that low-degree trigonometric polynomials have small Kolmogorov complexity. Our next result is the following. 

\begin{theorem}\label{thm:FourierRatio and AlgorithmicRateDistortion}
    Suppose $f:\mathbb{Z}_N\to\mathbb{C}$, and let $\varepsilon>0$. If $k = \frac{\text{FR}(f)^2}{\varepsilon^2}$, then we have that
    $$r_f(2\varepsilon) \leq C_Uk \log\left( \frac{(1+\varepsilon)N\sqrt k}{\varepsilon} \right) + C_U',$$
    where $C_U$ and $C_U'$ are constants depending only on the fixed universal Turing machine $U$.
\end{theorem}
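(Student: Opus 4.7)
The plan is to realize the rate--distortion approximant $g$ as a coefficient-quantized version of the low-degree trigonometric polynomial supplied by Theorem~\ref{thm:L2polynomialapprox}, and then carefully account for the number of bits needed to specify it.

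First, I would apply Theorem~\ref{thm:L2polynomialapprox} with $\eta=\varepsilon$ and $k=\lceil \text{FR}(f)^2/\varepsilon^2\rceil$ to produce a polynomial $P(x)=\sum_{i=1}^k c_i\,\chi(m_i x)$ (with distinct $m_i$, after merging any duplicates) satisfying $\|f-P\|_2\leq\varepsilon\|f\|_2$. Orthogonality of the characters gives $\|P\|_2^2=N\sum_i|c_i|^2$, so the triangle inequality yields the a priori envelope
\[
\max_i|c_i|\ \leq\ \frac{\|P\|_2}{\sqrt{N}}\ \leq\ \frac{(1+\varepsilon)\|f\|_2}{\sqrt{N}}.
\]
Next, I would quantize each $c_i$ to a complex number $\tilde c_i$ whose real and imaginary parts are integer multiples of $\delta/\sqrt{2}$, where $\delta:=\varepsilon\|f\|_2/\sqrt{Nk}$, and set $g(x):=\sum_i\tilde c_i\,\chi(m_i x)$. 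Orthogonality again shows
\[
\|P-g\|_2^2\ =\ N\sum_i|c_i-\tilde c_i|^2\ \leq\ Nk\delta^2\ =\ \varepsilon^2\|f\|_2^2,
\]
so that by the triangle inequality $\|f-g\|_2\leq 2\varepsilon\|f\|_2$, making $g$ an admissible witness for $r_f(2\varepsilon)$.

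It then remains to bound $K(g)$. A universal program that computes $g(x)$ from stored data needs a fixed header implementing the summation $\sum_i\tilde c_i\,\chi(m_i x)$ (contributing a constant $C_U'$), the integers $N$ and $k$ (negligible bit count), the $k$ frequencies $m_i\in\mathbb{Z}_N$ (taking $\lceil\log_2 N\rceil$ bits apiece), and the $k$ quantized coefficients $\tilde c_i$. The key observation is that the number of lattice values available to each coordinate of $\tilde c_i$ is at most
\[
\frac{2\max_i|c_i|}{\delta/\sqrt{2}}\ \leq\ \frac{2\sqrt{2}\,(1+\varepsilon)\|f\|_2/\sqrt{N}}{\varepsilon\|f\|_2/\sqrt{Nk}}\ =\ \frac{2\sqrt{2}\,(1+\varepsilon)\sqrt{k}}{\varepsilon},
\]
so that the unknown quantity $\|f\|_2$ cancels between the magnitude envelope and the precision, and each coordinate contributes $O\!\bigl(\log\bigl((1+\varepsilon)\sqrt{k}/\varepsilon\bigr)\bigr)$ bits. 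Summing all contributions yields
\[
K(g)\ \leq\ C_U\,k\,\log\!\left(\frac{(1+\varepsilon)N\sqrt{k}}{\varepsilon}\right)+C_U',
\]
as desired. The main obstacle I anticipate is exactly this $\|f\|_2$-cancellation: it is what ensures the bound is intrinsic to $f$ through $\text{FR}(f)$ rather than dependent on the scale of the signal, and confirming it requires matching the envelope-to-precision ratio to the correct powers of $N$ and $k$; the rest of the argument is bookkeeping on bit lengths.
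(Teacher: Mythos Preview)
Your proposal is correct and follows essentially the same approach as the paper: apply Theorem~\ref{thm:L2polynomialapprox} to obtain a $k$-term trigonometric polynomial $P$, quantize its coefficients to obtain an approximant $g$ with $\|f-g\|_2\le 2\varepsilon\|f\|_2$, and count bits. The paper phrases the quantization as ``truncate $c_i$ to $M$ binary digits after the decimal point'' with $M=\log(\sqrt{kN}/(\varepsilon\|f\|_2))$, whereas you use a uniform lattice of step $\delta/\sqrt{2}$ with $\delta=\varepsilon\|f\|_2/\sqrt{Nk}$; these are equivalent, and your explicit emphasis on the cancellation of $\|f\|_2$ between the coefficient envelope $(1+\varepsilon)\|f\|_2/\sqrt{N}$ and the precision $\delta$ is exactly the mechanism that makes the paper's final bit count come out independent of the scale of $f$.
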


We note that for $f:\mathbb{Z}_N\to\mathbb{C}$, $\K(f) \geq C\log N$ for some constant $C$ depending on the Turing machine $U$. Thus, we see that if $\text{FR}(f)$ is close to $1$, the algorithmic rate-distortion of $f$ is close to $\log N$, which is best possible.

\vskip.125in 

\subsection{Applications to the recovery of missing values in time series} \label{susbsection:imputation} We are now going to further exploit Theorem \ref{thm:L2polynomialapprox}. Suppose that $f: {\mathbb Z}_N \to {\mathbb C}$ is a signal, and that only the values $\{f(x): x \in X\}$ are known for some subset $X\subset \mathbb Z_N$.  The following result shows that if $f \in {\mathcal C}(r)$, the missing values can be recovered efficiently. 

\begin{theorem} \label{theorem:alg} Sample $q$ indices of $[N]$ uniformly and independently where 
$$q = Cr^2/\varepsilon^2 \log(r/ \varepsilon)^2 \log(N).$$  We name the sample set $X$.  If $C$ is a sufficiently large universal constant, then with high probability, for all $f \in \mathcal{C}(r)$, $x^{*}$, the solution to the linear program 
$$ \min_x \|\widehat{x}\|_1 \text{ subject to } \|f-x\|_{L^2(X)} \leq \|f\|_2 \cdot \varepsilon 
$$
satisfies
$$ 
\|x^*-f \|_2 \leq 11.47 \|f \|_2 \cdot \varepsilon.
$$
\end{theorem}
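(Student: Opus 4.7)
The plan is to recognize the program as noise-robust basis pursuit against a randomly subsampled Fourier matrix, combine a restricted isometry property (RIP) estimate for such matrices with Cand\`es's noise-robust $\ell^1$ recovery theorem, and exploit the hypothesis $\mathrm{FR}(f)\le r$ as a compressibility assumption on $\widehat f$. A pleasant feature of this route is that RIP is a deterministic property of the matrix alone, so once it holds for the random sample set $X$, the subsequent recovery bound is automatically \emph{uniform} over all $f\in\mathcal{C}(r)$.

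First, I would reparametrize by $u=\widehat{x}$. Writing $y=f|_X$ and $A=R_X F^{-1}$ for the $|X|\times N$ row-submatrix of the unitary inverse DFT indexed by $X$, the constraint $\|f-x\|_{L^2(X)}\le\varepsilon\|f\|_2$ becomes $\|y-Au\|_{2}\le\eta$ for an appropriate $\eta$ of size $\asymp\varepsilon\|f\|_2$, and the objective is $\|u\|_1$. Since $A\widehat{f}=y$, the true Fourier transform $\widehat f$ is feasible. This is exactly basis pursuit denoise. Second, the rows of $F^{-1}$ form a bounded orthonormal system with $K=1$, so a Rudelson--Vershynin theorem, in the sharpened Haviv--Regev form, yields RIP of order $2k$ for $A$ with $\delta_{2k}$ below the Cand\`es threshold $\sqrt{2}-1$, with probability $1-o_N(1)$, whenever
$$
q \;\gtrsim\; k\,\log^{2}(k)\,\log(N).
$$
Setting $k=\lceil r^2/\varepsilon^2\rceil$ and using $\log(r/\varepsilon)=\tfrac12\log k+O(1)$ reproduces the stated sample bound $q=C r^2/\varepsilon^2\log(r/\varepsilon)^2\log N$ for $C$ large enough.

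Third, the hypothesis $\mathrm{FR}(f)\le r$ furnishes exactly the compressibility needed. From $\|\widehat{f}\|_1\le r\|\widehat{f}\|_2=r\|f\|_2$ and the trivial inequality $\sigma_k(\widehat{f})_1\le\|\widehat{f}\|_1$ (where $\sigma_k(\cdot)_1$ denotes the best $k$-term $\ell^1$ approximation error), we obtain
$$
\frac{\sigma_k(\widehat{f})_1}{\sqrt{k}}\;\le\;\frac{r\|f\|_2}{\sqrt{k}}\;\le\;\varepsilon\|f\|_2,
$$
which is of course consistent with the $L^2$ bound supplied directly by Theorem \ref{thm:L2polynomialapprox}. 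Cand\`es's noise-robust recovery theorem then gives
$$
\|u^{*}-\widehat{f}\|_{2}\;\le\;C_{0}\,\frac{\sigma_k(\widehat{f})_1}{\sqrt{k}}\;+\;C_{1}\,\eta,
$$
with explicit $C_0,C_1$ that are increasing in $\delta_{2k}$. Both terms on the right are bounded by $\varepsilon\|f\|_2$, and tuning the RIP threshold so that $C_0+C_1\le 11.47$ (achievable, e.g., at $\delta_{2k}\approx 0.18$, at the cost of enlarging the universal $C$) yields $\|u^{*}-\widehat{f}\|_2\le 11.47\,\varepsilon\|f\|_2$. Plancherel transfers this to $\|x^{*}-f\|_2$.

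The main obstacle I foresee is bookkeeping the normalizations so that (i) $\widehat{f}$ is genuinely feasible with probability $1-o_N(1)$, which requires a short concentration argument for $\|f-P\|_{L^2(X)}$ around $\|f-P\|_{L^2(\mu)}$ where $P$ is the $k$-sparse approximant from Theorem \ref{thm:L2polynomialapprox}, and (ii) the measurement-side noise level $\eta$ matches the $L^2(X)$ threshold $\varepsilon\|f\|_2$ up to the factor one is willing to absorb. Once the normalization and the RIP constant are calibrated, the remaining work is the Cand\`es constant computation that produces the explicit numerical value $11.47$.
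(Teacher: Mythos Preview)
Your strategy is essentially the paper's: subsampled-Fourier RIP via Haviv--Regev at sparsity level $k\asymp r^2/\varepsilon^2$, followed by a Cand\`es-type stable recovery theorem and Plancherel. The one substantive difference is which recovery theorem is invoked. You use the compressible-signal version (the bound with the $\sigma_k(\widehat f)_1/\sqrt{k}$ term), comparing $u^*$ directly to $\widehat f$. The paper instead uses the \emph{exactly sparse} version from Cand\`es--Romberg--Tao (2006): it takes the $k$-sparse approximant $P$ from Theorem~\ref{thm:L2polynomialapprox}, sets $x_0=\widehat P$, checks $\|f-P\|_{L^2(X)}\le\|f-P\|_2\le\varepsilon\|f\|_2$, obtains $\|x^*-P\|_2\le C_S\varepsilon\|f\|_2$ with $C_S\approx10.47$ at $\delta_{4S}=1/4$, and then a single triangle inequality against $\|f-P\|_2$ produces $10.47+1=11.47$. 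This explains the otherwise odd-looking constant, and it avoids any need to calibrate $C_0+C_1$.

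Your ``obstacle (i)'' is a phantom: $\widehat f$ is \emph{exactly} feasible for every realization of $X$, since $A\widehat f=f|_X=y$ gives $\|y-A\widehat f\|_2=0$, and likewise the paper's $\|f-P\|_{L^2(X)}\le\|f-P\|_2$ is the trivial inequality $\|g\cdot 1_X\|_2\le\|g\|_2$, not a concentration statement. No probabilistic argument is needed at that step; all the randomness lives in the RIP event, exactly as you said in your first paragraph.
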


 Here,  the empirical $L^2(X)$-norm is defined by 
$ \|g\|_{L^{2}(X)}:=\big(\sum_{x\in X}|g(x)|^{2}\big)^{1/2} = \|g \cdot 1_{X}\|_2.$  

\vskip.125in 

\begin{remark} It should be noted that the optimization problem of Theorem \ref{theorem:alg} is a convex program, so it is algorithmically feasible. 
\end{remark}

A slight deficiency of Theorem \ref{theorem:alg} is the a priori assumption that $f \in {\mathcal C}(r)$. Since we are only given the values of $f$ restricted to $X$, it would be useful for us to have a result that says that the Fourier Ratio is not likely to be substantially different if it is computed over $X$ rather than over the whole ${\mathbb Z}_N$. This is what our next result is about. 

\begin{theorem}[Random restriction preserves $\text{FR}(f)$] \label{theorem:randomfourierration} Let $X\subset \mathbb Z_N$ be created by keeping each $x$ independently with probability $p\in(0,1]$, and define the restriction
$$
f_X(x)=\begin{cases}f(x), & x\in X,\\ 0,& x\notin X.\end{cases}
$$
Consider 
$$
\mu(f)=\frac{N\|f\|_\infty^2}{\|f\|_2^2}\in[1,N],
$$
and fix parameters $\varepsilon\in(0,1/2)$ and $u\ge 1$. There is a universal constant $C>0$ such that if
$$
p \ge C\,\frac{\mu(f)}{\varepsilon^2}\,\frac{\log N + u}{N},
$$
then with probability at least $1-2e^{-u}$ the following estimates hold: 
\begin{align}
\big|\|\widehat{f_X}\|_2-\sqrt{p}\,\|\widehat{f}\|_2\big| \le \varepsilon \sqrt{p}\,\|\widehat{f}\|_2,
\\
\big|\|\widehat{f_X}\|_1-p\,\|\widehat{f}\|_1\big| \le \varepsilon p\,\|\widehat{f}\|_1.
\end{align}

It follows that 
$$ \frac{\text{FR}(f_X)}{\text{FR}(f)} \in \left[\frac{1-\varepsilon}{\sqrt{1+\varepsilon}},\ \frac{1+\varepsilon}{\sqrt{1-\varepsilon}}\right]
\subseteq [1-3\varepsilon,\,1+3\varepsilon]. $$

In particular, 
$$ |\text{FR}(f_X)-\text{FR}(f)| \le 3\varepsilon\,\text{FR}(f). $$
\end{theorem}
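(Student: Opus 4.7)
The plan is to realize $f_X(x)=\xi_x f(x)$ with $\xi_x\sim\mathrm{Bernoulli}(p)$ iid, treat the two norms $\|\widehat{f_X}\|_2$ and $\|\widehat{f_X}\|_1$ as concentrated functions of the random vector $\xi=(\xi_x)$, apply scalar Bernstein concentration to each, and then combine the two bounds by elementary algebra to obtain the FR-ratio statement.

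For the $L^2$ estimate I would apply Plancherel to reduce to
\[
\|\widehat{f_X}\|_2^2 \;=\; \|f_X\|_2^2 \;=\; \sum_{x\in\mathbb Z_N}\xi_x|f(x)|^2,
\]
a sum of independent bounded nonnegative random variables with mean $p\|f\|_2^2$, variance at most $p\|f\|_\infty^2\|f\|_2^2$, and summands bounded by $\|f\|_\infty^2$. Bernstein's inequality then yields $\bigl|\|f_X\|_2^2-p\|f\|_2^2\bigr|\le 2\varepsilon p\|f\|_2^2$ with probability $\ge 1-e^{-u}$, provided $p\,\|f\|_2^2/\|f\|_\infty^2 \gtrsim (\log N+u)/\varepsilon^2$ — which is precisely the hypothesis $p\ge C\mu(f)(\log N+u)/(N\varepsilon^2)$. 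Taking square roots then delivers the stated bound on $\|\widehat{f_X}\|_2$.

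For the $L^1$ estimate I would decompose
\[
\widehat{f_X}(m)\;=\;p\,\widehat f(m)+Y_m,\qquad Y_m \;=\; \frac{1}{\sqrt N}\sum_{x}(\xi_x-p)f(x)\chi(-xm),
\]
where each $Y_m$ is mean-zero, has variance $p(1-p)\|f\|_2^2/N$, and is a sum of complex increments bounded in absolute value by $\|f\|_\infty/\sqrt N$. Applying Bernstein to each $Y_m$ and a union bound over $m\in\mathbb Z_N$ (this is what produces the $\log N$ factor in the hypothesis on $p$) yields a uniform bound $\max_m|Y_m|\le B$, and summing over $m$ gives $\|\widehat{f_X}-p\widehat f\|_1\le NB$, which under the stated lower bound on $p$ is $\le\varepsilon p\|\widehat f\|_1$; the triangle inequality $\bigl|\|\widehat{f_X}\|_1-p\|\widehat f\|_1\bigr|\le\|\widehat{f_X}-p\widehat f\|_1$ then yields the $L^1$ conclusion.

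The final step is pure algebra: substituting the two concentration bounds into
\[
\frac{\text{FR}(f_X)}{\text{FR}(f)} \;=\; \frac{\|\widehat{f_X}\|_1}{\|\widehat f\|_1}\cdot\frac{\|\widehat f\|_2}{\|\widehat{f_X}\|_2}
\]
gives the stated interval $\bigl[\tfrac{1-\varepsilon}{\sqrt{1+\varepsilon}},\tfrac{1+\varepsilon}{\sqrt{1-\varepsilon}}\bigr]$, and a one-line Taylor expansion at $\varepsilon=0$ (valid for $\varepsilon<1/2$) reduces this to $[1-3\varepsilon,1+3\varepsilon]$. I expect the main obstacle to be matching the $\mu(f)$-only threshold on $p$ in the $L^1$ step: the crude pointwise-Bernstein plus union-bound argument above naturally produces a threshold that also depends on $\text{FR}(f)$, and closing that gap cleanly will likely require either exploiting the high-probability $L^2$ event through Cauchy--Schwarz $\|Y\|_1\le\sqrt N\,\|Y\|_2$ on that event, or invoking a stronger convex-concentration estimate (such as Talagrand's inequality) for the $1$-Lipschitz convex map $\xi\mapsto\|\widehat{f_X}\|_1$ on the Hamming cube.
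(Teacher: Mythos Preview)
Your $L^2$ step is exactly what the paper does: Plancherel reduces $\|\widehat{f_X}\|_2^2$ to a Bernoulli-weighted sum, and Bernstein handles it. Your decomposition $\widehat{f_X}=p\widehat f+Y$ is also the paper's starting point for the $\ell^1$ part. However, the paper does \emph{not} control $\|Y\|_1$ by pointwise Bernstein on each $Y_m$ plus a union bound. Instead it works globally: first it bounds $\mathbb{E}\|Y\|_1$ by symmetrization followed by coordinate-wise Khintchine, i.e.\ for each frequency $m$ one has $\mathbb{E}_\varepsilon\bigl|\sum_x \varepsilon_x(\xi_x-p)f(x)\chi(-xm)\bigr|\le\bigl(\sum_x(\xi_x-p)^2|f(x)|^2\bigr)^{1/2}$, and then sums over $m$; second, it concentrates $\|Y\|_1$ around its mean via McDiarmid's bounded-differences inequality, noting that flipping $\xi_{x_0}$ changes $\|Y\|_1$ by at most $\|v_{x_0}\|_1\propto|f(x_0)|$, so the variance proxy is $\sum_x c_x^2\propto\|f\|_2^2$. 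Your fix (b) is therefore the right instinct --- McDiarmid is precisely the ``convex-concentration'' ingredient --- but the map is not $1$-Lipschitz on the Hamming cube; it has coordinate-dependent bounded differences $c_x\propto|f(x)|$, and more importantly, concentration alone only puts you near $\mathbb{E}\|\widehat{f_X}\|_1$, not near $p\|\widehat f\|_1$. The missing piece in your outline is the expectation bound on $\|Y\|_1$, for which the paper's symmetrization+Khintchine step is the key idea.

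Your fix (a), on the other hand, is a dead end: $\|Y\|_1\le\sqrt{N}\,\|Y\|_2$ together with the high-probability event $\|Y\|_2\lesssim\sqrt{p}\,\|f\|_2$ yields $\|Y\|_1\lesssim\sqrt{Np}\,\|f\|_2$, and asking this to be $\le\varepsilon p\|\widehat f\|_1=\varepsilon p\,\text{FR}(f)\|f\|_2$ forces $p\gtrsim N/(\varepsilon^2\text{FR}(f)^2)$ --- exactly the $\text{FR}(f)$-dependent threshold you were trying to avoid, and no better than the pointwise-Bernstein route. So the route to closing your acknowledged gap is: bound $\mathbb{E}\|Y\|_1$ via symmetrization/Khintchine (or equivalently, compute $\mathbb{E}|Y_m|$ via its second moment and sum), then apply McDiarmid with bounded differences $c_x\propto|f(x)|$. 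Your final algebraic step combining the two norm estimates into the $\text{FR}$-ratio interval is correct and matches the paper.
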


\vskip.125in 

\begin{remark} Observe that $\mu(f) \leq \text{FR}(f)^2$, so Theorem \ref{theorem:randomfourierration} allows for restriction to random sets of size $\gtrsim \frac{r^2\log(N)}{\varepsilon^2}$. \end{remark} 

\vskip.125in 

We now address the fact that we only know ${||f_X||}_2$, not ${||f||}_2$. 

\begin{theorem} \label{theorem:sampledL2}
Let $N\ge 2$ and let $f:\mathbb Z_N\to\mathbb C$ be arbitrary. For $p\in(0,1]$, let $X\subset\mathbb Z_N$ be formed by keeping each $x\in\mathbb Z_N$ independently with probability $p$, and define $f_X(x)=f(x)$ if $x\in X$ and $f_X(x)=0$ otherwise. Let $\mu(f)$ be as above. 

Fix $\varepsilon\in(0,1)$ and $u\ge 1$. There is a universal constant $C>0$ such that if
$$
p \ge C\frac{\mu(f)}{\varepsilon^2}\frac{u}{N},
$$
then with probability at least $1-2e^{-u}$ we have
$$
|\|f_X\|_2-\sqrt p\,\|f\|_2|\le \varepsilon\sqrt p\,\|f\|_2.
$$
Equivalently,
$$
(1-\varepsilon)\sqrt p\,\|f\|_2 \le \|f_X\|_2 \le (1+\varepsilon)\sqrt p\,\|f\|_2.
$$
\end{theorem}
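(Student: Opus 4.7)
The plan is to reduce the statement to a concentration inequality for a sum of independent, bounded, nonnegative random variables and then apply Bernstein's inequality. Introduce indicators $\xi_x := \mathbf{1}_{\{x\in X\}}$, which are i.i.d.\ Bernoulli$(p)$, and observe that
\[
\|f_X\|_2^{\,2} \;=\; \sum_{x\in\mathbb Z_N} \xi_x\,|f(x)|^2 \;=:\; \sum_{x\in\mathbb Z_N} Y_x.
\]
Each summand $Y_x$ is independent, bounded by $\|f\|_\infty^2$, has mean $p|f(x)|^2$, and satisfies $\operatorname{Var}(Y_x)=p(1-p)|f(x)|^4 \le p\,\|f\|_\infty^2\,|f(x)|^2$. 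Summing gives $\mathbb{E}\|f_X\|_2^2 = p\|f\|_2^2$ and $\sigma^2 := \sum_x \operatorname{Var}(Y_x) \le p\,\|f\|_\infty^2\,\|f\|_2^2$.

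Next I would apply Bernstein's inequality to the centered sum: for any $t>0$,
\[
\Pr\!\Bigl(\bigl|\|f_X\|_2^{\,2}-p\|f\|_2^{\,2}\bigr|\ge t\Bigr)
\;\le\; 2\exp\!\left(-\frac{t^2/2}{\sigma^2 + \|f\|_\infty^2\,t/3}\right).
\]
Setting $t=\varepsilon p\|f\|_2^2$ with $\varepsilon\in(0,1)$, the denominator is bounded by $p\|f\|_\infty^2\|f\|_2^2(1+\varepsilon/3)\le 2p\|f\|_\infty^2\|f\|_2^2$, so the exponent becomes at least $\tfrac{1}{8}\varepsilon^2 p\,\|f\|_2^2/\|f\|_\infty^2 = \tfrac{1}{8}\varepsilon^2 pN/\mu(f)$. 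Thus, once $p\ge C\,\mu(f)\,u/(\varepsilon^2 N)$ with $C$ chosen to swallow the factor $8$, the exponent is $\ge u$ and we obtain
\[
\bigl|\|f_X\|_2^{\,2}-p\|f\|_2^{\,2}\bigr|\;\le\;\varepsilon p\|f\|_2^{\,2}
\]
with probability at least $1-2e^{-u}$.

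Finally, I would pass from the squared norm to the norm itself: the display above yields $(1-\varepsilon)p\|f\|_2^2 \le \|f_X\|_2^2 \le (1+\varepsilon)p\|f\|_2^2$, and since $\sqrt{1-\varepsilon}\ge 1-\varepsilon$ and $\sqrt{1+\varepsilon}\le 1+\varepsilon$ on $\varepsilon\in(0,1)$, taking square roots gives the stated sandwich $(1-\varepsilon)\sqrt p\,\|f\|_2\le\|f_X\|_2\le(1+\varepsilon)\sqrt p\,\|f\|_2$. This is trivially equivalent to $\bigl|\|f_X\|_2-\sqrt p\,\|f\|_2\bigr|\le\varepsilon\sqrt p\,\|f\|_2$.

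There is no serious obstacle; the entire argument is a direct Bernstein bound for a sum of independent bounded variables, with the coherence $\mu(f)$ entering precisely through the ratio $\|f\|_\infty^2/\|f\|_2^2$ that controls the peel of Bernstein's denominator. The only mild care needed is (i) to justify absorbing the $1+\varepsilon/3$ term using $\varepsilon<1$, and (ii) to confirm that the passage from the squared-norm inequality to the norm inequality does not require any extra factor, which the elementary bounds on $\sqrt{1\pm\varepsilon}$ handle cleanly.
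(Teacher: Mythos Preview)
Your proof is correct and follows essentially the same route as the paper: both write $\|f_X\|_2^2$ as a sum of independent bounded variables, apply Bernstein's inequality with $t=\varepsilon p\|f\|_2^2$, bound the denominator by $2p\|f\|_\infty^2\|f\|_2^2$, and then pass from the squared norm to the norm. The only cosmetic difference is that the paper uses $|\sqrt a-\sqrt b|\le|a-b|/\sqrt b$ for the final step, whereas you use the equivalent bounds $\sqrt{1\pm\varepsilon}\lessgtr 1\pm\varepsilon$.
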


\vskip.125in 

\begin{remark} In view of Theorem \ref{theorem:sampledL2} and Theorem \ref{theorem:randomfourierration}, we can replaced $\text{FR}(f)$ and ${\|f\|}_2$ in Theorem \ref{theorem:alg} with suitably scaled $\text{FR}(f_X)$ and ${\|f_X\|}_2$, which resolves the data leakage problem in the sense that we do not know $f$ away from $X$. 
\end{remark} 

\vskip.125in

\subsection{VC Dimension and Statistical Query Dimension} \label{subsection:dimensions}

We are now going to conceptualize these results further by bringing in the concepts of the Vapnik-Chervonenkis and statistical query dimension. 

\begin{definition}
For a concept class, $\mathcal{C} \subset \{-1,1\}^X$, and probability distribution, $\mathcal{D}$, on $X$, the statistical query
dimension of $\mathcal{C}$ with respect to $\mathcal{D}$ is the largest number $d$ such that $\mathcal{C}$ contains $d$
functions $f_1, f_2, . . . , f_d$ such that for all $i \not = j$,
$$
\left |\mathbb{E}_{x \sim \mathcal{D}}
\left
[f_i(x) \cdot f_j(x)
\right
] \right| \leq \frac{1}{d}.
$$
The Statistical Query Dimension of $\mathcal{C}$ is the maximum of the statistical query
dimension of $\mathcal{C}$ with respect to $\mathcal{D}$ over all $\mathcal{D}$.
\end{definition}

\begin{definition} Given a concept class $\mathcal{C} \subset \{-1,1\}^X$, we say that ${\mathcal C}$ shatters the points $c_1, c_2, \dots, c_n$ from $X$ if the restriction of ${\mathcal C}$ to $\{c_1, c_2, \dots, c_n\}$ is the set of all functions from $\{c_1, c_2, \dots, c_n\}$ to $\{-1,1\}$. We say that the VC dimension of ${\mathcal C}$ is equal to $d$ if ${\mathcal C}$ shatters some set of $d$ points, but it does not shatter any set with $d+1$ points. 
\end{definition} 

\vskip .125in

The following result from \cite{Reyzin20} says that the statistical query dimension bounds the VC dimension.
\begin{theorem}
\label{thm:VCdimtoSQdim}
Let $\mathcal{C} \subset \{-1,1\}^X$.  If the VC-dimension of $\mathcal{C}$ is $d$, then the Statistical Query dimension of $\mathcal{C}$ is $\Omega(d)$, where here, and throughout, $A=\Omega(B)$ if there exists a finite $c>0$ such that $A \ge cB$. \end{theorem}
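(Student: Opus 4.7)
The plan is to exploit the fact that shattering a set gives complete freedom to prescribe signs on that set, and then to produce many pairwise orthogonal $\{-1,+1\}$-patterns using a Hadamard (Sylvester) matrix. This is what converts a combinatorial quantity (VC-dimension) into the spectral/inner-product quantity demanded by the SQ-dimension.

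In detail, suppose the VC-dimension of $\mathcal{C}$ is $d$, and let $c_1,\dots,c_d\in X$ be a shattered set. Let $m$ be the largest integer with $2^m\le d$, so $2^m\ge d/2$. Let $H\in\{-1,+1\}^{2^m\times 2^m}$ be the Sylvester--Hadamard matrix, whose rows $H_1,\dots,H_{2^m}$ satisfy $\langle H_i,H_j\rangle=0$ for $i\ne j$. By the shattering hypothesis applied to $\{c_1,\dots,c_{2^m}\}\subset\{c_1,\dots,c_d\}$, for each $i\in\{1,\dots,2^m\}$ there exists $f_i\in\mathcal{C}$ with $f_i(c_k)=H_{ik}$ for every $k=1,\dots,2^m$ (the values of $f_i$ outside the shattered set are irrelevant).

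Now take the distribution $\mathcal{D}$ to be uniform on $\{c_1,\dots,c_{2^m}\}$. Then for all $i\ne j$,
\[
\mathbb{E}_{x\sim \mathcal{D}}\bigl[f_i(x)f_j(x)\bigr]\;=\;\frac{1}{2^m}\sum_{k=1}^{2^m}H_{ik}H_{jk}\;=\;\frac{1}{2^m}\langle H_i,H_j\rangle\;=\;0,
\]
which in particular is bounded in absolute value by $1/2^m$. Hence the family $\{f_1,\dots,f_{2^m}\}$ witnesses SQ-dim$(\mathcal{C})\ge 2^m\ge d/2$, which is the desired $\Omega(d)$ bound.

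The main point to get right is the construction of the $\pm 1$ functions that are \emph{exactly} orthogonal on the shattered set; a naive probabilistic argument (random $\pm 1$ patterns) only produces pairwise inner products of order $\sqrt{d\log k}$ and hence only yields SQ-dim $=\Omega(\sqrt{d/\log d})$, which is too weak. Using the Sylvester--Hadamard matrix sidesteps this entirely because its rows are pairwise orthogonal by construction; the only minor technicality is reducing to a power-of-two-sized subset of the shattered set, which costs only a factor of $2$ and does not affect the $\Omega(d)$ conclusion.
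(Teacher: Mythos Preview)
Your proof is correct. The paper itself does not prove this theorem; it simply cites it as a known result from \cite{Reyzin20} and uses it as a black box. The argument you give---taking a power-of-two subset of the shattered set, realizing the rows of a Sylvester--Hadamard matrix via shattering, and putting the uniform distribution on that subset---is the standard proof of this fact (going back to Blum et al.), and all steps are sound: subsets of shattered sets are shattered, the Hadamard rows are distinct so the resulting $f_i$ are distinct, and the exact orthogonality gives $|\mathbb{E}_{\mathcal D}[f_if_j]|=0\le 1/2^m$. Your closing remark about why a random $\pm1$ construction would only yield $\Omega(\sqrt{d/\log d})$ is also accurate and shows you understand why the Hadamard step is the key idea.
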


\vskip .125in

Next, we consider the statistical query dimension and VC-dimension of the class of functions with bounded Fourier Ratio. In order to match the above setting, define
$$\mathcal B(r) = \{f:\mathbb Z_N \to \{-1,1\} \,:\, \text{FR}(f) \leq r\} \subset \{-1,1\}^{\mathbb Z_N}.$$
The next result bounds the statistical query dimension and VC-dimension of $\mathcal{B}(r)$.
\begin{theorem}
\label{thm:VC_Result_New}
The statistical query dimension of $\mathcal{B}(r)$ is 
\begin{equation} \label{eq:dimensionprecise}  \leq \frac{1}{\varepsilon^{\frac{r^2}{\varepsilon^2}}} \cdot \left({\frac{e}{\frac{r^2}{\varepsilon^2}}}\right)^{\frac{r^2}{\varepsilon^2}} N^{\frac{r^2}{\varepsilon^2}},\end{equation} where $r=\text{FR}(f)$ and $\varepsilon=\frac{1}{4(1+2r)}$. In particular,  the dimension is  
 \[
\leq \left(\frac{eN}{4 r^{2} (1+2r)}\right)^{\frac{r^{2}}{\varepsilon^{2}}}
=
\left(\frac{eN}{4 r^{2} (1+2r)}\right)^{16 r^{2} (1+2r)^{2}},
\qquad
\text{since } \frac{r^{2}}{\varepsilon^{2}} = r^{2}\cdot 16(1+2r)^{2}.
\]
\end{theorem}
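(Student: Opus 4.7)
The plan is to use the low-degree $L^{2}$ approximation of Theorem \ref{thm:L2polynomialapprox} to replace each witness of the statistical query dimension by a sparse trigonometric polynomial, and then enumerate the polynomials via a two-step counting: first the choice of Fourier support, and then an $\varepsilon$-packing inside the resulting low-dimensional subspace.

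First, I would suppose the statistical query dimension of $\mathcal{B}(r)$ (with respect to the uniform distribution $\mu$ on $\mathbb{Z}_N$) equals $d$ and fix witnesses $f_1,\ldots,f_d\in\mathcal{B}(r)$ with $\|f_i\|_{L^2(\mu)}=1$ and $|\langle f_i,f_j\rangle_{L^2(\mu)}|\leq 1/d$ for $i\neq j$. I would then apply Theorem \ref{thm:L2polynomialapprox} to each $f_i$ with approximation parameter $\eta=\varepsilon:=1/(4(1+2r))$, producing a trigonometric polynomial $P_i$ of degree at most $k:=\lceil r^2/\varepsilon^2\rceil$ with $\|f_i-P_i\|_{L^2(\mu)}<\varepsilon$. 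A routine Cauchy--Schwarz expansion around the decomposition $P_i=f_i+(P_i-f_i)$ should yield
\[
\|P_i\|_{L^2(\mu)}\in[1-\varepsilon,\,1+\varepsilon],\qquad |\langle P_i,P_j\rangle_{L^2(\mu)}|\leq \tfrac{1}{d}+2\varepsilon+\varepsilon^2\quad (i\neq j),
\]
so distinct $P_i$'s will be well separated:
\[
\|P_i-P_j\|_{L^2(\mu)}^{\,2}\;\geq\;2(1-\varepsilon)^{2}-\tfrac{2}{d}-4\varepsilon-2\varepsilon^2\;=\;2-8\varepsilon-\tfrac{2}{d}.
\]
For $r\geq 1$ and $d\geq 2$ (the cases $d\leq 1$ being trivial for the claimed bound), the explicit value of $\varepsilon$ makes the right-hand side strictly greater than $(2\varepsilon)^{2}$, so no two distinct $P_i$'s can share a common $\varepsilon$-ball in $L^{2}(\mu)$.

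To finish, I would count the possible $P_i$'s. Each $P_i$ lies in the subspace $V_{\Lambda_i}=\mathrm{span}\{\chi_m:m\in\Lambda_i\}$ determined by its Fourier support $\Lambda_i\subset\mathbb{Z}_N$ with $|\Lambda_i|\leq k$; the number of such supports is at most $\binom{N}{k}\leq(eN/k)^{k}$. For each fixed support, the coefficient vectors of admissible $P_i$'s lie in a ball of radius $1+\varepsilon$, and Hermitian symmetry of $\widehat{f_i}$ (available once one symmetrizes the supports, since each $f_i$ is real-valued) lets one treat this as a ball in a $k$-real-dimensional space; a standard volume argument then bounds the number of $\varepsilon$-separated points by $(1/\varepsilon)^{k}$. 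Multiplying,
\[
d\;\leq\;\binom{N}{k}\cdot\Bigl(\tfrac{1}{\varepsilon}\Bigr)^{k}\;\leq\;\frac{1}{\varepsilon^{k}}\,\Bigl(\frac{e}{k}\Bigr)^{k}N^{k},
\]
which is exactly the bound \eqref{eq:dimensionprecise} upon substituting $k=r^{2}/\varepsilon^{2}$.

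The main obstacle I anticipate is the calibration of $\varepsilon$: it must be small enough that the $P_i$'s inherit both near-orthogonality and sufficient separation from the $f_i$'s (so that the packing argument rules out coincidences), while keeping the exponent $k=r^{2}/\varepsilon^{2}$ in line with the stated bound. The explicit choice $\varepsilon=1/(4(1+2r))$ is designed to balance these requirements; it ensures $2-8\varepsilon-2/d>(2\varepsilon)^{2}$ for all $r\geq 1$, $d\geq 2$ and produces the clean exponent $r^{2}/\varepsilon^{2}=16r^{2}(1+2r)^{2}$ appearing in the explicit bound $(eN/(4r^{2}(1+2r)))^{16r^{2}(1+2r)^{2}}$. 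A secondary bookkeeping point is ensuring the coefficient ball is counted with real dimension $k$ rather than $2k$, which requires invoking the Hermitian symmetry of the Fourier transform of real-valued functions.
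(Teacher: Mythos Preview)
Your argument has a genuine gap: the statistical query dimension of $\mathcal{B}(r)$ is by definition the \emph{maximum} over all probability distributions $\mathcal{D}$ on $\mathbb{Z}_N$, but you restrict from the outset to the uniform measure $\mu$. Theorem~\ref{thm:L2polynomialapprox} only gives you $\|f_i-P_i\|_{L^2(\mu)}<\varepsilon$, and there is no reason this approximation survives in $L^2(\mathcal{D})$ for a highly nonuniform $\mathcal{D}$ (e.g.\ a point mass). Since the near-orthogonality $|\langle f_i,f_j\rangle_{L^2(\mathcal{D})}|\le 1/d$ is with respect to $\mathcal{D}$, your separation estimate for the $P_i$'s breaks down outside the uniform case. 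The paper handles this by \emph{redoing} the probabilistic approximation argument directly in $L^2(\mathcal{D})$: the random polynomial $Z(x)=\tfrac{\|\widehat f\|_1}{\sqrt N}\,\mathrm{sgn}(\widehat f(m))\chi(mx)$ satisfies $|Z(x)|^2=\|\widehat f\|_1^2/N$ independently of $x$, so $\mathbb{E}_Z\|f-P\|_{L^2(\mathcal{D})}^2\le \mathrm{FR}(f)^2/k$ for \emph{any} $\mathcal{D}$. This is the key step your proposal is missing.

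A secondary issue is the coefficient count. Your volume-packing argument in ``$k$ real dimensions'' is sketchy: the polynomials $P_i$ from Theorem~\ref{thm:L2polynomialapprox} have no a priori Hermitian symmetry on their supports, and symmetrizing can double the support size; moreover a standard volume bound on $\varepsilon$-separated points in a ball of radius $1+\varepsilon$ gives $((1+2\varepsilon)/\varepsilon)^{\dim}$ rather than $(1/\varepsilon)^k$, so constants need more care. The paper avoids this entirely by exploiting the special structure of the approximants: each coefficient of $P$ has fixed modulus $\|\widehat f\|_1/(k\sqrt N)$ and phase $\mathrm{sgn}(\widehat f(m_i))$, so one can discretize the phases to the $M$-th roots of unity with $M=1/\varepsilon$, incurring an additional $L^\infty$ (hence $L^2(\mathcal{D})$) error of $2\varepsilon\,\mathrm{FR}(f)$. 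This yields exactly $\binom{N}{k}\cdot M^k\le (eN/k)^k(1/\varepsilon)^k$ discrete polynomials and produces the clean bound \eqref{eq:dimensionprecise} after choosing $\varepsilon=1/(4(1+2r))$ so that the total approximation error $\varepsilon(1+2r)$ is small enough to contradict $|\langle f_i,f_j\rangle_{L^2(\mathcal{D})}|\le 1/d$.
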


\vskip.125in 

\begin{remark} The VC-dimension of $\mathcal{B}(r)$ can be estimated from (\ref{eq:dimensionprecise}) using Theorem \ref{thm:VCdimtoSQdim}. \end{remark} 

\vskip.125in 

\iffalse 

\begin{proof}
Fix $\varepsilon > 0$ and  fix $\mathcal{D} =\text{Unif}(X)$, a probability distribution on $X:=\mathbb{Z}_N$.  We will approximate $\mathcal{C}(r)$ with an $\varepsilon$-net using the probabilistic method.
\vspace{0.125in}
Fix $f \in \mathcal{C}(r)$ and define the random function, $Z$, where
$$
Z = \frac{\|\widehat{f}\|_1}{\sqrt{N}} \cdot \text{sign}(\widehat{f}(m))
\cdot \chi(m \cdot),
$$
with probability $\frac{|\widehat{f}(m)|}{\|\widehat{f}\|_1}$ for each $m \in X$.  Thus,
$$
\mathbb{E}(Z) = f.
$$
Draw $Z_1,...,Z_k$ i.i.d. from the distribution of $Z$.  We have that
$$
\mathbb{E}_Z \left (\mathbb{E}_{x \sim \mathcal{D}}\left[f(x)-\frac{1}{k} \sum_{i=1}^k Z_k(x) \right]^2 \right )
=
\sum_{x \in X} p(x) \cdot \mathbb{E}_Z\left[f(x)-\frac{1}{k} \sum_{i=1}^k Z_k(x) \right]^2 
$$
$$
= \sum_{x \in X} p(x) \cdot \text{Var}\left[\frac{1}{k} \sum_{i=1}^k Z_k(x) \right]
= \sum_{x \in X} \frac{p(x)}{k^2} \cdot \sum_{i=1}^k \text{Var}Z(x) 
$$
$$
\leq \sum_{x \in X} \frac{p(x)}{k} \cdot \mathbb{E}_Z\left[Z(x) \right]^2
$$

$$
= \sum_{x \in X} \frac{p(x)}{k} \cdot \frac{\|\widehat{f}\|^2_1}{N} \sum_{m \in X} \frac{|\widehat{f}(m)|}{\|\widehat{f}\|_1}
=
\frac{\text{FR}(f)^2 \cdot \|f\|_2^2}{N \cdot k}
\leq 
\frac{r^2 \cdot \|f\|_2^2}{N \cdot k}
<
\varepsilon^2 \frac{\|f\|_2^2}{N}
$$
if $k > \frac{r^2}{\varepsilon^2}$.  Hence, by the probabilistic method, there exists a degree $k=\frac{r^2}{\varepsilon^2} $ polynomial, $P$, such that
$$
\|f - P\|_2 < \|f\|_2
\cdot \varepsilon .
$$
\end{proof}

\fi

\vskip .25in 

\subsection{Stability of the Fourier ratio} \label{subsection:stability} We are now going to investigate how the Fourier Ratio $\text{FR}(f)$ behaves under small perturbations. 

    \begin{theorem} \label{theorem:perturbation} Let $f:\mathbb Z_N \to \mathbb C$. For any perturbation $n: {\mathbb Z}_N \to {\mathbb C}$, set $A=\|\widehat f\|_1$, $B=\|\widehat n\|_1$, $s=\|\widehat f\|_2$, $t=\|\widehat n\|_2$. If $t<s$, then
\begin{equation} \label{eq:perturbation}
\big|\text{FR}(f+n)-\text{FR}(f)\big|
\le
\frac{\|\widehat n\|_1 + \text{FR}(f)\|\widehat n\|_2}{\|\widehat f\|_2 - \|\widehat n\|_2}.
\end{equation} 
Moreover, using $\|\widehat n\|_1 \le \sqrt{N}\|\widehat n\|_2$ and $\text{FR}(f) \le \sqrt{N}$, we have
\[
\big|\text{FR}(f+n)-\text{FR}(f)\big|
\le \frac{\left(\sqrt{N}+\text{FR}(f)\right)\|\widehat n\|_2}{\|\widehat f\|_2-\|\widehat n\|_2}
\le \frac{2\sqrt{N}\|\widehat n\|_2}{\|\widehat f\|_2-\|\widehat n\|_2}.
\]
In particular, if $\|\widehat n\|_2 \le \frac12 \|\widehat f\|_2$, then
\[
\big|\text{FR}(f+n)-\text{FR}(f)\big|
\le 2\left(\frac{\|\widehat n\|_2}{\|\widehat f\|_2}\right)\left(\sqrt{N}+\text{FR}(f)\right).
\]
\end{theorem}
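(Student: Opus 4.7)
The plan is to reduce the problem to a purely algebraic estimate on ratios, then invoke the triangle inequality (twice) applied to $\widehat{f+n}=\widehat f+\widehat n$. Write $A'=\|\widehat{f+n}\|_1$ and $s'=\|\widehat{f+n}\|_2$, so that $\text{FR}(f+n)=A'/s'$ and $\text{FR}(f)=A/s$. By the triangle inequality on $L^1$ and $L^2$ (equivalently, the reverse triangle inequality) we have $|A'-A|\le B$ and $|s'-s|\le t$, and in particular $s'\ge s-t>0$ by hypothesis.

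The key identity is the standard ``split and telescope'' for a difference of quotients:
\[
\frac{A'}{s'}-\frac{A}{s}
\;=\;\frac{A's-As'}{ss'}
\;=\;\frac{s(A'-A)+A(s-s')}{ss'}
\;=\;\frac{A'-A}{s'}\;+\;\text{FR}(f)\,\frac{s-s'}{s'}.
\]
Taking absolute values, applying the two triangle-inequality bounds above, and using $s'\ge s-t$ gives
\[
\big|\text{FR}(f+n)-\text{FR}(f)\big|
\;\le\;\frac{|A'-A|+\text{FR}(f)\,|s-s'|}{s'}
\;\le\;\frac{\|\widehat n\|_1+\text{FR}(f)\,\|\widehat n\|_2}{\|\widehat f\|_2-\|\widehat n\|_2},
\]
which is the main inequality \eqref{eq:perturbation}.

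The two corollaries follow by elementary substitutions. Using $\|\widehat n\|_1\le\sqrt{N}\,\|\widehat n\|_2$ (Cauchy--Schwarz) replaces the numerator by $(\sqrt{N}+\text{FR}(f))\,\|\widehat n\|_2$, and then $\text{FR}(f)\le\sqrt{N}$, established in Subsection~\ref{subsection:aboveandbelow}, bounds this numerator by $2\sqrt{N}\,\|\widehat n\|_2$. If moreover $\|\widehat n\|_2\le\tfrac12\|\widehat f\|_2$, then $\|\widehat f\|_2-\|\widehat n\|_2\ge\tfrac12\|\widehat f\|_2$, yielding the final clean bound
\[
\big|\text{FR}(f+n)-\text{FR}(f)\big|\;\le\;2\,\frac{\|\widehat n\|_2}{\|\widehat f\|_2}\,\bigl(\sqrt{N}+\text{FR}(f)\bigr).
\]
There is no real obstacle here: the only moment requiring a bit of care is choosing the right telescoping of $A'/s'-A/s$ so that the two numerator terms involve $A'-A$ and $s-s'$ separately (rather than a cross term), which is exactly what makes the final bound clean.
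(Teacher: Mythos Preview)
Your proof is correct and essentially the same as the paper's: both rely on the triangle and reverse triangle inequalities for $\|\widehat{f+n}\|_1$ and $\|\widehat{f+n}\|_2$, followed by elementary algebra on the ratio $A'/s'-A/s$. The only cosmetic difference is that the paper bounds $\text{FR}(f+n)$ above by $(A+B)/(s-t)$ and below by $(A-B)/(s+t)$ and then handles the upper and lower deviations from $A/s$ separately, whereas your telescoping identity $\tfrac{A'}{s'}-\tfrac{A}{s}=\tfrac{A'-A}{s'}+\text{FR}(f)\tfrac{s-s'}{s'}$ handles both directions at once; the resulting bound $\tfrac{B+\text{FR}(f)t}{s-t}$ is identical.
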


\vskip.125in 

One way of interpreting Theorem \ref{theorem:perturbation} is the following. Let $\delta=\frac{{||\widehat{n}||}_2}{{||\widehat{f}||}_2}$. Then (\ref{eq:perturbation}) tells us that 
$$ |\text{FR}(f+n)-\text{FR}(f)| \leq \frac{\delta}{1-\delta} \cdot \left(\text{FR}(f)+\text{FR}(n) \right). $$

If $\frac{\delta}{1-\delta} \leq \frac{1}{\sqrt{N}}$, we obtain 
$$ |\text{FR}(f+n)-\text{FR}(f)| \leq \frac{1}{\sqrt{N}} \text{FR}(f)+1.$$

\vskip.125in 

Our next result tells us what happens if $n$ is the Gaussian noise with a given variance. 

\begin{theorem} \label{theorem:gaussperturbation}
Let $f,n:\mathbb{Z}_N \to \mathbb{C}$. Assume $n(x)$ are independent circular complex Gaussian variables with variance $\sigma^2$, and fix $\gamma \in(0,1)$. With probability at least $1-\gamma$,
\[
\left|\text{FR}(f+n)-\text{FR}(f)\right|
\le
\frac{\left(\text{FR}(n)+\text{FR}(f)\right)\,t_\gamma}{\|\widehat f\|_2 - t_\gamma}, \] where
\qquad
\[ t_\gamma=\sigma\left(\sqrt{N}+\sqrt{\log(1/\gamma)}\right).\]
\end{theorem}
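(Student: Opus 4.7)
\textbf{Proof proposal for Theorem \ref{theorem:gaussperturbation}.}
The plan is to derive this as a direct probabilistic upgrade of the deterministic bound (\ref{eq:perturbation}): all the pointwise work is already done in Theorem \ref{theorem:perturbation}, so I only need a high-probability estimate on $\|\widehat n\|_2$ for Gaussian noise. First I would rewrite the numerator of (\ref{eq:perturbation}) as
\[
\|\widehat n\|_1 + \text{FR}(f)\,\|\widehat n\|_2 \;=\; \bigl(\text{FR}(n)+\text{FR}(f)\bigr)\,\|\widehat n\|_2,
\]
using $\text{FR}(n) = \|\widehat n\|_1/\|\widehat n\|_2$. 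Then on the event $\mathcal E_\gamma := \{\|\widehat n\|_2 \le t_\gamma\}$, the numerator is at most $(\text{FR}(n)+\text{FR}(f))\,t_\gamma$ and the denominator satisfies $\|\widehat f\|_2 - \|\widehat n\|_2 \ge \|\widehat f\|_2 - t_\gamma$, so Theorem \ref{theorem:perturbation} yields
\[
|\text{FR}(f+n)-\text{FR}(f)| \;\le\; \frac{\bigl(\text{FR}(n)+\text{FR}(f)\bigr)\,t_\gamma}{\|\widehat f\|_2 - t_\gamma},
\]
which is exactly the desired conclusion (vacuously true if $t_\gamma \ge \|\widehat f\|_2$).

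The remaining step is to establish $\Pr(\mathcal E_\gamma)\ge 1-\gamma$. By Plancherel, $\|\widehat n\|_2 = \|n\|_2$. Since each coordinate $n(x)$ is a circular complex Gaussian of variance $\sigma^2$, I would write $n = (\sigma/\sqrt{2})\,Z$, where $Z\in\mathbb R^{2N}$ collects the real and imaginary parts and is a standard Gaussian vector. The map $z\mapsto \|z\|_2$ is $1$-Lipschitz on $\mathbb R^{2N}$, so $Z\mapsto \|n\|_2$ is $(\sigma/\sqrt{2})$-Lipschitz, and the Gaussian concentration (Borell--Sudakov--Tsirelson) inequality gives
\[
\Pr\bigl(\|n\|_2 > \mathbb E\|n\|_2 + s\bigr) \;\le\; \exp(-s^2/\sigma^2).
\]
Combined with $\mathbb E\|n\|_2 \le (\mathbb E\|n\|_2^2)^{1/2} = \sigma\sqrt N$ and the choice $s = \sigma\sqrt{\log(1/\gamma)}$, this yields $\|\widehat n\|_2 \le \sigma(\sqrt N + \sqrt{\log(1/\gamma)}) = t_\gamma$ with probability at least $1-\gamma$.

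The only technical subtlety I expect to watch out for is the convention that a \emph{circular complex} Gaussian of variance $\sigma^2$ has real and imaginary parts of variance $\sigma^2/2$, which is what produces the exponent $-s^2/\sigma^2$ (rather than $-s^2/(2\sigma^2)$) in the concentration bound; the rescaling $n=(\sigma/\sqrt 2)Z$ handles this cleanly. Apart from that, the proof is a one-line reduction to Theorem \ref{theorem:perturbation} combined with standard Gaussian concentration and Plancherel, with no delicate combinatorial estimates.
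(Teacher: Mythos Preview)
Your proposal is correct and follows essentially the same route as the paper: both arguments feed a high-probability bound $\|\widehat n\|_2 \le t_\gamma$ (obtained via Gaussian Lipschitz concentration after writing the circular complex Gaussian as $(\sigma/\sqrt 2)$ times a standard real Gaussian in $\mathbb R^{2N}$) into the deterministic perturbation inequality of Theorem~\ref{theorem:perturbation}, after rewriting its numerator as $(\text{FR}(n)+\text{FR}(f))\|\widehat n\|_2$. The only cosmetic differences are that the paper re-derives the deterministic bound rather than citing Theorem~\ref{theorem:perturbation}, and works with $\widehat n$ directly (using unitarity of the DFT) where you use Plancherel to pass to $\|n\|_2$.
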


\vskip.125in 

We will now examine the situation where $f_i: {\mathbb Z}_N \to {\mathbb C}$, $1 \leq i \leq n$ is a collection of time series such that $f_i(x)=s(x)+n_i(x)$, where $s(x)$ is the "true" (unknown) signal, and $n_i$ is Gaussian noise. The following well-known result shows that the average of $f_i$ is, with high probability, much closer to the true signal $s(x)$ than any of the individual $f_i$s. 

\begin{theorem}[Smoothing by averaging independent Gaussian noise] \label{theorem:smoothingeffect}
Let $f_1,\dots,f_n:\mathbb{Z}_N\to\mathbb{C}$ be given by $f_i(x)=s(x)+n_i(x)$, where $s:\mathbb{Z}_N\to\mathbb{C}$ is an unknown signal and, for each $i$ and $x$, the noise $n_i(x)$ is circular complex Gaussian with mean $0$ and variance $\sigma^2$, independent across $i$ and $x$. Define the average
\[
f(x)=\frac{1}{n}\sum_{i=1}^n f_i(x)\qquad(x\in\mathbb{Z}_N).
\]
Then:

(a) Unbiasedness and pointwise variance reduction:
\[
\mathbb{E}\,f(x)=s(x),\qquad
f(x)-s(x)\sim \mathcal{CN}\!\left(0,\frac{\sigma^2}{n}\right)
\text{ for each }x.
\]

(b) Mean squared error in $\ell^2$:
\[
\mathbb{E}\,\|f-s\|_2^2=\frac{N\sigma^2}{n},
\quad\text{where }\|h\|_2^2=\sum_{x\in\mathbb{Z}_N}|h(x)|^2.
\]

(c) High-probability bound (norm concentration): for any $\gamma\in(0,1)$, with probability at least $1-\gamma$,
\[
\|f-s\|_2\le \frac{\sigma}{\sqrt{n}}\Big(\sqrt{N}+\sqrt{\log(1/\gamma)}\Big).
\]
A slightly sharper but longer bound is
\[
\|f-s\|_2\le \frac{\sigma}{\sqrt{n}}\Big(\sqrt{N}+\sqrt{\log(1/\gamma)}+\frac{\log(1/\gamma)}{2\sqrt{N}}\Big)
\quad\text{with probability at least }1-\gamma.
\]

\end{theorem}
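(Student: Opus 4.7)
The plan is to observe that $g := f - s = \frac{1}{n}\sum_{i=1}^n n_i$ is itself a vector of independent circular complex Gaussians, so that parts (a) and (b) reduce to one-line computations and part (c) becomes a standard Gaussian concentration statement. The only bookkeeping to keep straight is the factor of $\sqrt{2}$ relating the variance $\tau^2$ of a $\mathcal{CN}(0,\tau^2)$ variable to the variance $\tau^2/2$ of each of its real and imaginary parts.

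For part (a), I would apply linearity of expectation to get $\mathbb{E} f(x)=s(x)$, and then invoke the standard stability property of circular Gaussians: if $Z_j\sim\mathcal{CN}(0,\tau_j^2)$ are independent, then $\sum_j Z_j\sim\mathcal{CN}\!\left(0,\sum_j\tau_j^2\right)$. This yields $\sum_{i=1}^n n_i(x)\sim\mathcal{CN}(0,n\sigma^2)$, and hence $f(x)-s(x)=\tfrac{1}{n}\sum_i n_i(x)\sim\mathcal{CN}(0,\sigma^2/n)$. For part (b), the defining identity $\mathbb{E}|Z|^2=\tau^2$ for $Z\sim\mathcal{CN}(0,\tau^2)$ gives $\mathbb{E}|f(x)-s(x)|^2=\sigma^2/n$ pointwise, and summing over $x\in\mathbb{Z}_N$ produces $\mathbb{E}\|f-s\|_2^2=N\sigma^2/n$.

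For part (c), I would decompose each $g(x)=u(x)+iv(x)$ with $u(x),v(x)$ independent real Gaussians of variance $\sigma^2/(2n)$, stack all $2N$ real coordinates into a single vector, and observe that in distribution $g=\tfrac{\sigma}{\sqrt{2n}}\,w$ where $w$ is a standard Gaussian vector in $\mathbb{R}^{2N}$. Consequently $\|g\|_2=\tfrac{\sigma}{\sqrt{2n}}\|w\|_2$. Since the Euclidean norm on $\mathbb{R}^{2N}$ is $1$-Lipschitz, Borell's Gaussian concentration inequality gives $\mathbb{P}(\|w\|_2\ge\mathbb{E}\|w\|_2+t)\le e^{-t^2/2}$; combining with the elementary bound $\mathbb{E}\|w\|_2\le\sqrt{2N}$ and choosing $t=\sqrt{2\log(1/\gamma)}$ produces the advertised estimate $\|g\|_2\le\tfrac{\sigma}{\sqrt{n}}\bigl(\sqrt{N}+\sqrt{\log(1/\gamma)}\bigr)$ with probability at least $1-\gamma$.

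The sharper refinement comes from the Laurent--Massart chi-squared tail $\mathbb{P}(W\ge d+2\sqrt{dt}+2t)\le e^{-t}$ applied to $W=\tfrac{2n}{\sigma^2}\|g\|_2^2\sim\chi^2_{2N}$ with $d=2N$ and $t=\log(1/\gamma)$. This yields $\|g\|_2^2\le\tfrac{\sigma^2}{n}\bigl(N+\sqrt{2N\log(1/\gamma)}+\log(1/\gamma)\bigr)$, and a direct expansion shows that $\bigl(\sqrt{N}+\sqrt{\log(1/\gamma)}+\tfrac{\log(1/\gamma)}{2\sqrt{N}}\bigr)^2$ dominates the right-hand side, giving the claimed sharper inequality upon taking square roots. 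There is no deep obstacle in any of this; the only mildly delicate point is the $\sqrt{2}$ accounting between the complex and real variance conventions, which is what determines whether the constants in the final bound come out clean.
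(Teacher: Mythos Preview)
Your proposal is correct and follows essentially the same route as the paper: Gaussian stability for (a), linearity of expectation for (b), and for (c) the real embedding $\overline n=\tfrac{\sigma}{\sqrt{2n}}w$ with $w\sim\mathcal N(0,I_{2N})$ followed by Lipschitz Gaussian concentration for the first bound and Laurent--Massart for the refinement. Your explicit tracking of the $\sqrt{2}$ factor and the verification that the squared sharper bound dominates the Laurent--Massart output are exactly the bookkeeping the paper relies on implicitly.
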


\vskip.125in 

\begin{remark} Parts (a)-(b) of Theorem \ref{theorem:smoothingeffect} are proved in \cite{KayEstimation,LehmannCasella,Wasserman} (and discussed in \cite{OppenheimSchafer,Hayes}); the concentration ingredient for part (c) is proved in \cite{Vershynin2018,BLM2013} (Lipschitz Gaussian concentration) or \cite{LaurentMassart2000,Wainwright2019} (chi–square tails).
\end{remark} 

\vskip.125in 

Our next result analyzes the relationship between the Fourier Ratio of the average $f$ and the true signal $s$. We are going to see that averaging brings the Fourier ratio toward the signal at an $\frac{1}{\sqrt{n}}$ rate. 

\begin{theorem} \label{theorem:FRofaverage}
Let $f_i:\mathbb{Z}_N\to\mathbb{C}$ be given by $f_i=s+n_i$ for $i=1,\dots,n$, where $s:\mathbb{Z}_N\to\mathbb{C}$ is fixed and the noises $n_i(x)$ are independent across $i$ and $x$, each circular complex Gaussian with mean $0$ and variance $\sigma^2$. Define the average
\[
f=\frac{1}{n}\sum_{i=1}^n f_i = s+\overline n,\qquad \overline n:=\frac{1}{n}\sum_{i=1}^n n_i.
\]

Fix $\gamma\in(0,1)$ and set $r_\gamma=\sqrt{N}+\sqrt{\log(1/\gamma)}$. Then the following hold.

\vskip.125in 

(i) With probability at least $1-\gamma$,
\[
\|\overline n\|_2 \le \frac{\sigma}{\sqrt{n}}r_\gamma,
\qquad
\|n_i\|_2 \le \sigma r_\gamma\quad\text{for each fixed }i.
\]
Consequently, if $\|s\|_2\ge 2\sigma r_\gamma$, then with probability at least $1-\gamma$,
\[
\big|\text{FR}(f)-\text{FR}(s)\big| \le \frac{2\sigma r_\gamma}{\sqrt{n}\,\|s\|_2}\big(\text{FR}(\overline n)+\text{FR}(s)\big),
\]
and for each fixed $i$,
\[
\big|\text{FR}(f_i)-\text{FR}(s)\big| \le \frac{2\sigma r_\gamma}{\|s\|_2}\big(\text{FR}(n_i)+\text{FR}(s)\big).
\]

\vskip.125in 

(ii) In particular, on the event in (i), the averaged estimate satisfies
\[
\big|\text{FR}(f)-\text{FR}(s)\big| \le \frac{1}{\sqrt{n}} C_\gamma(s,n_i,\overline n),
\]
where
\[
C_\gamma(s,n_i,\overline n)=\frac{2\sigma r_\gamma}{\|s\|_2}\big(\text{FR}(\overline n)+\text{FR}(s)\big),
\]
so the deviation of $\text{FR}(f)$ from $\text{FR}(s)$ is smaller by a factor $1/\sqrt{n}$ relative to the corresponding single-shot bound (up to the benign replacement of $\text{FR}(n_i)$ by $\text{FR}(\overline n)$, which has the same distribution because $\text{FR}(\alpha e)=\text{FR}(e)$ for all $\alpha>0$).

\end{theorem}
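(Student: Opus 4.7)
The plan is to combine the deterministic perturbation estimate in Theorem \ref{theorem:perturbation} with the Gaussian concentration ingredient already supplied by Theorem \ref{theorem:smoothingeffect}(c). The key identities are simply $\|\widehat{\overline n}\|_2=\|\overline n\|_2$, $\|\widehat{n_i}\|_2=\|n_i\|_2$ (Plancherel), and $\|\widehat{g}\|_1=\text{FR}(g)\,\|\widehat g\|_2$ for any $g$ (definition of $\text{FR}$), so the abstract perturbation bound rewrites cleanly in terms of $\text{FR}(\overline n)$ and $\text{FR}(n_i)$.

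First, I would handle the probabilistic ingredient. The pointwise noise $\overline n(x)=\frac1n\sum_i n_i(x)$ is circular complex Gaussian with variance $\sigma^2/n$ independently across $x$, so applying Theorem \ref{theorem:smoothingeffect}(c) (equivalently, Lipschitz Gaussian concentration for $\|\cdot\|_2$) to $\overline n$ with noise level $\sigma/\sqrt n$ yields $\|\overline n\|_2 \le \frac{\sigma}{\sqrt n} r_\gamma$ with probability at least $1-\gamma$. The analogous bound $\|n_i\|_2 \le \sigma r_\gamma$ for a fixed $i$ follows from the same theorem at noise level $\sigma$. (If both are desired on a single event one replaces $\gamma$ by $\gamma/2$ in the usual union-bound fashion, which affects only constants.) This establishes the first displayed inequalities in part (i).

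Next, I would plug into Theorem \ref{theorem:perturbation} with the roles of $f$ and $n$ taken to be $s$ and $\overline n$ respectively. That theorem gives
\[
\bigl|\text{FR}(s+\overline n)-\text{FR}(s)\bigr|
\le \frac{\|\widehat{\overline n}\|_1 + \text{FR}(s)\|\widehat{\overline n}\|_2}{\|\widehat s\|_2-\|\widehat{\overline n}\|_2}
= \frac{(\text{FR}(\overline n)+\text{FR}(s))\,\|\overline n\|_2}{\|s\|_2-\|\overline n\|_2},
\]
using Plancherel and the definition $\|\widehat{\overline n}\|_1=\text{FR}(\overline n)\|\overline n\|_2$. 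On the concentration event above, the hypothesis $\|s\|_2\ge 2\sigma r_\gamma$ forces $\|\overline n\|_2 \le \tfrac12\|s\|_2$, so the denominator is at least $\|s\|_2/2$. Substituting $\|\overline n\|_2 \le \sigma r_\gamma/\sqrt n$ in the numerator then produces the averaged bound in (i). The single-shot bound for $f_i$ is obtained by the identical argument with $\overline n$ replaced by $n_i$, now using $\|n_i\|_2\le\sigma r_\gamma$.

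Finally, part (ii) is purely bookkeeping: the averaged bound already carries an explicit $1/\sqrt n$ factor, while the single-shot bound does not, and since $\overline n$ is a rescaling of a Gaussian vector with the same distributional shape as $n_i$ and $\text{FR}$ is scale-invariant (as noted in Subsection \ref{subsection:notation}), $\text{FR}(\overline n)$ and $\text{FR}(n_i)$ play equivalent roles in the constants. The only non-routine point is the careful use of scale invariance of $\text{FR}$ together with Plancherel to make the $L^1$ numerator in Theorem \ref{theorem:perturbation} translate into the $\text{FR}$-factor that appears in the statement; once that rewrite is in place, the rest is a short computation. The main obstacle I anticipate is not analytic but organizational: ensuring that the concentration events for $\|\overline n\|_2$ and $\|n_i\|_2$ are coupled correctly with the deterministic bound so that the conclusion holds on a single event of probability at least $1-\gamma$, which is handled by a union bound at the mild cost of absorbing a factor of $2$ into $r_\gamma$.
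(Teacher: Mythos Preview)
Your proposal is correct and follows essentially the same route as the paper: a deterministic perturbation bound on $\text{FR}$ (the paper re-derives it inline, while you cite Theorem~\ref{theorem:perturbation} and rewrite via Plancherel and $\|\widehat g\|_1=\text{FR}(g)\|g\|_2$) combined with Gaussian norm concentration for $\|\overline n\|_2$ and $\|n_i\|_2$. Your observation about the union bound is apt but not strictly needed, since the theorem only claims each bound separately with probability $1-\gamma$.
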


\vskip.25in 

\subsection{Chang's Lemma and additive structure} \label{subsection:chang} 

Next, we consider the extent to which the ratio $\text{FR}(f)$ indicates an additive structure of the function $f$. For indicator functions, we have the following result, which shows that large spectrum sets have some additive structure.

\begin{theorem}[Sparse-spectrum approximation from small $\text{FR}(f)$]\label{thm:sparse-spectrum approximation theorem}
    Let $f:\mathbb Z_N \to \mathbb C$, and fix $\eta>0$. Define the large spectrum
    $$\Gamma := \left\{ m \in \mathbb Z_N \,:\, |\widehat f(m)| \geq \eta \|f\|_{L^2(\mu)}\right\}.$$
    Then
    $$|\Gamma| \leq \frac{\text{FR}(f)}{\eta}\sqrt N,$$ and moreover if
    $$P(x) := \frac{1}{N^\frac12} \sum_{m\in \Gamma} \widehat f(m) \chi(xm),$$ then
    $$\|f-P\|_2 \leq \eta \|f\|_2.$$
    
    In other words, $f$ can be approximated by a polynomial of degree 
    $$\frac{\text{FR}(f)}{\eta}\cdot \sqrt N$$ up to an error $\leq \eta$. 
\end{theorem}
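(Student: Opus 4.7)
The plan is to prove both assertions as direct consequences of Plancherel's identity and a Markov-type estimate on the $\ell^1$ norm of $\widehat f$. The definition of $\Gamma$ is a hard-threshold on the Fourier side, and the proposed approximant $P$ is simply the partial Fourier sum over $\Gamma$. The argument splits cleanly into (i) a cardinality bound for $\Gamma$, which is the only place where $\text{FR}(f)$ enters, and (ii) an $L^2$ error bound for the discarded tail, which follows from the threshold alone.

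For the cardinality bound, I would write
\[
|\Gamma|\cdot\eta\,\|f\|_{L^2(\mu)}
\;\le\;\sum_{m\in\Gamma}|\widehat f(m)|
\;\le\;\|\widehat f\|_1
\;=\;\text{FR}(f)\,\|\widehat f\|_2
\;=\;\text{FR}(f)\,\|f\|_2,
\]
using in order the definition of $\Gamma$, a trivial sum comparison, the definition of $\text{FR}(f)$, and Plancherel in $\ell^2$. Substituting $\|f\|_{L^2(\mu)}=N^{-1/2}\|f\|_2$ and rearranging yields $|\Gamma|\le \sqrt{N}\,\text{FR}(f)/\eta$, which is the first claim and simultaneously the bound on the degree of $P$.

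For the approximation error, observe that by Fourier inversion
\[
(f-P)(x)\;=\;\frac{1}{\sqrt N}\sum_{m\notin\Gamma}\widehat f(m)\,\chi(xm),
\]
so the Fourier coefficients of $f-P$ are exactly $\widehat f(m)\mathbf{1}_{m\notin\Gamma}$. By Plancherel,
\[
\|f-P\|_2^2 \;=\; \sum_{m\notin\Gamma}|\widehat f(m)|^2.
\]
For each $m\notin\Gamma$, the defining inequality gives $|\widehat f(m)|^2 < \eta^2\|f\|_{L^2(\mu)}^2=\eta^2\|f\|_2^2/N$. Since there are at most $N$ such indices, summing yields $\|f-P\|_2^2 \le \eta^2\|f\|_2^2$, i.e.\ $\|f-P\|_2\le\eta\|f\|_2$, as required.

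There is no genuine obstacle here: the argument is a one-line threshold decomposition combined with Plancherel, and the hypothesis $\text{FR}(f)$ small is used only to make $|\Gamma|$ small so that $P$ is a \emph{low-degree} trigonometric polynomial. A slightly sharper tail bound of the form $\|f-P\|_2 \le \bigl(\eta\,\text{FR}(f)/\sqrt N\bigr)^{1/2}\|f\|_2$ is also accessible by the interpolation $|\widehat f(m)|^2\le\eta\|f\|_{L^2(\mu)}|\widehat f(m)|$ followed by summing against $\|\widehat f\|_1=\text{FR}(f)\|f\|_2$, but the stated $\eta\|f\|_2$ bound already follows from the brute count above and is what is needed for the conclusion.
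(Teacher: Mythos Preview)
Your proof is correct and essentially identical to the paper's: both use the Markov-type estimate $|\Gamma|\cdot\eta\|f\|_{L^2(\mu)}\le\|\widehat f\|_1=\text{FR}(f)\|f\|_2$ for the cardinality bound, and Plancherel together with the pointwise threshold $|\widehat f(m)|<\eta\|f\|_2/\sqrt N$ on $\Gamma^c$ for the $L^2$ error. The only difference is cosmetic---the paper phrases the first step as ``Markov's inequality'' while you write out the chain of inequalities explicitly---and your added remark about the sharper interpolation bound is a pleasant extra.
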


\vskip.125in 

\begin{remark} Note that our bounds on $\text{FR}(f)$ show that in the best case, this approximating polynomial could be of degree $\frac{1}{\eta} \sqrt N$. While this is of worse degree than in Theorem \ref{thm:L2polynomialapprox}, in this case the approximating polynomial is deterministic. Moreover, we will see that we can ensure the Fourier support of the polynomial $P(x)$ has an additive structure by using a suitable generalization of a result due to Chang. \end{remark}

A result due to Chang (\cite{Chang02}) shows that large spectrum sets, as in Theorem \ref{thm:sparse-spectrum approximation theorem}, have additive structure in the case that $f$ is the indicator function of a set.

\begin{lemma}[Chang's lemma]{\label{lemma:changs}}    Let $A\subset \mathbb Z_N$ have density $\alpha = \frac{|A|}{N}$, and for $\eta>0$ define the large spectrum set
    $$\Gamma = \left\{ m\in\mathbb Z_N \,:\, |\widehat{1_A}(m)| \geq \eta \alpha N^\frac12 \right\}.$$
    Then there exists $\Lambda \subset \Gamma$ with
    $$|\Lambda| \leq C \eta^{-2} \log \left(\frac 1 \alpha \right)$$
    such that every $m\in \Gamma$ is a $\{-1,0,1\}$-linear combination of elements of $\Lambda$.
\end{lemma}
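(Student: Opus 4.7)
My plan is to prove the lemma by the classical \emph{maximal dissociated set} construction together with Rudin's $L^p$-inequality; the Fourier-ratio machinery developed earlier in the paper does not enter here in any essential way. Recall that $\Lambda\subset\mathbb Z_N$ is called \emph{dissociated} if the only representation $\sum_{\lambda\in\Lambda}\epsilon_\lambda\lambda=0$ with coefficients $\epsilon_\lambda\in\{-1,0,1\}$ is the trivial one. I would choose $\Lambda\subset\Gamma$ to be any maximal (with respect to inclusion) dissociated subset, and then treat the two conclusions of the lemma separately.

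The structural half of the statement is essentially free from maximality: if $m\in\Gamma\setminus\Lambda$, then $\Lambda\cup\{m\}$ is not dissociated, so there is a nontrivial $\{-1,0,1\}$-relation $\epsilon m+\sum_{\lambda\in\Lambda}\epsilon_\lambda\lambda=0$; since $\Lambda$ itself is dissociated we must have $\epsilon\neq 0$, and rearranging writes $m$ as a $\{-1,0,1\}$-linear combination of elements of $\Lambda$. So the real content of the lemma is the cardinality bound $|\Lambda|\leq C\eta^{-2}\log(1/\alpha)$, and my plan is to obtain it by duality. Choose unimodular phases $\epsilon_\lambda$ with $\overline{\epsilon_\lambda}\widehat{1_A}(\lambda)=|\widehat{1_A}(\lambda)|$, and define
$$g(x):=\sum_{\lambda\in\Lambda}\overline{\epsilon_\lambda}\,\chi(-\lambda x).$$
A short computation with the paper's Fourier normalization gives
$$\int_{\mathbb Z_N} 1_A(x)\,g(x)\,d\mu(x)=\frac{1}{\sqrt N}\sum_{\lambda\in\Lambda}|\widehat{1_A}(\lambda)|\geq |\Lambda|\,\eta\,\alpha,$$
where the inequality uses $|\widehat{1_A}(\lambda)|\geq\eta\alpha\sqrt N$ on $\Gamma$.

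On the other hand, H\"older's inequality with exponents $p,p'$ (where $1/p+1/p'=1$) combined with Rudin's inequality applied to the unit-modulus coefficients yields
$$\int 1_A\,g\,d\mu\leq\|1_A\|_{L^{p'}(\mu)}\|g\|_{L^p(\mu)}\leq \alpha^{\,1-1/p}\cdot C\sqrt{p}\,|\Lambda|^{1/2}.$$
Rearranging gives $|\Lambda|^{1/2}\leq C\sqrt{p}\,\eta^{-1}\alpha^{-1/p}$; the choice $p=\log(1/\alpha)$ makes $\alpha^{-1/p}=e$ and produces the claimed bound $|\Lambda|\leq C'\eta^{-2}\log(1/\alpha)$.

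The hard part will be Rudin's inequality itself, namely that for dissociated $\Lambda$ and $p\geq 2$ one has $\|\sum_{\lambda\in\Lambda}a_\lambda\chi(\lambda\cdot)\|_{L^p(\mu)}\leq C\sqrt{p}\,\|a\|_2$. This is a classical but nontrivial $L^p$-estimate on dissociated spectra, usually proved via a Riesz-product moment expansion that produces sub-Gaussian tails for $\sum\epsilon_\lambda\chi(\lambda\cdot)$ under random signs. In a self-contained writeup one would either include this standard proof or cite it as a black box; once it is granted, the rest of the argument is short bookkeeping of the three ingredients above: maximality for the structural claim, the Fourier identity for the lower bound on $\int 1_A g\,d\mu$, and H\"older plus Rudin for the upper bound.
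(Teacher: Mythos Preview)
Your proof is correct and follows essentially the same approach as the paper. The paper does not prove Chang's lemma separately (it is stated and attributed to \cite{Chang02}), but your argument is exactly the specialization to $f=1_A$ of the paper's proof of the Generalized Chang's Lemma (Theorem~\ref{thm: generalizedchang}): maximal dissociated $\Lambda$, the duality pairing $\int f\overline g\,d\mu$ with a test function $g$ supported on $\Lambda$, H\"older, Rudin's inequality, and the optimization $p=\log(1/\alpha)$ (which is what the paper's choice $p=\log(\|f\|_2^2N/\|f\|_1^2)$ becomes for indicators). The only cosmetic difference is that you take unimodular coefficients $\overline{\epsilon_\lambda}$ in $g$ whereas the paper uses the $\ell^2$-normalized values $\widehat f(\lambda)$; both give the same bound.
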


Lemma \ref{lemma:changs} was used by Chang originally to improve on bounds in Freiman's theorem (\cite{Chang02}), as well as by Green in \cite{Green02} to improve on a result due to Bourgain on arithmetic progressions in sumsets.

Chang's lemma suggests that large spectrum sets have some arithmetic structure, and by generalizing Chang's lemma from indicator functions to general functions $f:\mathbb Z_N\to \mathbb C$, we can show that a small $\text{FR}(f)$ implies additive structure in the large valued sets of a function. To this end, we have the following result.

\begin{theorem}[Generalized Chang's Lemma]{\label{thm: generalizedchang}}
    Let $f:\mathbb{Z}_N\to\mathbb{C}$, and for $\eta>0$ define the large spectrum set
    $$\Gamma=\left\{ m\in\mathbb Z_N \,:\, |\widehat{f}(m)| \geq \eta\|f\|_{L^2(\mu)}\right\}.$$
    Then there exists a constant $C$ and some $\Lambda\subset\Gamma$ such that every $m\in \Gamma$ is a $\{-1,0,1\}$-linear combination of elements of $\Lambda$, and moreover, \begin{align}\label{eq:generalizedchang lognorm}
        |\Lambda|&\leq C\eta^{-2}\left(\frac{\|f\|_\frac{\log N}{\log N-1}}{\|f\|_2}\right)^2\log N.
    \end{align}
    If, additionally, $\frac{\|f \|_1}{\|f\|_2} \leq \frac{1}{e}\sqrt N$, then we have
    \begin{align}\label{eq:generalizedchang l2l1}
        |\Lambda|&\leq C\eta^{-2}\frac{\|f\|_1^2}{\|f\|_2^2}\log\left(\left(\frac{\|f\|_2}{\|f\|_1}\right)^2N\right).
    \end{align}
\end{theorem}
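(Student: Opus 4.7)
The plan is to adapt the standard proof of Chang's lemma, replacing the indicator-function-specific argument with a duality/interpolation argument that tracks $\|f\|_q$ instead of the density $\alpha$. First, I take $\Lambda\subset\Gamma$ to be a maximal dissociated subset, meaning that no nontrivial $\{-1,0,1\}$-combination of distinct elements of $\Lambda$ vanishes in $\mathbb{Z}_N$; the usual maximality argument shows that every $m\in\Gamma$ is then a $\{-1,0,1\}$-linear combination of elements of $\Lambda$, so it suffices to bound $|\Lambda|$.

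For the core estimate I set $\phi_\lambda=\overline{\widehat f(\lambda)}/|\widehat f(\lambda)|$ and form the dual test function
\[
g(x)=\sum_{\lambda\in\Lambda}\phi_\lambda\,\chi(\lambda x).
\]
Parseval and the defining inequality for $\Gamma$ give the lower bound
\[
\langle f,g\rangle_{L^2(\mu)}=\tfrac{1}{\sqrt N}\sum_{\lambda\in\Lambda}|\widehat f(\lambda)|\;\ge\;\tfrac{\eta}{\sqrt N}\,\|f\|_{L^2(\mu)}\,|\Lambda|.
\]
On the dual side, H\"older combined with Rudin's inequality for dissociated sets (the key place dissociativity of $\Lambda$ is used) yields
\[
\langle f,g\rangle_{L^2(\mu)}\le \|f\|_{L^q(\mu)}\|g\|_{L^p(\mu)}\le C\sqrt p\,\|f\|_{L^q(\mu)}\,|\Lambda|^{1/2}
\]
for any conjugate pair $1/p+1/q=1$ with $p\ge 2$. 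Combining the two bounds and converting to $\ell^r$ norms via $\|f\|_{L^r(\mu)}=N^{-1/r}\|f\|_r$ produces the master inequality
\[
|\Lambda|\;\le\;C^2\,p\,\eta^{-2}\,N^{\,2-2/q}\,\frac{\|f\|_q^2}{\|f\|_2^2},\qquad p\ge 2.
\]

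The two bounds now follow from the two natural choices of $p$. For the first bound I take $p=\log N$, whence $q=\log N/(\log N-1)$ and $N^{2-2/q}=N^{2/\log N}=e^2$ is absorbed into the constant. For the second bound I first apply the log-convexity interpolation $\|f\|_q\le \|f\|_1^{2/q-1}\|f\|_2^{2-2/q}$, valid for $q\in[1,2]$, to rewrite the master inequality as
\[
|\Lambda|\;\le\;C^2\,\eta^{-2}\bigl(\|f\|_1/\|f\|_2\bigr)^{2}\,p\,M^{2/p},\qquad M:=N\bigl(\|f\|_2/\|f\|_1\bigr)^{2}\ge 1,
\]
using $1/p+1/q=1$ to collapse $N^{2-2/q}\,\|f\|_q^2/\|f\|_2^2$ into the compact form $(\|f\|_1/\|f\|_2)^2\,M^{2/p}$. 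Elementary calculus minimizes $p\,M^{2/p}$ at $p=2\log M$, where it takes the value $2e\log M$; the hypothesis $\|f\|_1/\|f\|_2\le \sqrt{N}/e$ exactly guarantees $M\ge e^2$, so that $p=2\log M\ge 4\ge 2$ is an admissible choice, producing the claimed bound with the sharper logarithm $\log M$.

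The main delicate point is the second optimization rather than the invocation of Rudin's inequality itself: the naive choice $p=\log N$ already controls $|\Lambda|$, but only by $C\eta^{-2}(\|f\|_1/\|f\|_2)^2\log N$, which misses the self-improving logarithm $\log(N(\|f\|_2/\|f\|_1)^2)$. Obtaining this Chang-type refinement requires noticing that after interpolation the $p$-dependence factors as $p\,M^{2/p}$, and that this expression is minimized at $p=2\log M$; everything else --- the maximality/dissociativity construction of $\Lambda$, the duality with the test function $g$, H\"older, and the interpolation step --- is essentially forced by Chang's original scheme and is routine.
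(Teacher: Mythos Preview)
Your argument is correct and follows essentially the same route as the paper: a maximal dissociated subset $\Lambda$, a Rudin-polynomial test function supported on $\Lambda$, H\"older plus Rudin's inequality, and then optimizing in $p$ (the paper uses Jensen where you use log-convexity interpolation, and takes $p=\log M$ rather than $p=2\log M$, but the outcomes agree up to constants). One cosmetic slip: with the standard sesquilinear $L^2(\mu)$ pairing your phase choice should be $\phi_\lambda=\widehat f(\lambda)/|\widehat f(\lambda)|$ rather than its conjugate, so that $\overline{\phi_\lambda}\,\widehat f(\lambda)=|\widehat f(\lambda)|$; otherwise the displayed identity for $\langle f,g\rangle_{L^2(\mu)}$ does not hold as written.
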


\begin{remark}
    Applying Theorem \ref{thm: generalizedchang} to $\widehat{f}$, whenever $\text{FR}(f) \leq \frac{1}{e}\sqrt N$ we obtain
    \begin{align*}
    |\Lambda|\leq C\eta^{-2}\text{FR}(f)^2\log\left(\text{FR}(f)^{-2}N\right),
    \end{align*}from (\ref{eq:generalizedchang l2l1}), where $\Lambda$ is a set such that every\begin{align*}
        x\in\Gamma\coloneq\left\{x\in\mathbb{Z}_N:|f(x)|\geq\eta\|f\|_{L^2(\mu)}\right\}
    \end{align*} is a $\{-1,0,1\}$-linear combination of elements of $\Lambda$. Since the $\text{FR}(f)$ term outside the $\log$ dominates, this implies that a small $\text{FR}(f)$ means a small $\Lambda$, pointing towards additive structure in the large valued set of $f$. Alternatively, for large $N$, (\ref{eq:generalizedchang lognorm}) approaches\begin{align*}
        |\Lambda|\leq C'\eta^{-2}\text{FR}(f)^2\log N,
    \end{align*} 
    where $C'= Ce^{-2}$, giving the same result.
\end{remark}

\vskip.25in

\vskip.25in 

\section{Numerical experiments} \label{section:numerical}

\vskip.125in 

In this section, we describe numerical experiments to estimate the Fourier ratio for reasonable data sets and to obtain numerical evidence about Talagrand's constant. 

\subsection{Fourier ratio for real-life data sets}  

We will illustrate the utility of the ratio $\text{FR}(f)$ on real-life data sets. The plots of the data sets are in Figure \ref{fig:real_processes} below. The results are as follows: 
\begin{itemize}
    \item Peyton Manning Wikipedia Visits: \(\text{FR}(f) = 1.917\)
    \item Electric Production: \(\text{FR}(f) = 2.133\)
    \item Delhi Daily Climate: \(\text{FR}(f) = 2.715\)
    \item Australia Monthly Beer Production: \(\text{FR}(f) = 2.884\)
\end{itemize}

\begin{figure}[htbp]
\centering
\includegraphics[scale=0.5]{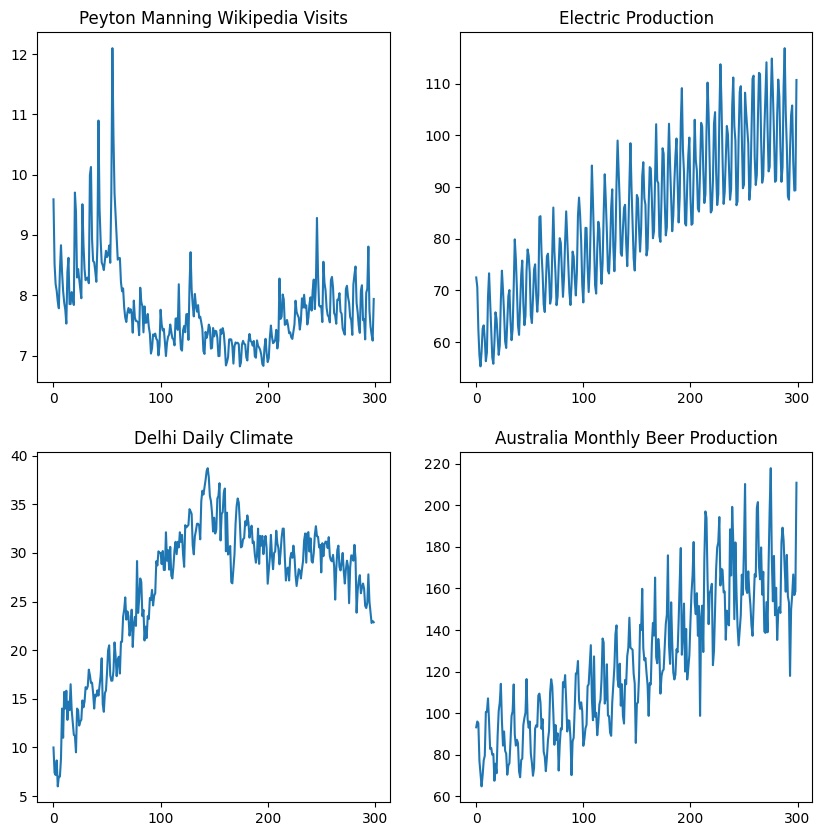}
\caption{Four real-world datasets: plots show raw values (not Fourier ratios), so the 
$y$-axis is the observed series value.}
\label{fig:real_processes}
\end{figure}

These data sets are unrelated to one another and are dominated by event-driven fluctuations, like football game times and global weather patterns. The fact that their Fourier ratios are all relatively close to the minimal value of $1$ gives heuristic credibility to the definition of randomness detection studied in this paper.  

\vskip.125in 

\subsection{Talagrand's Constant} 

In this section, we are exploring the constants $C_T$ (Talagrand's Constant) and $C(q)$ (Bourgain's Constant). In lieu of Theorem \ref{thm: combo}, there exists $\gamma_0 \in (0,1)$ such that if $h: {\mathbb Z}_N \to {\mathbb C}$ is supported in a generic set $M$ (in the sense of Definition \ref{def:generic}) of size $\gamma_0 \frac{N}{\log(N)}$, then with probability $1-o_N(1)$, 
\begin{equation} \label{eq: talagrand} 
{\left( \frac{1}{N}  \sum_{m \in {\mathbb Z}_N} {|\widehat{h}(m)|}^2 \right)}^{\frac{1}{2}} \leq C_T {(\log(N) \log \log(N))}^{\frac{1}{2}} \cdot \frac{1}{N} \sum_{m \in {\mathbb Z}_N} |\widehat{h}(m)|, 
\end{equation} 
where $C_T > 0$ is a constant that depends only on $\gamma_0$.  

In lieu of Remark \ref{rmk:nolog}, when $|M|=O(N^{1-\varepsilon})$ for some $\varepsilon>0,$ we may replace $C_T {(\log(N) \log \log(N))}^{\frac{1}{2}}$ with just $C'_T.$ So in this case, we will take Talagrand's constant to be $C'_T.$ We will write this as $C_T$ for the remainder of this section, as we will let $|M|=O(N^{\frac{2}{q}})$ so $\varepsilon=\frac{q-2}{q}$ for some $q>2.$ We will now write \eqref{eq: talagrand} as 
\begin{equation}\label{eq: talagrandprime}
    {\left( \frac{1}{N}  \sum_{m \in {\mathbb Z}_N} {|\widehat{h}(m)|}^2 \right)}^{\frac{1}{2}} \leq C_T \frac{1}{N} \sum_{m \in {\mathbb Z}_N} |\widehat{h}(m)|.
\end{equation}

Now we look at Theorem \ref{thm:bourgain}. Suppose that $M$ is generic, as above, $|M|= \lceil N^{\frac{2}{q}} \rceil$, $q>2$. Then for all $f: {\mathbb Z}_N \to {\mathbb C}$ supported in $M$, 
\begin{equation} \label{eq:bourgain} 
{\left( \frac{1}{N} \sum_{m \in {\mathbb Z}_N} {|\widehat{f}(m)|}^q \right)}^{\frac{1}{q}} \leq C(q) \cdot {\left( \frac{1}{N} \sum_{m \in {\mathbb Z}_N} {|\widehat{f}(m)|}^2 \right)}^{\frac{1}{2}}. 
\end{equation} 

Looking at Lemma \ref{lemma:vershynintrick}, we apply H\"older's inequality to \eqref{eq:bourgain} to see that 
\[
 {\left( \frac{1}{N} \sum_{x \in {\mathbb Z}_N} {|\widehat{f}(x)|}^2 \right)}^{\frac{1}{2}} \leq {(C(q))}^{\frac{q}{q-2}} \cdot \frac{1}{N} \sum_{x \in {\mathbb Z}_N} |\widehat{f}(x)|.
\]

Combining the above, we see that $C_T\leq C(q)^{\frac{q}{q-2}}.$ 
Recalling the notation
\begin{equation} \label{eq:probnorm}
    {\|g\|}_{L^p(\mu)}={\left( \frac{1}{N} \sum_{x \in {\mathbb Z}_N} {|g(x)|}^p \right)}^{\frac{1}{p}},
\end{equation}
we get that \eqref{eq: talagrandprime} becomes 
\begin{equation}\label{eq: talagrandprimenorm}
    {\|\widehat{h}\|}_{L^2(\mu)}\le C_T\|\widehat{h}\|_{L^1(\mu)}
\end{equation}
and \eqref{eq:bourgain} turns into 
\begin{equation}\label{eq: bourgainnorm}
     {\|\widehat{f}\|}_{L^q(\mu)}\le C(q)\|\widehat{f}\|_{L^2(\mu)}.
\end{equation}

Now, we want to perform numerical experiments to compute \(\frac{\|\widehat{f}\|_{L^q(\mu)}}{\|\widehat{f}\|_{L^2(\mu)}}\) and \(\frac{\|\widehat{f}\|_{L^2(\mu)}}{\|\widehat{f}\|_{L^1(\mu)}}\) so we may investigate whether $C_T\approx C(q)^{\frac{q}{q-2}}$ or whether $C_T$ is in fact much smaller. 

All of the following algorithms were implemented and tested in Python. We will also be computing the ratio of the norms using indicator functions on the random sets $M$. The values obtained are not for general functions $f.$ Instead, we will compute them when $f=\widecheck{1}_M.$ As these are random signals, there is expected to be some variance between the data for varying trials. In lieu of this, exact results will not be possible for evaluating these constants, but to counter this, we used large values of $N$ and one million trials. Although computationally expensive, this allowed us to obtain reasonably stable estimates. In all these algorithms, we computed the ratios with random sizes $M$. We took the $90$th percentile of all these trials as the estimate for the constants to avoid the far outliers. As for the Bourgain constant, the algorithm will compute the ratio $\frac{\|\widehat{f}\|_{L^q(\mu)}}{\|\widehat{f}\|_{L^2(\mu)}}$ and then raise this value to the $\frac{q}{q-2}$ power so we may compare $C_T$ and $C(q)^{\frac{q}{q-2}}.$ 

\subsection{Computing Talagrand/Bourgain Constants}

Here, we present 7 numerical experiments which pin down estimates for $C(q)$ and $C_T$.

In the first algorithm, we first sought to see the relationship between $C(q)^{\frac{q}{q-2}}$ and $q$. We fixed $N=100000$ and let $q\in[3,4].$ Letting $q\rightarrow 2$ would make the exponent blow up, so we avoided computing the constant close to $q=2.$

%PICTURE ONE
\begin{figure}[H]
    \centering
    \includegraphics[width=0.5\linewidth]{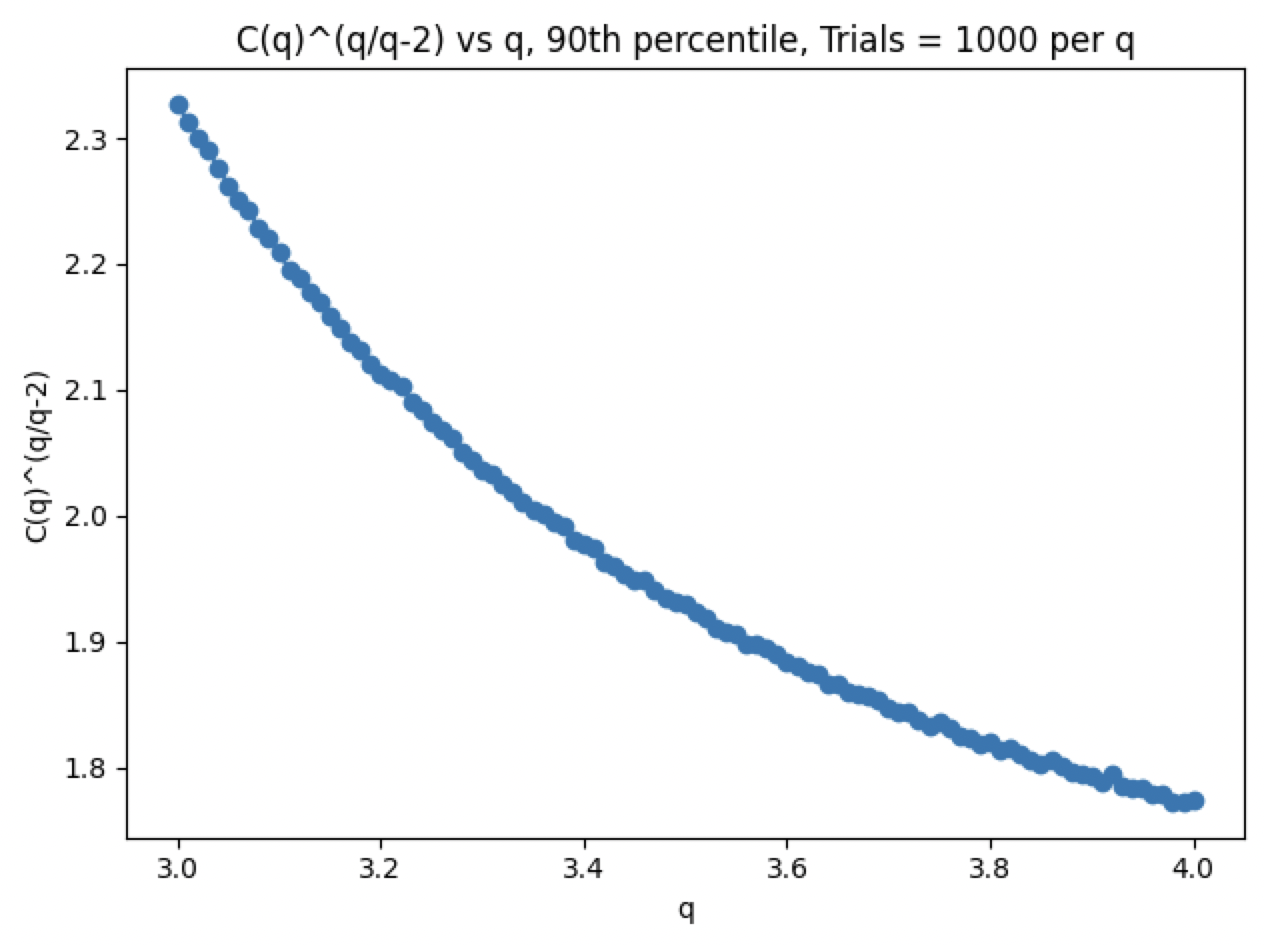}
    \caption{$C(q)^{\frac{q}{q-2}}$ vs $q$}
    \label{fig:c(q)expvsq}
\end{figure}

In this second algorithm, we sought to perform a sanity check to see that Bourgain's constant does not depend on $N.$ We fixed $q=4$ and let $N\in[10,100000].$

%%PICTURE 2
\begin{figure}[H]
    \centering
    \includegraphics[width=0.5\linewidth]{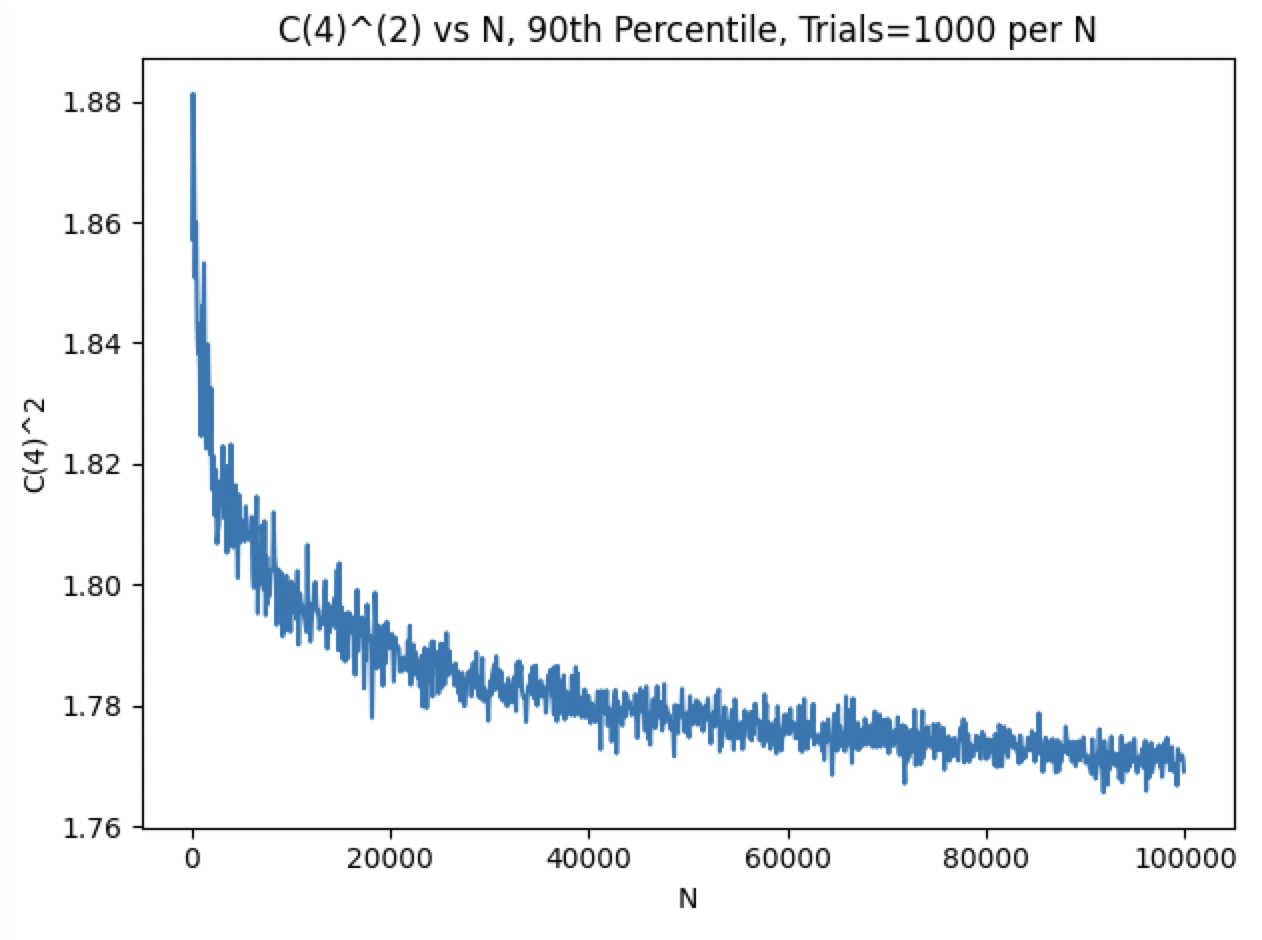}
    \caption{$C(q)^{\frac{q}{q-2}}$ vs $N$}
    \label{fig:C(q)expvsN}
\end{figure}

In this third algorithm, we created a heatmap for varying $q$ and $N$ to see how the two variables affect $C(q)^{\frac{q}{q-2}}.$

%%PICTURE 3
\begin{figure}[H]
    \centering
    \includegraphics[width=0.5\linewidth]{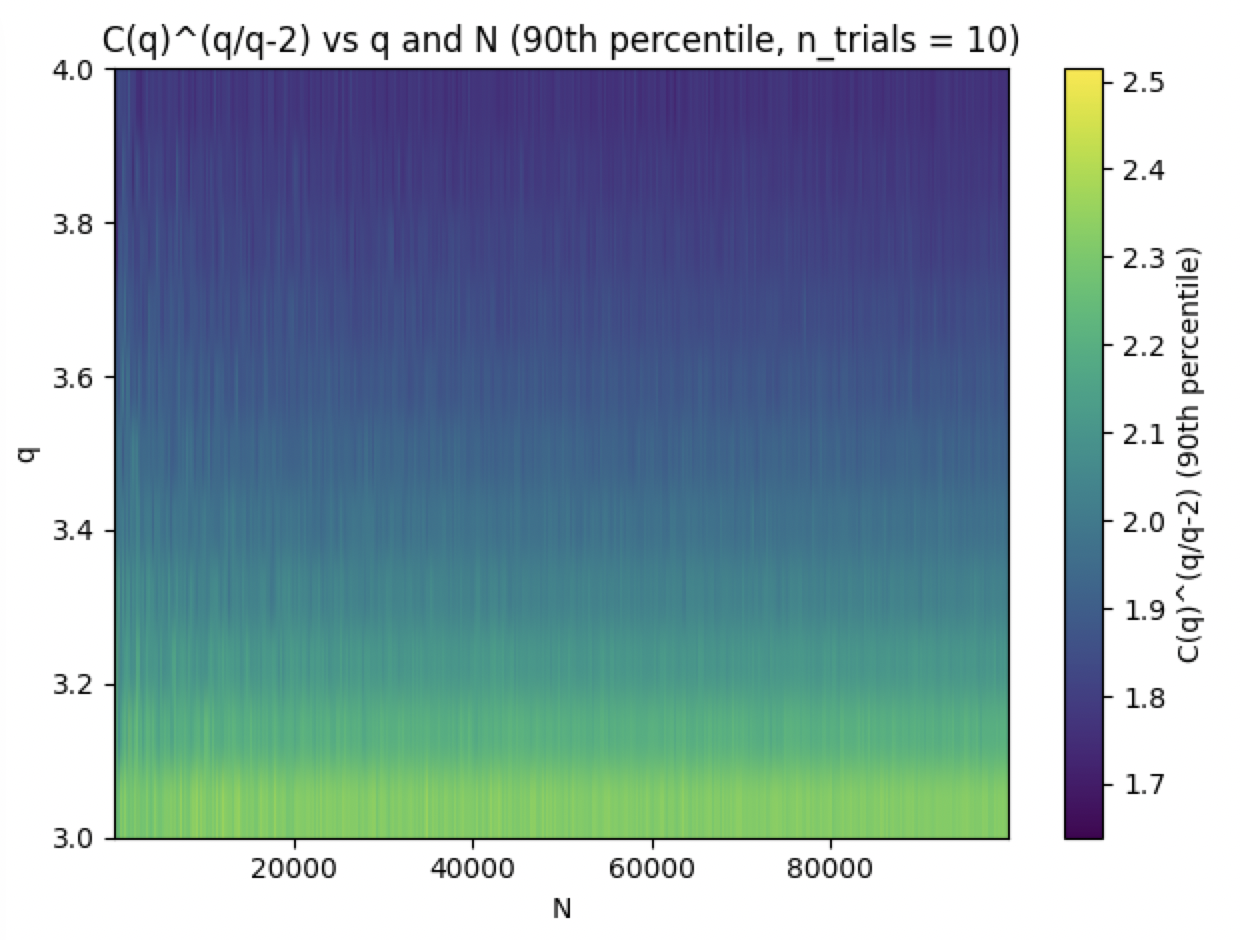}
    \caption{$C(q)^{\frac{q}{q-2}}$ vs $q$ and $N$}
    \label{fig:C(q)heat}
\end{figure}

In the fourth algorithm, we first sought to see a relationship between $C_T$ and $q$. We fixed $N=100000$ and let $q\in[3,4]$ to match up with Bourgain's constant.  

%PICTURE 4
\begin{figure}[H]
    \centering
    \includegraphics[width=0.5\linewidth]{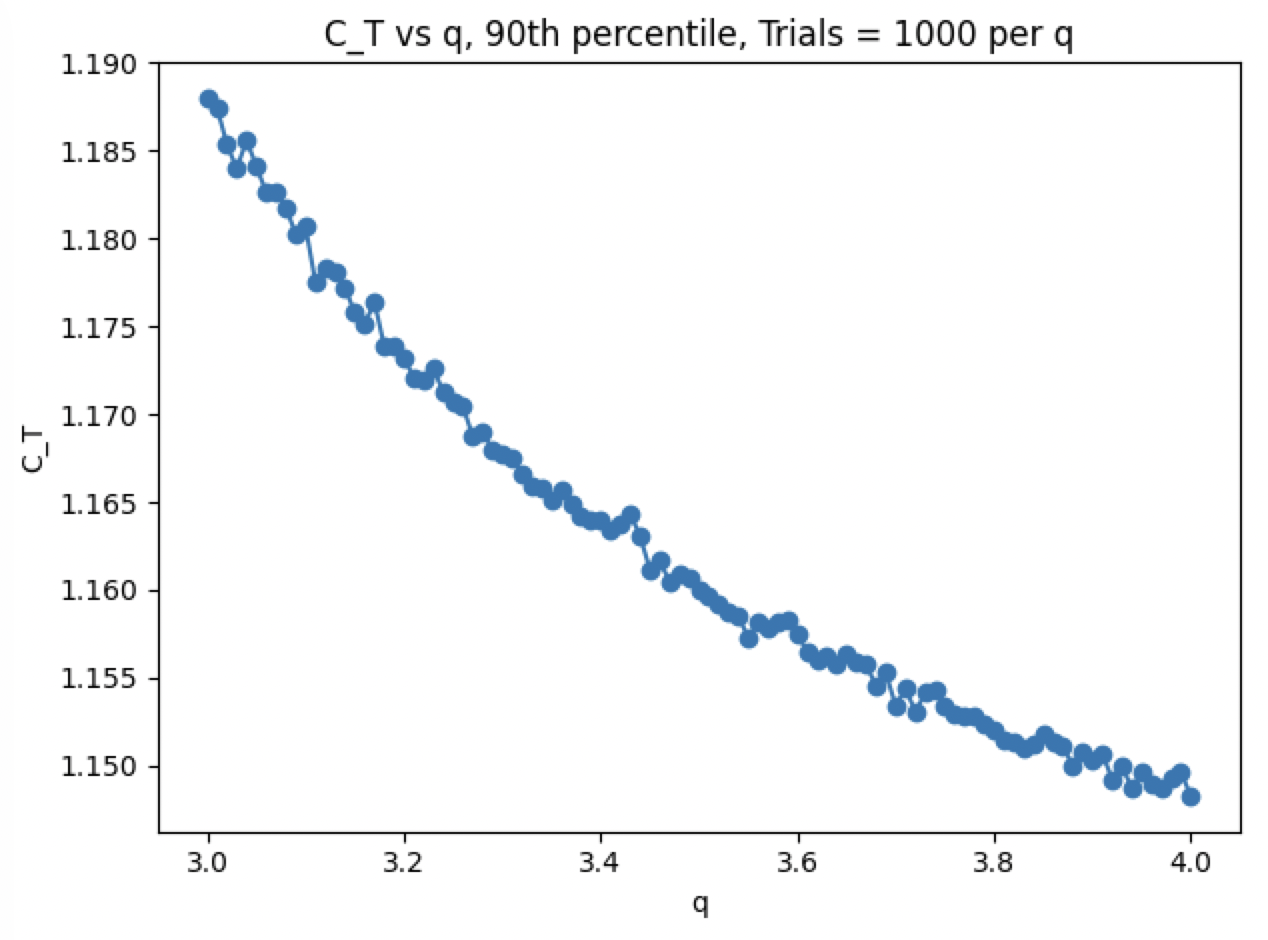}
    \caption{$C_T$ vs $q$}
    \label{fig:Ctvsq}
\end{figure}

In this fifth algorithm, we sought to see a relationship between $C_T$ and $N$. We fixed $q=4$ and let $N\in[10,100000]$.

%%PICTURE 5
\begin{figure}[H]
    \centering
    \includegraphics[width=0.5\linewidth]{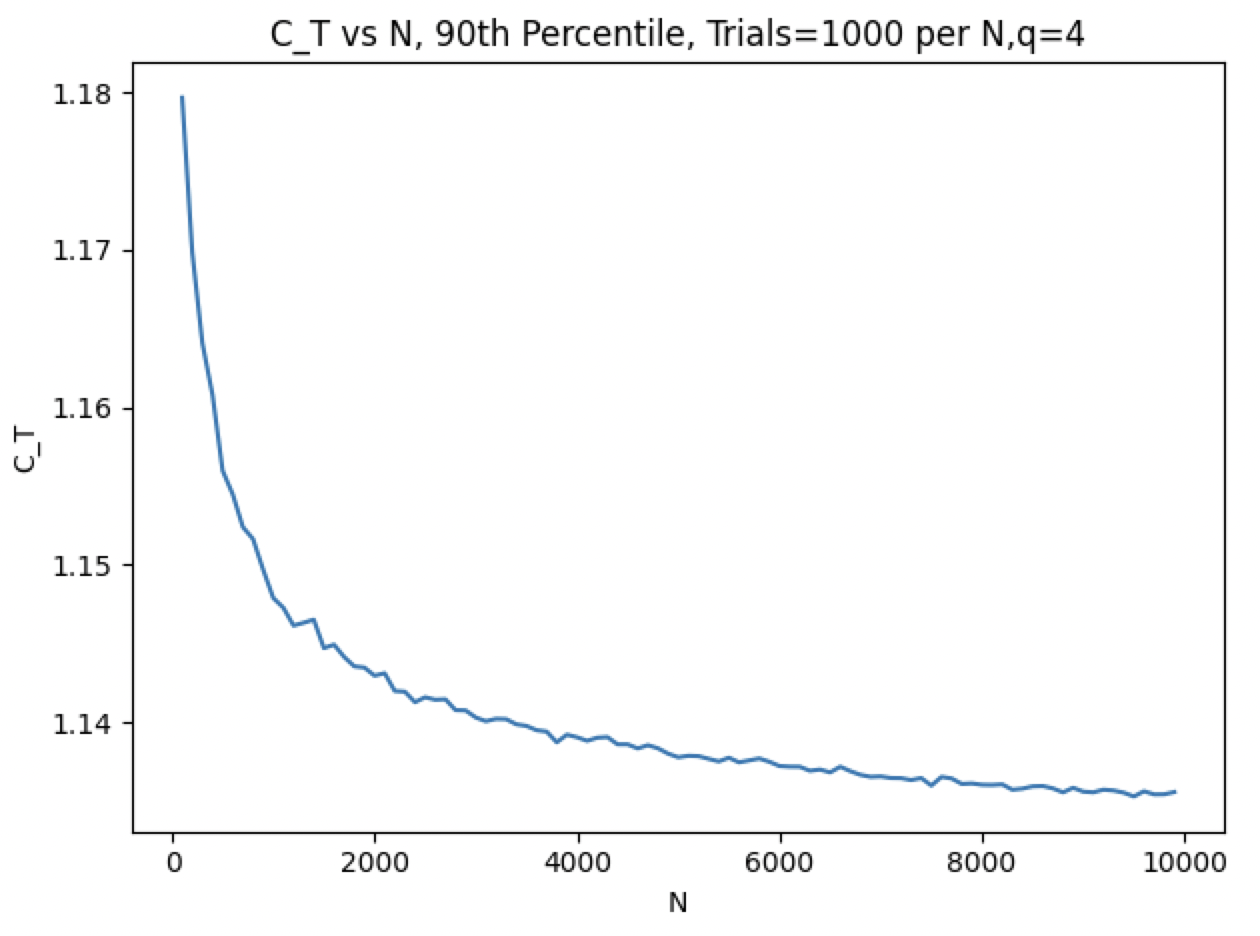}
    \caption{$C_T$ vs $N$}
    \label{fig:CtvsN}
\end{figure}

In this sixth algorithm, we created a heatmap for varying $q$ and $N$ and see how the two variables affect $C_T$. 

%%PICTURE 6
\begin{figure}[H]
    \centering
    \includegraphics[width=0.5\linewidth]{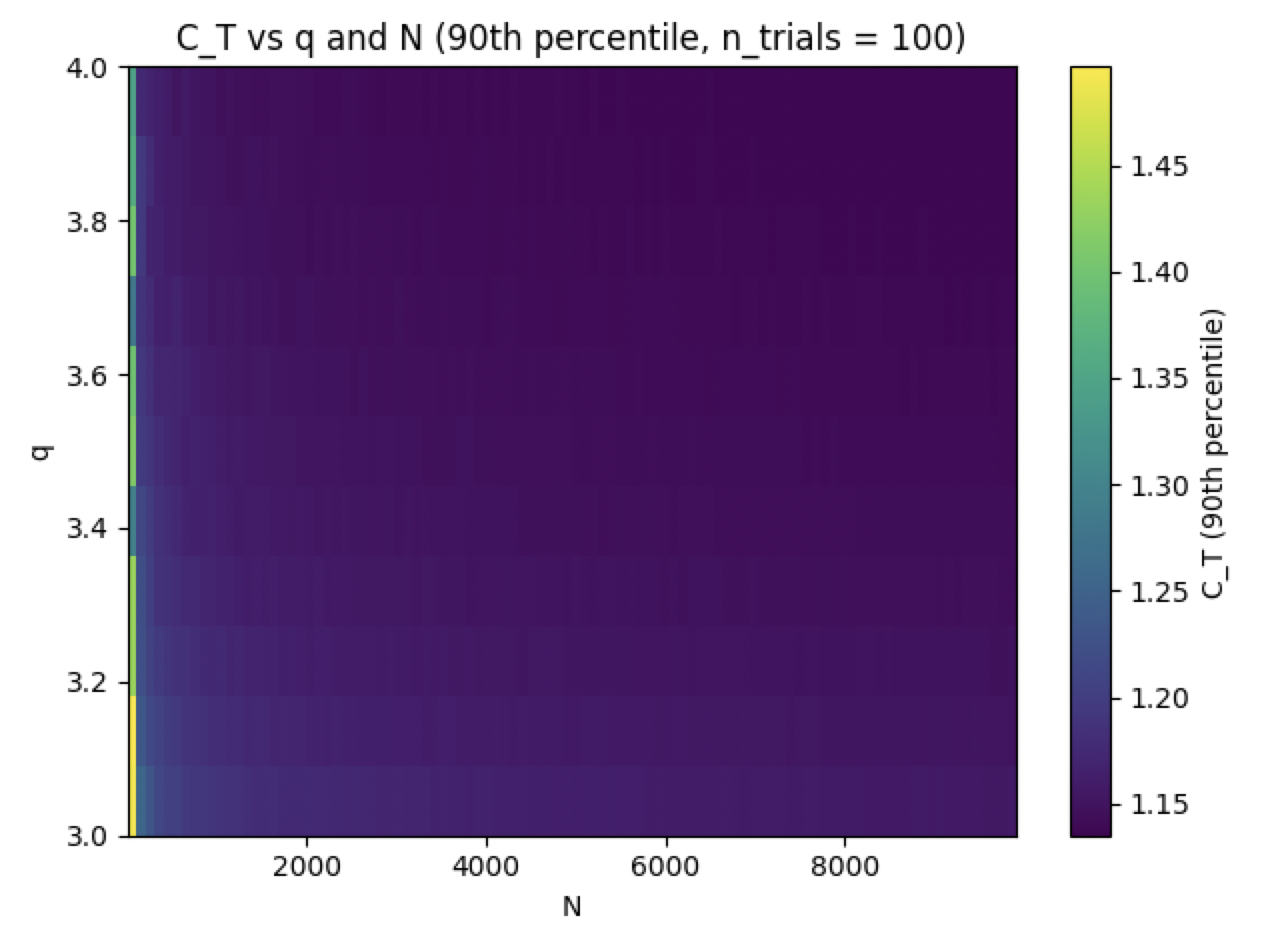}
    \caption{$C_T$ vs $q$ and $N$}
    \label{fig:CtvsqvsN}
\end{figure}

In this final algorithm, we have a graph comparing the values of $C(q)^{\frac{q}{q-2}}$ and $C_T$ varying both $q$ and $N$. 

%% PICTURE 7
\begin{figure}[H]
    \centering
\includegraphics[width=0.5\linewidth]{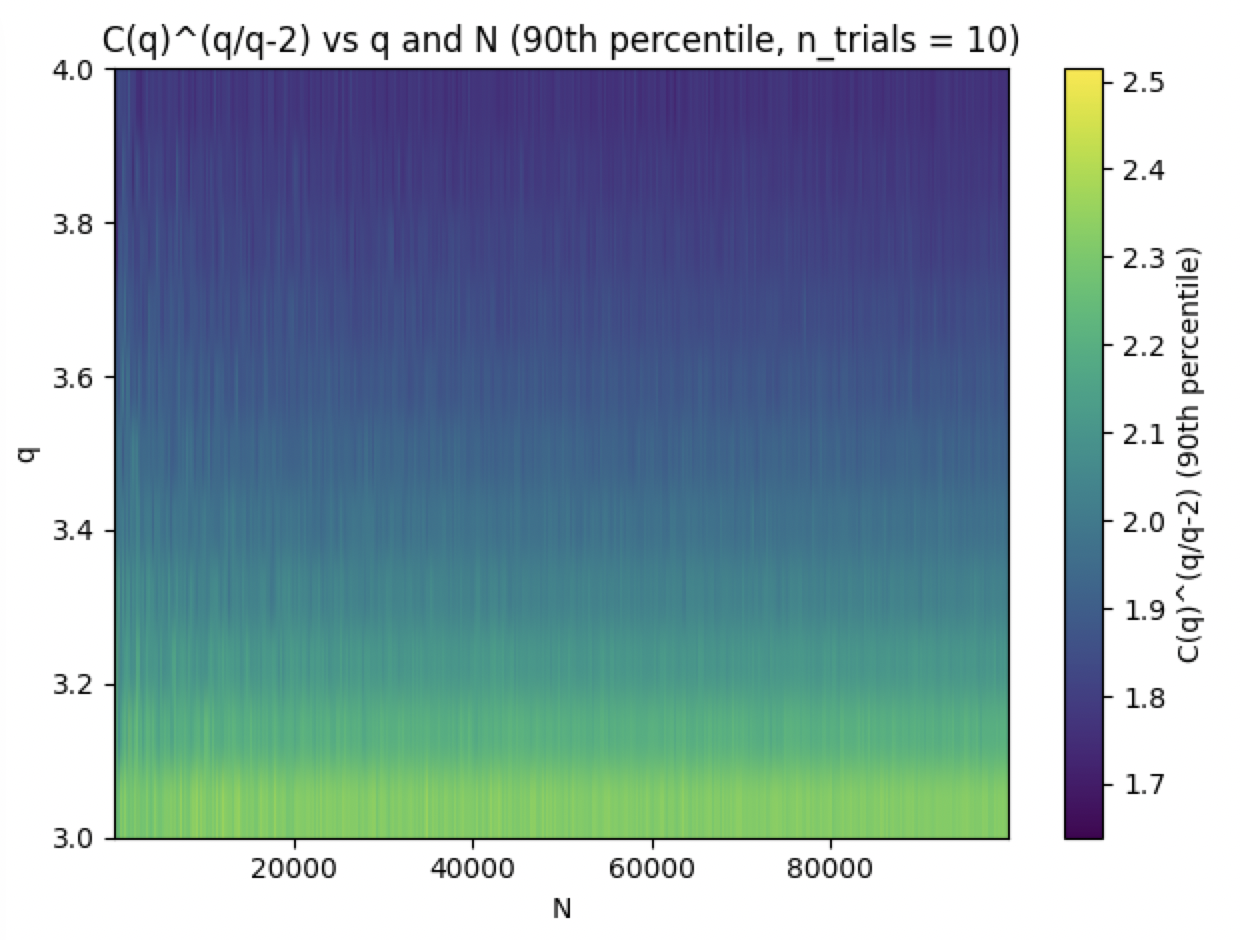}
    \caption{$(C(q))^\frac{q}{q-2}$ vs $q$ and $N$}
    \label{fig:C(q)vsqvsN}
\end{figure}

From the numerical experiments with Talagrand's constant and Bourgain's constant, we observe that 
\(C_T\le (C(q))^{\frac{q}{q-2}}.\) Numerically, we see that \(C_T\) is approximated by \(1-1.2\) whereas \((C(q))^{\frac{q}{q-2}}\) is approximated by \(1.5-2.5.\) This suggests that Talagrand's Constant behaves like a universal constant: the $N$ and $q$ dependence is negligible in our range of parameters, and its value can be reasonably approximated (up to a modest factor) by \((C(q))^{\frac{q}{q-2}}.\)

\section{Proofs of the main results}

\vskip.125in 

\subsection{Proof of Theorem \ref{theorem:FRuncertainty}}

 By the assumption, $$ \|f\|_{L^2(E^c)} \leq a \|f\|_2.$$
 It follows that 
$$ \|f\|_{L^2(E^c)} \leq a \left(\|f\|_{L^2(E)}+ \|f\|_{L^2(E^c)}\right).$$

 Combining we see that $$\|f\|_{L^2(E^c)} \leq \frac{a}{1-a} \|f\|_{L^2(E)}.$$

From this we conclude that $$ \|f\|_2\leq \|f\|_{L^2(E)}+ \|f\|_{L^2(E^c)} \leq \frac{1}{1-a} \|f\|_ {L^2(E)}.$$

The right hand side above is bounded by $$ \frac{1}{1-a} \|f\|_{\infty} \cdot {|E|}^{\frac{1}{2}},$$ which is bounded by $$ \frac{1}{1-a} \cdot N^{-\frac{1}{2}} \cdot {|E|}^{\frac{1}{2}} \cdot \|{\widehat{f}}\|_1.$$

It follows that $$ {(\text{FR}(f))}^2 \ge {(1-a)}^2 \cdot \frac{N}{|E|}.$$

On the other hand, by the assumption, $$\|{\widehat{f}}\|_{L^1(S^c)} \leq b \|{\widehat{f}}\|_1,$$ and  the same argument as above,  we have
% $$ |{\widehat{f}}|{L^2(S^c)} \leq \frac{b}{1-b} |{\widehat{f}}|_1,$$ 

% hence 

$$\|{\widehat{f}}\|_1 \leq \frac{1}{1-b} \|{\widehat{f}}\|_{L^1(S)}.$$

By H\"older inequality and the Plancherel identity, we write: 
$$
\|{\widehat{f}}\|_{L^1(S)} \leq |S|^{1/2} \|\widehat f\|_2 = |S|^{1/2} \|f\|_{2}
$$

Combining this, it follows that $$ {(\text{FR}(f))}^2 \leq \frac{|S|}{{(1-b)}^2}.$$ 
 This completes the proof of the theorem.  

\iffalse 
Since $f$ is $L^2$-concentrated on $E \subset {\mathbb Z}_N$ at level $a$ ($a\in (0,1)$), we have 
% $$ {||f||}_{L^2(E^c)} \leq \frac{a}{\sqrt{1-a^2}} {||f||}_{L^2(E)} \leq 
% \frac{a}{1-a} {||f||}_{L^2(E)}, \ \text{hence}$$ 
% $$ {||f||}_2 \leq \frac{1}{1-a} {||f||}_{L^2(E)} \leq \frac{1}{1-a} \cdot {|E|}^{\frac{1}{2}} \cdot {||f||}_{\infty} \qquad \text{for} ~~ a\in (0,1)$$
% $$ \leq  \frac{1}{1-a} \cdot {|E|}^{\frac{1}{2}} \cdot N^{-\frac{1}{2}} \cdot {||\widehat{f}||}_1.$$ 
\begin{align}\notag
\|f\|_{L^2(E^c)}
&\le \frac{a}{\sqrt{1-a^2}}\,\|f\|_{L^2(E)}
 \le \frac{a}{1-a}\,\|f\|_{L^2(E)}, \qquad a\in(0,1), \\ \notag
\|f\|_2
&\le  
\frac{\sqrt{1+a^2}}{(1-a)}\,\|f\|_{L^2(E)}\le
\frac{2}{1-a}\,\|f\|_{L^2(E)}
 \le \frac{2}{1-a}\,|E|^{1/2}\,\|f\|_\infty \\ \notag
&\le \frac{2}{1-a}\,|E|^{1/2}\,N^{-1/2}\,\|\widehat{f}\|_1.
\end{align}

It follows that 
\begin{equation} \label{eq:lower} \frac{(1-a)}{2} \cdot \sqrt{\frac{N}{|E|}} \leq \frac{\|\widehat f\|_1}{\|f\|_2}  =  \text{FR}(f). \end{equation} 

Since $\widehat{f}$ is $L^1$-concentrated on $S$ at level $b$, the same argument as above yields 
$$ {||\widehat{f}||}_1 \leq \frac{1}{1-b} \cdot {||\widehat{f}||}_{L^1(S)} \leq  \frac{1}{1-b} \cdot {|S|}^{\frac{1}{2}} \cdot {||\widehat{f}||}_2.$$ 

Dividing both sides by ${||\widehat{f}||}_2$ yields 
\begin{equation} \label{eq:upper} \text{FR}(f)\leq \frac{\sqrt{|S|}}{1-b}. 
\end{equation} 

Combining (\ref{eq:lower}) and (\ref{eq:upper}) yields the conclusion of the theorem, i.e., 
 
$$ |S|\, |E| \geq \frac{N}{4} (1-b)^2 (1-a)^2. 
$$
\fi 
\vskip.125in 

\subsection{Proof of Theorem \ref{thm:relation-between-L1-L2-concentration}}

Throughout this proof, we will assume that $b$ is chosen such that $||\widehat{f}||_{L^2(S)} = (1-b^2) \cdot ||\widehat{f}||_{2}$, since if this fails, we can replace $b$ by a smaller quantity for which this equation holds. 
    \\ \\
    \iffalse
    \textcolor{red}{If $|S|/N = \theta^2$ and $|T|/N = \eta^2$ and $(b')^2 = 1- (1-b^2)\cdot\eta^2/\theta^2$, then we have
    \begin{align*}
    \frac{(b')^2(1-\eta^2)}{(1-(b')^2)\cdot\eta^2} &= \frac{\left(1-(1-b^2)\cdot\tfrac{\eta^2}{\theta^2}\right) \cdot (1-\eta^2)}{(1-b^2) \cdot \tfrac{\eta^2}{\theta^2}\cdot \eta^2} \\
    &= \frac{\left(\tfrac{\theta^2}{\eta^2}-(1-b^2)\right) \cdot \left(\tfrac{\theta^2}{\eta^2}-\theta^2\right)}{(1-b^2)\cdot \theta^2} \\
    &\geq \frac{b^2(1-\theta^2)}{(1-b^2)\cdot \theta^2},
    \end{align*}
    since $\theta/\eta \geq 1$. This shows that if the bound in the proposition holds for $T$ and $b'$, }
    \fi
    Since $f$ is an indicator function, note that $|\widehat{f}(m)| \leq \sqrt{N}$ for all $m$. (This is the only time we use the assumption that $f$ is an indicator function). Define the sets
    \[E_0 := \{m \in \mathbb{Z}_N: |\widehat{f}(m)| \leq 1 \}\]
    and for every $1 \leq k \leq \log_2(N)/2$,
    \[E_k := \{m\in \mathbb{Z}_N: 2^{k-1} < |\widehat{f}(m)| \leq 2^k\}.\]
    Since $||\widehat{f}||_2^2 = \sum_k \sum_{m\in E_k} |\widehat{f}(m)|^2$, we have
    \[\frac{1}{4} \cdot \sum_k |E_k|\cdot 2^{2k} \leq ||\widehat{f}||_2^2 \leq \sum_k |E_k|\cdot 2^{2k}.\]
    Similarly, $||f||_{L^2(S)}^2  = \sum_k \sum_{m\in E_k\cap S} |\widehat{f}(m)|^2$, so
    \[\frac{1}{4} \cdot \sum_k |E_k\cap S|\cdot 2^{2k} \leq ||\widehat{f}||_{L^2(S)}^2 \leq \sum_k |E_k\cap S|\cdot 2^{2k}.\]
    We will say an index $k$ is concentrated in the set $S$ if
    %    \[|E_k\cap S| \geq (1-b^2) |E_k|.\]
    \[ ||\widehat{f}||^2_{L^2(E_k\cap S)} \geq (1-b^2) \cdot ||\widehat{f}||_{L^2(E_k)}^2.\]
    Since $\widehat{f}$ is $L^2$-concentrated in
    $S$ with accuracy $b$, and $S$ is the minimal set with this property, we can obtain the following lemma. 
    \begin{lemma} \label{lem:choosing-dyadic-scale-i}
    It is possible to choose an index $i$ which is concentrated in the set $S$ such that
    \begin{equation} \label{i1}
    %\frac{1}{4} \cdot |E_i|\cdot 2^{2i} \leq 
    %|E_i|\cdot 2^{2i} \geq 
    ||\widehat{f}||^2_{L^2(E_i)} \geq \frac{1}{\log(N)}||\widehat{f}||_2^2,
    \end{equation}
    and
    \begin{equation}
        \label{eq-E_i-not-too-small}
        |E_i\cap S| \geq \frac{1}{\log N}\cdot |S|. 
    \end{equation}
    \end{lemma}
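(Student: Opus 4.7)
The plan is a dyadic pigeonhole over the at most $K := \lfloor \tfrac{1}{2}\log_2 N\rfloor +1 = O(\log N)$ scales $E_k$ (using that $f$ is an indicator, so $|\widehat{f}(m)| \leq \sqrt{N}$), combined with the minimality of $S$, so that all three conclusions follow from a single well-chosen index.

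First I would pigeonhole the identity
\[
(1-b^2)\|\widehat{f}\|_2^2 \;=\; \|\widehat{f}\|^2_{L^2(S)} \;=\; \sum_{k=0}^{K} \|\widehat{f}\|^2_{L^2(E_k \cap S)}
\]
to extract an index $i$ with $\|\widehat{f}\|^2_{L^2(E_i \cap S)} \geq (1-b^2)\|\widehat{f}\|_2^2/K$. Inequality \eqref{i1} then follows immediately from $\|\widehat{f}\|^2_{L^2(E_i)} \geq \|\widehat{f}\|^2_{L^2(E_i \cap S)}$, absorbing the factor $(1-b^2)$ into the constant implicit in the $1/\log N$ bound. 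For \eqref{eq-E_i-not-too-small} I would apply the minimality hypothesis to $T := E_i \cap S \subsetneq S$, which gives $\|\widehat{f}\|^2_{L^2(E_i \cap S)} < (1-b^2)\|\widehat{f}\|_2^2 \cdot |E_i \cap S|/|S|$; dividing by the pigeonhole lower bound just obtained rearranges to $|E_i \cap S|/|S| > 1/K$.

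The delicate part is ensuring that $i$ is concentrated in $S$ in the sense $\|\widehat{f}\|^2_{L^2(E_i \cap S)} \geq (1-b^2)\|\widehat{f}\|^2_{L^2(E_i)}$, since the naive pigeonhole pins down the mass on $S$ at scale $i$ but does not control the complementary mass on $E_i \cap S^c$. My intended fix is to first restrict the pigeonhole to the subfamily
\[
P := \Bigl\{ k : \|\widehat{f}\|^2_{L^2(E_k \cap S)} \geq (1-b^2)\|\widehat{f}\|^2_{L^2(E_k)}\Bigr\}
\]
of already concentrated scales and argue that $P$ carries enough of the mass $\|\widehat{f}\|^2_{L^2(S)}$ that the pigeonhole still produces an index satisfying \eqref{i1}. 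The key leverage is that minimality forces $|\widehat{f}|^2$ to be roughly uniform on $S$, so each scale's contribution to $\|\widehat{f}\|^2_{L^2(E_k \cap S)}$ is proportional to $|E_k\cap S|/|S|$, and the dyadic constraint that values in $E_k$ lie within a factor of two of one another then forces most of the $S$-mass to be carried by concentrated scales.

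The hardest part will be executing this last step cleanly while keeping constants sharp enough to be absorbed into the $\log N$ factor in the lemma's statement. A safe fallback is to allow the concentration level to degrade from $(1-b^2)$ to $c(1-b^2)$ for a universal constant $c<1$ at intermediate stages, and absorb the resulting constant loss into the final bound, which is harmless because the lemma's conclusion tolerates constants inside the $\log N$.
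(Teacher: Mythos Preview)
Your overall architecture—restrict to the set $P$ of concentrated scales, pigeonhole inside $P$, then invoke minimality for \eqref{eq-E_i-not-too-small}—is exactly the paper's. Your derivation of \eqref{eq-E_i-not-too-small} by a single application of the minimality hypothesis to $T=E_i\cap S$ is in fact cleaner than the paper's, which instead argues by contradiction over the entire set $J$ of indices achieving the pigeonhole bound.

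Where you diverge is at the step you correctly flag as delicate: showing that $P$ carries a fixed fraction of the mass so that the pigeonhole inside $P$ still yields \eqref{i1}. The paper does \emph{not} use minimality here. It sets $B_1=\bigcup_{k\in P}E_k$, $B_2=\bigcup_{k\notin P}E_k$ and argues by contradiction from the global identity $\|\widehat f\|_{L^2(S^c)}^2=b^2\|\widehat f\|_2^2$ alone: if $B_2$ carried more than half the total mass, then (since each non-concentrated scale has $S^c$-fraction exceeding $b^2$ while each concentrated scale has $S^c$-fraction at most $b^2$) a weighted-average computation forces $\|\widehat f\|_{L^2(S^c)}^2>b^2\|\widehat f\|_2^2$. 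Your proposed mechanism—``minimality forces $|\widehat f|^2$ roughly uniform on $S$, so most $S$-mass sits in concentrated scales''—is a genuinely different idea, and as written it is not a proof: the minimality hypothesis only gives an \emph{upper} bound on the $L^2$-density of $\widehat f$ over proper subsets of $S$, not two-sided uniformity, and you have not indicated how to convert that into a lower bound on $\sum_{k\in P}\|\widehat f\|_{L^2(E_k\cap S)}^2$.

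One smaller issue: in your opening paragraph you propose to ``absorb the factor $(1-b^2)$ into the constant implicit in the $1/\log N$ bound.'' The lemma as stated has no implicit constant, and more importantly $(1-b^2)$ is not universal—it can be arbitrarily small as $b\to 1$. This problem disappears once you pass to the restricted pigeonhole over $P$, provided you can establish $\sum_{k\in P}\|\widehat f\|_{L^2(E_k)}^2\ge c\,\|\widehat f\|_2^2$ with $c$ independent of $b$; that is precisely what the paper's contradiction argument is set up to deliver (with $c=\tfrac12$).
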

    \noindent The proof of the lemma is given after the completing the proof of the proposition.
    Let $i$ be the index given by the above lemma, then
    \begin{equation} \label{eq:dyadic-scale-i-dominates-L^2(S)}
    %\frac{1}{4} \cdot |E_i\cap S|\cdot 2^{2i} \leq 
       |E_i|\cdot2^{2i} \geq  \frac{1}{\log(N)} \cdot \frac{1}{1-b^2} \cdot ||\widehat{f}||_{L^2(S)}^2.
    \end{equation}
    Next we will consider $||\widehat{f}||_{1}$ and $||\widehat{f}||_{L^1(S)}$, which are estimated by:
    \[\frac{1}{2} \sum_{k} |E_k|\cdot 2^k \leq ||\widehat{f}||_1 \leq \sum_{k} |E_k|\cdot 2^k\]
    and
    \[\frac{1}{2} \sum_{k} |E_k\cap S|\cdot 2^k \leq ||\widehat{f}||_{L^1(S} \leq \sum_{k} |E_k\cap S|\cdot 2^k.\]
    Use the pigeonhole principle to choose two indices $j$ and $\ell$, where $0 \leq j,\ell \leq \log(N)/2$, as follows:
    \[|E_j\cap S|\cdot 2^j \geq \frac{1}{\log(N)/2}\cdot\sum_{k} |E_k\cap S|\cdot 2^k.\]
    and
    \[|E_\ell\cap S^C|\cdot 2^\ell \geq \frac{1}{\log(N)/2}\cdot \sum_{k} |E_k\cap S^C|\cdot 2^k.\]
    Rearranging the above two estimates gives:
    \begin{equation}\label{eq-j-L^1-S-estimate}
    \frac{1}{2} |E_j\cap S| \cdot 2^j  \leq ||\widehat{f}||_{L^1(S)} \leq \frac{\log(N)}{2} \cdot |E_j\cap S| \cdot 2^j\end{equation}
    and
    \begin{equation}\label{eq-j-l-L^1-estimate}
    \frac{1}{2} \left(|E_j\cap S| \cdot 2^j + |E_\ell\cap S^C| \cdot 2^\ell \right) \leq ||\widehat{f}||_1 \leq \frac{\log(N)}{2} \cdot \left(|E_j\cap S| \cdot 2^j + |E_\ell\cap S^C| \cdot 2^\ell \right).\end{equation}
    We are now in a position to compare the ratio $||\widehat{f}||_{L^1(S)}/||\widehat{f}||_{L^2(S)}$ with $\text{FR}(f)$. On the one hand, using $(\ref{i1})$ and $(\ref{eq-j-l-L^1-estimate})$,
    \begin{equation} \label{FR-dyadic-estimate}
    \frac{1}{\sqrt{\log(N)}} \cdot\frac{|E_j\cap S| \cdot 2^j + |E_\ell\cap S^C| \cdot 2^\ell}{|E_i|^{1/2}\cdot 2^{i}} \leq \text{FR}(f) \leq \frac{\log(N)}{2} \cdot\frac{|E_j\cap S| \cdot 2^j + |E_\ell\cap S^C| \cdot 2^\ell}{|E_i|^{1/2}\cdot 2^{i}}
    \end{equation}
    On the other hand, using $(\ref{eq:dyadic-scale-i-dominates-L^2(S)})$ and $(\ref{eq-j-L^1-S-estimate})$,
    \begin{equation} \label{FR-on-S-dyadic-estimate}
    \frac{1}{\sqrt{\log(N)}}\cdot \frac{1}{\sqrt{1-b^2}} \cdot\frac{|E_j\cap S| \cdot 2^j}{|E_i|^{1/2}\cdot 2^{i}} \leq \frac{||\widehat{f}||_{L^1(S)}}{||\widehat{f}||_{L^2(S)}} \leq \frac{\log(N)}{2} \cdot \frac{1}{\sqrt{1-b^2}} \cdot\frac{|E_j\cap S| \cdot 2^j}{|E_i|^{1/2}\cdot 2^{i}}
    \end{equation}
    Using $(\ref{FR-dyadic-estimate})$ and $(\ref{FR-on-S-dyadic-estimate})$, we can write
    \begin{equation}
        \label{}
        \frac{||\widehat{f}||_{L^1(S)}}{||\widehat{f}||_{L^2(S)}} \geq \frac{2}{(\log(N))^{3/2}} \cdot \frac{1}{\sqrt{1-b^2}}\cdot \frac{|E_j\cap S| \cdot 2^j}{|E_j\cap S| \cdot 2^j + |E_\ell\cap S^C| \cdot 2^\ell} \cdot \text{FR}(f)
    \end{equation}
    To complete the proof, we need to show that
    \[
    \frac{|E_j\cap S| \cdot 2^j}{|E_j\cap S| \cdot 2^j + |E_\ell\cap S^C| \cdot 2^\ell} \geq \frac{1}{1 + (\log N)^{3/2}\cdot\frac{b \cdot |S^C|^{1/2}}{\sqrt{1-b^2} \cdot |S|^{1/2}}},
    \]
    which is equivalent to showing that
    \begin{equation} \label{eq-theta-expression}
    \frac{|E_\ell\cap S^C|\cdot 2^\ell}{|E_j\cap S|\cdot 2^j} \leq (\log N)^{3/2} \cdot \frac{b\cdot |S^C|^{1/2}}{\sqrt{1-b^2}\cdot |S|^{1/2}}.
    \end{equation}
    To show this, first apply H\"older's inequality:
    \[||\widehat{f}||_{L^1(S^C)} \leq |S^C|^{1/2} \cdot ||\widehat{f}||_{L^2(S^C)} = |S^C|^{1/2} \cdot b \cdot ||\widehat{f}||_2.\]
    Using $(\ref{eq:dyadic-scale-i-dominates-L^2(S)})$ with this estimate, we get
    \begin{equation} \label{eq-El-holder}
    |E_\ell\cap S^C|\cdot 2^\ell \leq |S^C|^{1/2} \cdot b \cdot  \left(\log(N) \cdot |E_i|^{1/2}\cdot 2^i \right).
    \end{equation}
    Next we will show that
    \begin{equation} \label{eq-Ej-reverse-holder}
    |E_j\cap S|\cdot 2^j \geq \frac{1}{(\log N)^{1/2}} \cdot|S|^{1/2} \cdot  \sqrt{1-b^2}\cdot\left( |E_i|^{1/2}\cdot 2^i \right).
    \end{equation}
    After $(\ref{eq-Ej-reverse-holder})$ is shown, combining it with $(\ref{eq-El-holder})$ gives $(\ref{eq-theta-expression})$. To show $(\ref{eq-Ej-reverse-holder})$, we first observe that
    \begin{equation}
    \label{eq-j>i-in-L^1}
    |E_j\cap S|\cdot 2^j \geq |E_i\cap S|\cdot 2^i,
    \end{equation}
    because if this is not true, we could replace $j$ with $i$. This observation, combined with $(\ref{eq-E_i-not-too-small})$, leads to the following inequality: 
    \[|E_j\cap S|\cdot 2^j \geq |E_i\cap S|^{1/2} \cdot |E_i\cap S|^{1/2}\cdot 2^i \geq \frac{|S|^{1/2}}{(\log N)^{1/2}} \cdot \sqrt{1-b^2} \cdot |E_i|^{1/2}\cdot 2^i.\]
    This gives $(\ref{eq-Ej-reverse-holder})$, which completes the proof of the proposition, except for the proof of Lemma 1 which is below.

\begin{proof}[Proof of Lemma \ref{lem:choosing-dyadic-scale-i}]
        Let $I$ be the set of indices $k$, where $0 \leq k \leq \log(N)/2$, which are concentrated in the set $S$. Define two sets $B_1$ and $B_2$ by:
        \[B_1 = \bigcup_{k\in I} E_k \text{ and } B_2 = \bigcup_{k\in I^C} E_k.\]
        We first claim that
        \begin{equation}
            \label{eq-scales-concentrated in-S-dominate}
            ||\widehat{f}||^2_{L^2(B_1)} \geq \frac{1}{2} \cdot ||\widehat{f}||_{L^2}^2.
        \end{equation}
        Indeed, if $(\ref{eq-scales-concentrated in-S-dominate})$ does not hold, then there exists a number $\varepsilon_1 > 0$ such that
        \[||\widehat{f}||^2_{L^2(B_1)} = \left(\frac{1}{2} -\varepsilon_1\right)\cdot ||\widehat{f}|^2_2 \text{ and } ||\widehat{f}||^2_{L^2(B_2)} = \left(\frac{1}{2} +\varepsilon_1\right)\cdot ||\widehat{f}|^2_2.\]
        From the definition of $I$, we know that $\widehat{f}\cdot1_{B_1}$ is concentrated in $S$ with accuracy (less than) $b$, thus, there exists $\varepsilon_2 > 0$ such that
        \[||\widehat{f}||^2_{L^2(B_1\cap S^C)} = (b^2 - \varepsilon_2) \cdot ||\widehat{f}|^2_{L^2(B_1)} 
        \text{ and } 
        ||\widehat{f}||^2_{L^2(B_2\cap S^C)} = (b^2 + \varepsilon_2) \cdot ||\widehat{f}|^2_{L^2(B_2)}.\]
        Consequently,
        \begin{align*}
           ||\widehat{f}||_{L^2(S^C)}^2 
            & =  ||\widehat{f}||^2_{L^2(B_1\cap S^C)} +||\widehat{f}||^2_{L^2(B_2\cap S^C)} \\
            &= (b^2 - \varepsilon_2) \cdot ||\widehat{f}|^2_{L^2(B_1)} + (b^2 + \varepsilon_2) \cdot ||\widehat{f}|^2_{L^2(B_2)} \\
            &= (b^2 - \varepsilon_2) \cdot \left(\frac{1}{2} -\varepsilon_1\right)\cdot ||\widehat{f}|^2_2 + (b^2 + \varepsilon_2) \cdot \left(\frac{1}{2} +\varepsilon_1\right)\cdot ||\widehat{f}|^2_2 \\
            & > b^2\cdot ||\widehat{f}||^2_2,
        \end{align*}
        which is impossible, since $f$ is concentrated in $S$ with accuracy $b$. This proves $(\ref{eq-scales-concentrated in-S-dominate})$.
        We now apply the pigeonhole principle to $(\ref{eq-scales-concentrated in-S-dominate})$ to see that there exists some $i \in I$ such that
        \[
        %\label{eq-dyadic-scale-i-dominates-L^2}
        ||\widehat{f}||^2_{L^2(E_i)} \geq \frac{1}{|I|} \sum_{k
        \in I} ||\widehat{f}||^2_{L^2(E_k)} = \frac{1}{|I|}||\widehat{f}||^2_{L^2(B_1)} \geq \frac{1}{2|I|} \cdot ||\widehat{f}||_{L^2}^2.
        \]
        Let $J$ be the set of all indices in $k\in I$ for which above inequality is true. Since the above inequality implies $(\ref{i1})$, it remains to show that there is some index $i \in J$ which also satisfies $(\ref{eq-E_i-not-too-small})$. 
        To see this, suppose there exists no such index in $J$. Then, for every $k \in J$, we have
        \[|E_k \cap S| < \frac{|S|}{\log(N)} \;\;\; \text{ and thus } \;\;\;  \frac{||\widehat{f}||_{L^2(E_k \cap S)}^2}{|E_k\cap S|} > \frac{\log(N)}{2|I|\cdot|S|} \cdot (1-b^2)\cdot||\widehat{f}||_2^2.\]
        Define $B_3 = \bigcup_{k\in J} E_k$. Then
        \begin{align*}
            ||\widehat{f}||_{L^2(B_3 \cap S)}^2 &= \sum_{k\in J} ||\widehat{f}||_{L^2(E_k \cap S)}^2 \\
            &> \frac{\log(N)}{2|I|\cdot|S|} \cdot (1-b^2)\cdot||\widehat{f}||_2^2\cdot\sum_{k\in J} |E_j\cap S| .\\
            &\geq \frac{\log(N)}{2|I|} \cdot (1-b^2)\cdot||\widehat{f}||_2^2\cdot \frac{|B_3\cap S|}{|S|} .\\
            &\geq (1-b^2)\cdot ||\widehat{f}||_2^2\cdot \frac{|B_3\cap S|}{|S|}.
        \end{align*}
        This contradicts the hypothesis in the theorem statement that $S$ is minimal with respect to the $L^2$-concentration property.
        This completes the proof of the lemma.
        \end{proof}

\subsection{Proof of Theorem \ref{thm:concentration}} We have 
$$ {\|\widehat{f}\|}_2 \leq {\|\widehat{1_Mf}\|}_2+{\|\widehat{1_{M^c}f}\|}_2.$$

By Talagrand, 
\begin{align}\notag {\|\widehat{1_Mf}\|}_2 &=N^{\frac{1}{2}} {\|\widehat{1_Mf}\|}_{L^2(\mu)}
\\\notag
&\leq C_T \sqrt{N} \sqrt{\log(N) \log \log(N)} {\|\widehat{1_Mf}\|}_{L^1(\mu)} 
\\\notag
& = C_T N^{-\frac{1}{2}} \sqrt{\log(N) \log \log(N)} {\|\widehat{1_Mf}\|}_1. 
\end{align}

By the concentration assumption, 
$${\|\widehat{1_{M^c}f}\|}_2 \leq r {\|f\|}_2. $$

It follows that 
$$ {\|\widehat{f}\|}_2 (1-r) \leq C_T N^{-\frac{1}{2}} \sqrt{\log(N) \log \log(N)} {\|\widehat{1_Mf}\|}_1, $$ and we conclude that 
$$ {\|\widehat{f}\|}_2 \leq \frac{C_T N^{-\frac{1}{2}} \sqrt{\log(N) \log \log(N)}}{1-r}{\|\widehat{1_Mf}\|}_1.$$

We must now unravel ${\|\widehat{1_Mf}\|}_1$. This expression equals 
\begin{align*}
    {\|\widehat{f}-\widehat{1_{M^c}f}\|}_1 &\leq {\|\widehat{f}\|}_1+{\|\widehat{1_{M^c}}f\|}_1 \\
    &\leq {\|\widehat{f}\|}_1 +rN^{\frac{1}{2}} {\|\widehat{f}\|}_2.
\end{align*}

It follows that 
$$ {\|\widehat{f}\|}_2 \cdot \left(1-\frac{C_T r \sqrt{\log(N) \log \log(N)}}{1-r} \right) \leq \frac{C_T N^{-\frac{1}{2}} \sqrt{\log(N) \log \log(N)}}{1-r} {\|\widehat{f}\|}_1.$$

We conclude that 
$$ \frac{{\|\widehat{f}\|}_{L^1(\mu)}}{{\|\widehat{f}\|}_{L^2(\mu)}} \ge \frac{\left(1-r\frac{C_T \sqrt{\log(N) \log \log(N)}}{1-r} \right)}{\frac{C_T \sqrt{\log(N) \log \log(N)}}{1-r}}, $$ as claimed. \qed

\vskip.125in 

\subsection{Proof of Theorem \ref{thm:Linftypolynomialapprox}} 
    The proof is probabilistic, so suppose $f:\mathbb Z_N\to \mathbb C$ and define the random function $Z:\mathbb Z_N\to \mathbb C$, where for each $m$,
    $$Z(x)=\|\widehat f\|_1\operatorname{sgn}(\widehat{f}(m))N^{-\frac{1}{2}}\chi(mx)$$
    with probability $\frac{|\widehat{f}(m)|}{\|\widehat{f}\|_1}$, where $\operatorname{sgn}(z) = \frac{z}{|z|}$. Observe that the expected value of $Z$ is just our function $f$.
    
    Let $Z_1,\dots, Z_k$ be i.i.d. random functions with the same distribution as $Z$. Note that for each $i$, we have
    $$|Z_i(x)| = N^{-\frac{1}{2}} \|\widehat f\|_1,$$
    and thus
    \begin{gather}
        -N^{-\frac12}\|\widehat f\|_1 \leq \operatorname{Re}(Z_i(x)) \leq N^{-\frac12}\|\widehat f\|_1, \label{eq:linftyapprox realZbound}\\
        -N^{-\frac12}\|\widehat f\|_1 \leq \operatorname{Im}(Z_i(x)) \leq N^{-\frac12}\|\widehat f\|_1. \label{eq:linftyapprox imZbound}
    \end{gather}
    Next, by the union bound note that for each $x$ we have
    \begin{align*}
        \mathbb{P}\left(\left|\mathbb{E}\left(Z(x)\right)-\frac{1}{k}\sum_{i=1}^{k}Z_{i}(x)\right|\geq \eta\right) \leq
        \mathbb{P}\left(\left|\mathbb{E}\left(\operatorname{Re}(Z(x))\right)-\frac{1}{k}\sum_{i=1}^{k}\operatorname{Re}(Z_{i}(x))\right|\geq \frac{\eta}{2}\right) \\
        +\mathbb{P}\left(\left|\mathbb{E}\left(\operatorname{Im}(Z(x))\right)-\frac{1}{k}\sum_{i=1}^{k}\operatorname{Im}(Z_{i}(x))\right|\geq \frac{\eta}{2}\right).
    \end{align*}

    % \blue{In the above sums, $Z_k$ should be replaced by $Z_i$ right?}

    % yup, fixed -Nate
    
    By Hoeffding's inequality (\cite{Hoeffding63}), as well as \eqref{eq:linftyapprox realZbound} and \eqref{eq:linftyapprox imZbound}, we get that the right-hand side is bounded by
    \begin{equation} \label{eq:hoeffdingsLinftybound}
        4 \exp \left( - \frac{2 \left( \frac{\eta k}{2} \right)^2}{k \left( 2N^{-\frac12}\|\widehat f\|_1 \right)^2} \right) = 4 \exp \left( - \frac{\eta^2 k N}{8\|\widehat f\|_1^2} \right).
    \end{equation}
    Now, suppose the right-hand side in \eqref{eq:hoeffdingsLinftybound} is less than $\frac 1 N$, and set $P(x) = \frac{1}{k} \sum_{i=1}^k Z_i(x)$. Then by the union bound as well as the earlier observation that $\mathbb E[Z(x)] = f(x)$, we have
    $$\mathbb P \left( \|f-P\|_\infty \geq \eta \right) \leq \sum_{x\in\mathbb Z_N} \mathbb P\left( |f(x) - P(x)| \geq \eta \right) < 1,$$
    so there is a deterministic choice of $Z_1,\dots,Z_k$ with $\|f-P\|_\infty < \eta$.
    The assumption that \eqref{eq:hoeffdingsLinftybound} is less than $\frac1N$ amounts to
    $$4N \exp \left( - \frac{\eta^2 k N}{8\|\widehat f\|_1^2} \right) < 1,$$
    i.e.,
    $$\log(4N) - \frac{\eta^2 k N}{8\|\widehat f\|_1^2}<0. $$
    Then by setting $\eta$ to equal $\varepsilon\|f\|_{\infty}$ and rearranging, we get that for any $k$ such that
    $$
    k>8\left(\frac{\|\widehat{f}\|_{L^{1}(\mu)}}{\|f\|_{L^{\infty}}}\right)^{2}\frac{N\log(4N)}{\varepsilon^{2}},
    $$
    there is a $P$ such that
    $$\|f - P\|_\infty < \varepsilon \|f\|_\infty,$$
    and we are done. \qed

\vskip.25in 

\subsection{Proof of Theorem \ref{thm:L2polynomialapprox}}
    We proceed as in the proof of Theorem \ref{thm:Linftypolynomialapprox}, so suppose $f:\mathbb Z_N\to \mathbb C$, and again define the random function $Z:\mathbb Z_N\to \mathbb C$, where for each $m$,
    $$Z(x) = \|\widehat f\|_1 \operatorname{sgn}(\widehat f(m)) N^{-\frac12} \chi(mx)$$
    with probability $\frac{|\widehat f(m)|}{\|\widehat f\|_1}$. We again have that $\mathbb E[Z(x)] = f(x)$. Moreover, note that for each $x$,
    \begin{align*}
        \mathbb E|Z(x)|^2 &= \sum_{m\in \mathbb Z_N} \left| \|\widehat f\|_1 \frac{\widehat f(m)}{|\widehat f(m)|} N^{-\frac12} \chi(mx) \right|^2 \cdot \frac{|\widehat f(m)|}{\|\widehat f\|_1} \\
        &= \frac1N \|\widehat f\|_1\sum_{m\in \mathbb Z_N} |\widehat f(m)| \\
        &= \frac1N \|\widehat f\|_1^2,
    \end{align*}
    and thus the variance of $Z(x)$ is
    $$\operatorname{Var}(Z(x)) = \mathbb E|Z(x)|^2 - |\mathbb E[Z(x)]|^2 = \frac1N \|\widehat f\|_1^2 - |f(x)|^2.$$
    Now, let $Z_1,\dots,Z_k$ be random i.i.d. functions with distribution $Z$, and define the random trigonometric polynomial $P$ by
    $$P(x) = \frac{1}{k} \sum_{i=1}^k Z_i(x).$$
    Note that $\mathbb E[P(x)] = f(x)$ and that by independence,
    $$\operatorname{Var}(P(x)) = \frac{1}{k}\operatorname{Var}(Z(x)),$$
    and thus
    \begin{align*}
        \mathbb E\|f - P\|_2^2 &= \sum_{x\in\mathbb Z_N} \mathbb E|f(x) - P(x)|^2 \\
        &= \sum_{x\in\mathbb Z_N} \operatorname{Var}(P(x)) \\
        &= \frac{1}{k} \sum_{x\in \mathbb Z_N} \operatorname{Var}(Z(x)) \\
        &= \frac{1}{k} \sum_{x\in \mathbb Z_N} \frac1N \|\widehat f\|_1^2 - |f(x)|^2 \\
        &= \frac{1}{k}\left(\|\widehat f\|_1^2 - \|f\|_2^2\right).
    \end{align*}
    Now, if we assume this final value is less than $\eta^2 \|f\|_2^2$, then there exists a deterministic choice of $P$ such that $\|f - P\|_2 < \eta \|f\|_2$. \\
    This assumption on $k$ amounts to
    \begin{align*}
        k &> \frac{1}{\eta^2} \cdot \frac{\|\widehat f\|_1^2 - \|f\|_2^2}{\|f\|_2^2} \\
        &= \frac{1}{\eta^2} \left( \frac{\|\widehat f\|_1^2}{\|f\|_2^2} - 1 \right) \\
        &= \frac{\text{FR}(f)^2 - 1}{\eta^2},
    \end{align*}
    and thus for any such $k$, there is a trigonometric polynomial $P$ with $\|f-P\|_2 < \eta\|f\|_2$, and we are done. \qed

\subsection{Proof of Theorem \ref{thm:L1polynomialapprox}} 

Again define the random function $Z:\mathbb Z_N\to\mathbb C$ by choosing $m\in\mathbb Z_N$ with probability $\frac{|\widehat f (m)|}{\|\widehat f\|_1}$ and setting
$$Z(x) = \|\widehat f\|_1 \operatorname{sgn}(\widehat f (m)) N^{-1/2} \chi(mx).$$
If $Z_1,\dots,Z_k$ are independent copies of $Z$, and if
$$P(x) = \frac{1}{k} \sum_{i=1}^k Z_i(x)$$
Then $\mathbb E[P(x)] = f(x)$, and moreover by the bound \eqref{eq:hoeffdingsLinftybound} from the proof of Theorem \ref{thm:Linftypolynomialapprox}, we get that for every $x\in\mathbb Z_N$,
$$\mathbb P(|f(x) - P(x)| \geq t) \leq 4\exp\left( -\frac{t^2kN}{8\|\widehat f\|_1^2} \right).$$
Consequently, we have that
\begin{align*}
    \mathbb E\|f-P\|_1 &= \sum_{x\in\mathbb Z_N} \mathbb E|f(x) - P(x)| \\
    &= \sum_{x\in\mathbb Z_N} \int_0^\infty \mathbb P(|f(x)-P(x)| \ge t)dt \\
    &\leq \sum_{x\in\mathbb Z_N} \int_0^\infty 4 \exp\left( -\frac{t^2kN}{8\|\widehat f\|_1^2} \right)dt \\
    &= 4N \int_0^\infty \exp\left( -t^2 \cdot \frac{kN}{8\|\widehat f\|_1^2} \right)dt \\
    &= 4N\left(\frac{\sqrt 8\|\widehat f\|_1}{\sqrt{kN}}\right) \int_0^\infty \exp\left(-t^2\right)dt \\
    &= \frac{2\sqrt{8\pi N}}{\sqrt k}\|\widehat f\|_1.
\end{align*}
Now, if we assume this final value is less than $\eta\|f\|_1$, then there is a deterministic choice of $P$ such that $\|f-P\|_1 < \eta \|f\|_1$. \\
This assumption amounts to
$$\frac{2\sqrt{8\pi N}}{\sqrt k}\|\widehat f\|_1 < \eta\|f\|_1,$$
or in other words
$$k > 32\pi \left( \frac{\|\widehat f\|_1}{\|f\|_1} \right)^2 \frac{N}{\eta^2}.$$
Thus for any such $k$, there is a trigonometric polynomial $P$ with $\|f-P\|_1 < \eta\|f\|_1$, which completes the proof. \qed

\vskip.25in 

\subsection{Proof of Theorem \ref{thm:FourierRatio and AlgorithmicRateDistortion}}

Let $f:\mathbb{Z}\to\mathbb{C}$ and let $\varepsilon>0$. If $k = \frac{\text{FR}(f)^2}{\varepsilon^2}$, then by Theorem \ref{thm:L2polynomialapprox}, there exists a trigonometric polynomial $P$ of the form
$$P(x) = \sum_{i=1}^k c_i \chi(m_ix)$$
such that $\|f-P\|_2 < \varepsilon \|f\|_2$. We show that there is another trigonometric polynomial $P'$ with low Kolmogorov complexity such that $\|P-P'\|_2 < \varepsilon \|f\|_2$, since this will imply $r_f(2\varepsilon) \leq \K(P')$.

First, note that
\begin{align*}
    \widehat P(m_j) &= \frac{1}{\sqrt N} \sum_{x\in\mathbb Z_N} \chi(-xm_j)P(x) \\
    &= \frac{1}{\sqrt N} \sum_{x\in \mathbb Z_N} \sum_{i=1}^k c_i \chi(x(m_i-m_j)) \\
    &= \sqrt N c_j.
\end{align*}
Now, let $M$ be a constant to be chosen later, and define $P'$ to be
$$P'(x) = \sum_{i=1}^k c_i' \chi(m_ix),$$
where $c_i'$ is $c_i$, truncated to $M$ digits after the decimal point. 
Note that since
\begin{align*}
    |c_j'| &\leq |c_j| \\
    &= \frac{1}{\sqrt N} \left| \widehat P (m_j) \right | \\
    &\leq \frac1{\sqrt N} \|\widehat P\|_2 \\
    &= \frac{1}{\sqrt N} \| P \|_2 \\
    &\leq \frac{(1+\varepsilon)\|f\|_2}{\sqrt N},
\end{align*}
and since each $c_j'$ contains at most $M$ digits after the decimal point, we can encode each $c_j'$ in length
$$C_U\left( M + \log\left(\frac{(1+\varepsilon)\|f\|_2}{\sqrt N}\right) \right),$$
where $C_U$ is some constant depending on the fixed universal Turing machine.

Since encoding each $m_i$ takes length $C_U\log N$, we see that we can encode $P'$ using a Turing machine of length
$$C_Uk\left( M + \log\left(\frac{(1+\varepsilon)\|f\|_2}{\sqrt N}\right) + \log N \right) + C_U',$$
where $C_U'$ is another constant depending on the universal Turing machine, giving the length required to obtain the trigonometric polynomial $P'$ from the values of $c_i'$ and $m_i$. This thus gives a bound on $\K(P')$.

Next, note that since $c_i'$ is obtained by truncating $c_i$, we have that
$$|c_i - c_i'| \leq 2^{-M},$$
so that 
\begin{align*}
    \|P-P'\|_2^2 &= \|\widehat P - \widehat {P'}\|_2^2 \\
    &= N\sum_{i=1}^k |c_i - c_i'|^2 \\
    &\leq Nk2^{-2M}. 
\end{align*}
Thus, if we want $\|P - P'\|_2^2 \leq \varepsilon^2\|f\|_2^2$, we need that
$$k2^{-2M} N \leq \varepsilon^2\|f\|_2^2,$$
so in other words that
$$M \geq \frac12 \log \left ( \frac{k N}{\varepsilon^2\|f\|_2^2} \right) = \log \left( \frac{\sqrt k \sqrt N}{\varepsilon \|f\|_2} \right).$$
 
Taking $M$ such that equality holds above, we have that
\begin{align*}
    \K(P') &\leq C_Uk\left( M + \log\left(\frac{(1+\varepsilon)\|f\|_2}{\sqrt N}\right) + \log N \right) + C_U' \\
    &= C_U k\left( \log \left( \frac{\sqrt k \sqrt N}{\varepsilon \|f\|_2} \right) + \log\left(\frac{(1+\varepsilon)\|f\|_2}{\sqrt N}\right) + \log N \right) + C_U' \\
    &= C_Uk \log\left( \frac{(1+\varepsilon)N\sqrt k}{\varepsilon} \right) + C_U'.
\end{align*}
and thus since
$$\|f - P'\|_2 \leq \|f - P\|_2 + \|P-P'\|_2 < 2\varepsilon\|f\|_2,$$
we get that
\begin{align*}
    r_f(2\varepsilon) &\leq C_Uk \log\left( \frac{(1+\varepsilon)N\sqrt k}{\varepsilon} \right) + C_U' \\
    &\leq O \left( k \log\left( \frac{(1+\varepsilon)N\sqrt k}{\varepsilon} \right) \right).
\end{align*} \qed

\vskip.125in    
\subsection{Proof of Theorem \ref{theorem:alg}} 

We shall need the following classical definition. See e.g. \cite{CT05}, \cite{C08}, and \cite{Rudelson}. 

\begin{definition}[Restricted Isometry Condition]
A matrix $A \in \mathbb{R}^{m \times n}$ is said to satisfy the 
\textit{restricted isometry condition of order $k$} with constant 
$\delta_k \in (0,1)$ if for all vectors $x \in \mathbb{R}^n$ with at 
most $k$ nonzero entries and for sufficient large $C$, the following holds:
\[
C(1 - \delta_k)\|x\|_2^2 \leq \|Ax\|_2^2 \leq C(1 + \delta_k)\|x\|_2^2.
\]

\end{definition}
\vskip.125in 

Define $\Phi := \left (\frac{\chi(s \cdot r)}{\sqrt{N}} \right )_{s,r} \in \mathbb{C}^{N \times N}$ to be the $N \times N$ inverse Fourier matrix.  Set $S=\frac{r^2}{\varepsilon^2}$.  Consider $\delta_{4S} = 1/4$, a restricted isometry constant corresponding to vectors with at most $4S$ non-zero entries.  Noting that $\delta_{3S} \leq \delta_{4S}$.  Sample $q' = CS \log^2(S) \log(N)$ rows of $\Phi$ uniformly and independently, denoting $q \leq q'$ to be the number of distinct rows.  Denote the resulting matrix of distinct rows as $A \in \mathbb{C}^{q \times N}$.   Then, with probability $1-2^{-\Omega( (\log N) \cdot  (\log S))}$, $A$ satisfies the restricted isometry condition with  $\delta_{4S} =1/4$, by Theorem 4.5 of \cite{Haviv15}, and with high-probability the condition
\begin{equation}
\label{eq:rip_1}
\delta_{3S}+ 3 \delta_{4S} < 2 
\end{equation}
is satisfied.

\vspace{0.125in}
Let $f \in \mathcal{C}(r)$.  By Theorem \ref{thm:L2polynomialapprox}, there exists an $S$-sparse polynomial $P$, such that,
$$
\|f - P\|_2 \leq \|f\|_2 \cdot \varepsilon. 
$$
Let $f_X$ be the restriction of $f$ on $X$, $y=f_X,x_0 =\widehat{P}$. We get $\|y-Ax_0\| = \|f_X-A\widehat{P}\|_2 =\|f-P\|_{L^2(X)}\leq \|f-P\|_2 \leq  \|f\|_2 \cdot \varepsilon$.
Consider the following optimization problem:
$$
\text{ (*): } \min\|\widehat{x}\|_1 \text{ subject to } \|f-x\|_{L^2(X)} \leq \eta.
$$

The following result (Theorem 1 of \cite{Candes05}) gives us guarantees for (*).
\begin{theorem}
\label{TaoTheorem}
Let $S$ be such that $\delta_{3S} + 3\delta_{4S} < 2$. Then for any signal $x_0$ supported on $T_0$
with $ |T_0| \leq S$ and any perturbation $e$ with $\|e\|_2 \leq \eta$, the solution $x^{*}$
to (*) obeys
$$
\|\widehat{x^{*}} -x_0\|_2 \leq C_S \cdot \eta
$$
where the constant $C_S$ may only depend on $\delta_{4S}$. For reasonable values of $\delta_{4S}$, $C_S$ is well
behaved; e.g. $C_S \approx 8.82$ for $\delta_{4S} = 1/5$ and $C_S \approx 10.47$ for $\delta_{4S} = 1/4$.
\end{theorem}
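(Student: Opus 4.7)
The plan is to follow the now-standard RIP strategy used by Cand\`es--Romberg--Tao to prove stable $\ell_1$-recovery theorems of this type. Set the error vector $h := \widehat{x^*} - x_0$; the goal is to bound $\|h\|_2$ in terms of $\eta$. The argument rests on two elementary observations about $h$. First, the original signal $x_0$ is feasible for the program (*) because $\|y - Ax_0\|_2 = \|e\|_2 \le \eta$, and the optimizer satisfies $\|y - A\widehat{x^*}\|_2 \le \eta$ as well, so the triangle inequality yields the \emph{tube bound}
$$\|Ah\|_2 \;\le\; \|y - A\widehat{x^*}\|_2 + \|y - Ax_0\|_2 \;\le\; 2\eta.$$
Second, the $\ell_1$-optimality of $\widehat{x^*}$ combined with $\mathrm{supp}(x_0) \subset T_0$ yields the \emph{cone condition}
$$\|h_{T_0^c}\|_1 \;\le\; \|h_{T_0}\|_1,$$
obtained by writing $\|x_0\|_1 \ge \|\widehat{x^*}\|_1 = \|x_{0,T_0} + h_{T_0}\|_1 + \|h_{T_0^c}\|_1 \ge \|x_0\|_1 - \|h_{T_0}\|_1 + \|h_{T_0^c}\|_1$ and rearranging.

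Next I would perform a block decomposition of $T_0^c$. Reorder the indices of $T_0^c$ in decreasing order of $|h(i)|$ and split them into consecutive blocks $T_1, T_2, \ldots$ of size $3S$. A standard ``decreasing rearrangement'' argument shows that each coordinate of $h$ on $T_j$ (for $j \ge 2$) is dominated by the average magnitude of $h$ on $T_{j-1}$; summing and applying Cauchy--Schwarz together with the cone condition gives
$$\sum_{j \ge 2} \|h_{T_j}\|_2 \;\le\; \frac{1}{\sqrt{3S}}\,\|h_{T_0^c}\|_1 \;\le\; \frac{1}{\sqrt{3S}}\,\|h_{T_0}\|_1 \;\le\; \frac{1}{\sqrt{3}}\,\|h_{T_0}\|_2.$$
Thus the $\ell_2$-mass of $h$ outside $T_0 \cup T_1$ is controlled by the $\ell_2$-mass on $T_0$ alone.

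The heart of the argument is the RIP estimate on $h_{T_0 \cup T_1}$, whose support has size at most $4S$. The lower RIP bound gives $(1 - \delta_{4S})\|h_{T_0 \cup T_1}\|_2^2 \le \|A h_{T_0 \cup T_1}\|_2^2$, and one expands
$$\|A h_{T_0 \cup T_1}\|_2^2 \;=\; \langle A h_{T_0 \cup T_1},\, Ah\rangle \;-\; \sum_{j \ge 2}\langle A h_{T_0 \cup T_1},\, A h_{T_j}\rangle.$$
The first inner product is bounded by Cauchy--Schwarz, the upper RIP, and the tube bound, giving something of the form $2\eta\sqrt{1+\delta_{4S}}\,\|h_{T_0 \cup T_1}\|_2$. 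For the cross terms, I invoke the ``almost-orthogonality'' lemma: for disjointly supported $u, v$ whose combined support has size at most $K$, one has $|\langle Au, Av\rangle| \le \delta_K \|u\|_2 \|v\|_2$. Splitting $h_{T_0 \cup T_1} = h_{T_0} + h_{T_1}$ and pairing each piece with $h_{T_j}$, the relevant disjoint supports have sizes $S + 3S = 4S$ and $3S + 3S = 6S$ (the latter handled by further splitting $T_j$ or by using $\delta_{3S}$ bounds on each single block), and these produce linear combinations of $\delta_{3S}$ and $\delta_{4S}$ whose coefficients add up to exactly the quantity appearing in the hypothesis $\delta_{3S} + 3\delta_{4S} < 2$. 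Plugging the estimates back into the RIP inequality gives a bound of the form $\|h_{T_0 \cup T_1}\|_2 \le \alpha_S\,\eta$, and combining with $\|h\|_2 \le \|h_{T_0 \cup T_1}\|_2 + \sum_{j \ge 2}\|h_{T_j}\|_2$ yields $\|\widehat{x^*} - x_0\|_2 \le C_S \eta$ with $C_S$ depending only on $\delta_{4S}$.

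The main obstacle is the bookkeeping in this last step: tracking the explicit numerical constants so that the accumulated RIP terms align exactly with $\delta_{3S} + 3\delta_{4S}$, and so that the specific substitution $\delta_{4S} = 1/4$ yields the quoted value $C_S \approx 10.47$. Every other ingredient is either an elementary convex-analytic observation (the cone condition), a pigeonhole-type rearrangement (the tail bound), or a textbook application of RIP, but keeping the constants sharp requires care.
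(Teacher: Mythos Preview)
The paper does not prove this theorem at all: it is quoted verbatim as Theorem~1 of \cite{Candes05} (Cand\`es--Romberg--Tao) and invoked as a black box inside the proof of Theorem~\ref{theorem:alg}. So there is no ``paper's own proof'' to compare against; your proposal is instead a reconstruction of the original CRT argument.

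As such a reconstruction, your outline is broadly faithful to the CRT strategy---feasibility gives the tube bound, $\ell_1$-minimality gives the cone condition, then block-decompose the tail and use RIP on the head---but the cross-term bookkeeping you sketch is not quite right. You take the off-support blocks $T_j$ of size $3S$ and then pair $h_{T_1}$ against $h_{T_j}$, producing disjoint supports of combined size $6S$; you suggest handling this ``by further splitting $T_j$ or by using $\delta_{3S}$ bounds on each single block,'' but neither maneuver recovers a clean $\delta_{3S}$ or $\delta_{4S}$ term, and it is exactly here that the precise combination $\delta_{3S}+3\delta_{4S}$ must emerge. In the original proof the blocks $T_j$ for $j\ge1$ are taken of size $M$ and one pairs $h_{T_0}$ (size $S$) and $h_{T_1}$ (size $M$) separately against each later $h_{T_j}$ (size $M$), so the relevant RIP constants are $\delta_{S+M}$ and $\delta_{2M}$; the choice $M=3S$ then does \emph{not} directly give $\delta_{3S}+3\delta_{4S}$, and CRT instead work with $M=S$-sized blocks together with a more delicate accounting that produces the stated condition. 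If you want the exact numerical constants $8.82$ and $10.47$, you would need to reproduce that accounting rather than the coarser version you describe.
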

\begin{remark}
Theorem \ref{TaoTheorem} is stated for real-valued matrices.  However, the authors note in \cite{Candes05}, the real assumption is for proof simplicity, and the Theorem also applies to complex-valued orthogonal matrices, such as the Fourier matrix.
\end{remark}

\vspace{0.125in}
Putting everything together, setting $\eta = \|f\|_2 \cdot \varepsilon$, and using equation (\ref{eq:rip_1}) and Theorem \ref{TaoTheorem}, we obtain
\[\|\widehat{x^{*}}-\widehat{P}\|_2 \leq 10.47 \cdot \|f\|_2 \cdot \varepsilon.\]
By Parseval's Theorem, $\|\widehat{x^{*}}-\widehat{P}\|_2 = \|x-P\|_2$,  so by using triangle's inequality, we get
\[\|x^*-f\|_2 \leq \|x^*-P\|_2+\|f-P\|_2 \leq 11.47 \cdot \|f\|_2 \cdot \varepsilon.\] 

\vskip.125in 

\subsection{Proof of Theorem \ref{theorem:randomfourierration}}

Let $\sigma_x=\mathbf 1_{\{x\in X\}}$ so that $f_X=f\cdot\mathbf 1_X$ and $\mathbb E\sigma_x=p$.

\subsubsection{Control of $\|\widehat{f_X}\|_2$.} By Plancherel, $\|\widehat{f_X}\|_2=\|f_X\|_2$ and
$$
\|f_X\|_2^2=\sum_{x=0}^{N-1}\sigma_x|f(x)|^2,\qquad \mathbb E\|f_X\|_2^2=p\|f\|_2^2.
$$
The summands are independent, bounded by $\|f\|_\infty^2$, and have variance at most $p\|f\|_\infty^2|f(x)|^2$. A standard Bernstein bound yields, for some absolute $c_1>0$,
$$
\mathbb P\Big(\big|\|f_X\|_2^2-p\|f\|_2^2\big|> t\Big)\le 2\exp\left(-c_1\min\left\{\frac{t^2}{p\|f\|_\infty^2\|f\|_2^2},\frac{t}{\|f\|_\infty^2}\right\}\right).
$$
Choose $t=c_2\,\varepsilon^2 p\,\|f\|_2^2$ and use $\|f\|_\infty^2\le \mu(f)\|f\|_2^2/N$ to obtain
$$
\big|\|f_X\|_2^2-p\|f\|_2^2\big|\le c_2\,\varepsilon^2 p\,\|f\|_2^2
$$
with probability at least $1-2e^{-u}$ provided $p\ge C_0\frac{\mu(f)}{\varepsilon^2}\frac{u}{N}$. Taking square roots and using $|a-b|\le \varepsilon b$ implies
$$
\big|\|f_X\|_2-\sqrt p\,\|f\|_2\big|\le \varepsilon\sqrt p\,\|f\|_2.
$$
By Plancherel again, this is the claimed bound for $\|\widehat{f_X}\|_2$.

\subsubsection{A decomposition of $\widehat{f_X}$.} With our normalization,
$$
\widehat{f_X}=\widehat{f\cdot \mathbf 1_X}=N^{-1/2}\,\widehat f*\widehat{\mathbf 1_X}.
$$
Decompose $\widehat{\mathbf 1_X}=p\sqrt N\,\delta_0+W$, where
$$
W(m)=N^{-1/2}\sum_{x=0}^{N-1}(\sigma_x-p)e^{-2\pi i xm/N},\quad \mathbb E W(m)=0,\quad \mathbb E|W(m)|^2=p(1-p).
$$
Hence
$$
\widehat{f_X}=p\widehat f+\Delta,\qquad \Delta=N^{-1/2}(\widehat f*W).
$$
Equivalently, in the time domain
$$
\Delta(m)=N^{-1}\sum_{x=0}^{N-1}(\sigma_x-p) f(x) e^{-2\pi i xm/N}.
$$
Thus, as a vector in $\mathbb C^{\mathbb Z_N}$,
$$
\Delta=\sum_{x=0}^{N-1}(\sigma_x-p)\,v_x,\qquad v_x(m)=N^{-1} f(x) e^{-2\pi i xm/N}.
$$
Note the norms $\|v_x\|_1=|f(x)|$ and $\|v_x\|_2=N^{-1/2}|f(x)|$.

\subsubsection{Some tools from probability}

Symmetrization inequality for norms. Let $Z_1,\dots,Z_n$ be independent mean-zero random vectors in a normed space, and let $\varepsilon_1,\dots,\varepsilon_n$ be i.i.d. Rademacher signs ($\mathbb P(\varepsilon_i=\pm1)=1/2$), independent of the $Z_i$. Then
$$
\mathbb E\Big\|\sum_{i=1}^n Z_i\Big\|\le 2\,\mathbb E\Big\|\sum_{i=1}^n \varepsilon_i Z_i\Big\|.
$$

To see this, let $Z_1',\dots,Z_n'$ be an independent copy of $Z_1,\dots,Z_n$, and write $\mathbb E'$ for expectation over the copy. By Jensen and convexity of the norm,
$$
\mathbb E\Big\|\sum_i Z_i\Big\|=\mathbb E\Big\|\mathbb E'\sum_i(Z_i-Z_i')\Big\|
\le \mathbb E\mathbb E'\Big\|\sum_i(Z_i-Z_i')\Big\|.
$$
Conditional on $(Z_i,Z_i')$, the distribution of $\sum_i(Z_i-Z_i')$ is the same as that of $\sum_i \varepsilon_i (Z_i-Z_i')$. Hence
$$
\mathbb E\Big\|\sum_i Z_i\Big\|\le \mathbb E\Big\|\sum_i \varepsilon_i (Z_i-Z_i')\Big\|
\le \mathbb E\Big\|\sum_i \varepsilon_i Z_i\Big\|+\mathbb E\Big\|\sum_i \varepsilon_i Z_i'\Big\|
=2\,\mathbb E\Big\|\sum_i \varepsilon_i Z_i\Big\|.
$$
This proves the symmetrization inequality.

Khintchine inequality for $p=1$ (scalar case). Let $\varepsilon_1,\dots,\varepsilon_n$ be i.i.d. Rademacher random variables, and $a_1,\dots,a_n\in\mathbb C$. Then
$$
\mathbb E\Big|\sum_{i=1}^n \varepsilon_i a_i\Big|\le \Big(\sum_{i=1}^n |a_i|^2\Big)^{1/2}.
$$
This is the classical Khintchine inequality with optimal constant $1$ at $p=1$. We will apply it coordinatewise to vector-valued sums.

\subsubsection{Bounding $\mathbb E\|\Delta\|_1$ by symmetrization and Khintchine.} Apply the symmetrization inequality to $Z_x=(\sigma_x-p)v_x$:
$$
\mathbb E\|\Delta\|_1=\mathbb E\Big\|\sum_x (\sigma_x-p)v_x\Big\|_1
\le 2\,\mathbb E\Big\|\sum_x \varepsilon_x(\sigma_x-p)v_x\Big\|_1.
$$
Expand the $\ell_1$ norm as a sum over coordinates $m\in\mathbb Z_N$ and use Fubini to move expectations inside:
$$
\mathbb E\Big\|\sum_x \varepsilon_x(\sigma_x-p)v_x\Big\|_1
=\sum_{m\in\mathbb Z_N}\mathbb E\Big|\sum_x \varepsilon_x(\sigma_x-p)v_x(m)\Big|.
$$
For each fixed $m$, condition on the variables $(\sigma_x)$ and apply the scalar Khintchine inequality to the Rademacher series in $x$:
$$
\mathbb E_{\varepsilon}\Big|\sum_x \varepsilon_x(\sigma_x-p)v_x(m)\Big|
\le \Big(\sum_x |(\sigma_x-p)v_x(m)|^2\Big)^{1/2}.
$$
Now take expectation over $(\sigma_x)$ and use $\mathbb E(\sigma_x-p)^2=p(1-p)$ and $|v_x(m)|=N^{-1}|f(x)|$ (independent of $m$):
$$
\mathbb E\Big(\sum_x |(\sigma_x-p)v_x(m)|^2\Big)^{1/2}
\le \Big(\sum_x \mathbb E(\sigma_x-p)^2 |v_x(m)|^2\Big)^{1/2}
=\sqrt{p(1-p)}\Big(\sum_x |v_x(m)|^2\Big)^{1/2}.
$$
Since $\sum_x |v_x(m)|^2=N^{-2}\sum_x |f(x)|^2=N^{-2}\|f\|_2^2$, we obtain, for each $m$,
$$
\mathbb E\Big|\sum_x \varepsilon_x(\sigma_x-p)v_x(m)\Big|
\le \sqrt{p(1-p)}\cdot \frac{\|f\|_2}{N}.
$$
Summing over all $m\in\mathbb Z_N$ gives
$$
\mathbb E\Big\|\sum_x \varepsilon_x(\sigma_x-p)v_x\Big\|_1
\le N\cdot \sqrt{p(1-p)}\cdot \frac{\|f\|_2}{N}
=\sqrt{p(1-p)}\,\|f\|_2.
$$
Finally, restore the factor $2$ from symmetrization:
$$
\mathbb E\|\Delta\|_1 \le 2\sqrt{p(1-p)}\,\|f\|_2 \le 2\sqrt p\,\|f\|_2.
$$

\subsubsection{High-probability control of $\|\Delta\|_1$.} Consider the function of the $N$ independent Bernoulli variables $\sigma=(\sigma_x)_{x}$,
$$
\Phi(\sigma)=\left\|\sum_x (\sigma_x-p)v_x\right\|_1=\|\Delta\|_1.
$$
If we change only the coordinate $\sigma_x$ to another value in $\{0,1\}$ while keeping all other $\sigma_y$ fixed, the vector inside $\|\cdot\|_1$ changes by at most $|\,\sigma_x-\sigma_x'\,|\,\|v_x\|_1\le \|v_x\|_1=|f(x)|$. Hence $\Phi$ has bounded differences with constants $c_x=|f(x)|$. McDiarmid's inequality yields, for all $t>0$,
$$
\mathbb P\big(\Phi-\mathbb E\Phi\ge t\big)\le \exp\left(-\frac{2t^2}{\sum_x c_x^2}\right)
=\exp\left(-\frac{2t^2}{\|f\|_2^2}\right).
$$
Equivalently, with probability at least $1-e^{-u}$,
$$
\|\Delta\|_1 \le \mathbb E\|\Delta\|_1 + \frac{1}{\sqrt{2}}\,\sqrt{u}\,\|f\|_2
\le 2\sqrt p\,\|f\|_2 + \frac{1}{\sqrt{2}}\sqrt{u}\,\|f\|_2.
$$
This inequality is correct but not yet in the most useful scaling for small $p$. To express the deviation in terms that shrink when $pN$ grows, we further bound $\|f\|_2\le \|f\|_\infty \sqrt N$ to obtain the coarser but handier form
$$
\|\Delta\|_1 \le C_1\left(\sqrt p\,\|f\|_2 + \|f\|_\infty \sqrt{Nu}\right)
\quad\text{with probability at least }1-e^{-u},
$$
for some absolute $C_1$.

\subsubsection{Control of $\|\widehat{f_X}\|_1$ and comparison} From above,
$$
\big|\|\widehat{f_X}\|_1-p\|\widehat f\|_1\big|\le \|\Delta\|_1.
$$
Combine this with the bound from Step 5 and use $\|\widehat f\|_1\ge \|\widehat f\|_2=\|f\|_2$ to get
$$
\frac{\big|\|\widehat{f_X}\|_1-p\|\widehat f\|_1\big|}{p\|\widehat f\|_1}
\le \frac{C_1}{p}\left(\frac{\sqrt p\,\|f\|_2}{\|f\|_2}+\frac{\|f\|_\infty \sqrt{Nu}}{\|f\|_2}\right)
\le C_1\left(\frac{1}{\sqrt p}+\sqrt{\frac{\mu(f)}{pN}}\,\sqrt{u}\right).
$$
If $p \ge C\frac{\mu(f)}{\varepsilon^2}\frac{\log N+u}{N}$ with $C$ sufficiently large, then the right-hand side is at most $\varepsilon$, and therefore
$$
\big|\|\widehat{f_X}\|_1-p\|\widehat f\|_1\big|\le \varepsilon p\|\widehat f\|_1
$$
with probability at least $1-e^{-u}$. Intersect with the high-probability event from Step 1 (probability at least $1-2e^{-u}$) to get both norm relations simultaneously.

\subsubsection{Bounds on $\text{FR}(f)$} Let $A=\|\widehat{f_X}\|_1$, $B=\|\widehat{f_X}\|_2$, $a=p\|\widehat f\|_1$, $b=\sqrt p\,\|\widehat f\|_2$. In the case of the high probability event, we have $|A-a|\le \varepsilon a$ and $|B-b|\le \varepsilon b$. Then
$$
\frac{A}{B}\le \frac{a(1+\varepsilon)}{b(1-\varepsilon)}=\frac{1+\varepsilon}{\sqrt{1-\varepsilon}}\,\frac{\|\widehat f\|_1}{\|\widehat f\|_2},
\qquad
\frac{A}{B}\ge \frac{a(1-\varepsilon)}{b(1+\varepsilon)}=\frac{1-\varepsilon}{\sqrt{1+\varepsilon}}\,\frac{\|\widehat f\|_1}{\|\widehat f\|_2}.
$$
For $\varepsilon\in(0,1/2)$ a straightforward calculation shows that 
$$ 1-3\varepsilon \leq (1\pm\varepsilon)/(1\pm\varepsilon)^{1/2} \leq 1+3\varepsilon,$$ which gives the final assertion. This completes the proof.

\vskip.125in 

\subsection{Proof of Theorem \ref{theorem:sampledL2}} 

We write $\|f\|_2^2=\sum_{x\in\mathbb Z_N}|f(x)|^2$ and note that
$$ \|f_X\|_2^2=\sum_{x\in\mathbb Z_N}|f(x)|^2 1_X(x),$$ and 
$$ \mathbb E\|f_X\|_2^2=p\|f\|_2^2. $$

For each $x\in\mathbb Z_N$, let 
$$
Y_x=\big( 1_X(x)-p\big)\,|f(x)|^2,
$$
so that the $Y_x$ are independent, mean zero, and
$$
\|f_X\|_2^2-p\|f\|_2^2=\sum_{x}Y_x.
$$
We will apply Bernstein's inequality to the sum $S=\sum_x Y_x$. First, for each $x$,
$$
|Y_x|\le |f(x)|^2\le \|f\|_\infty^2=:M,
$$
and
$$
\operatorname{Var}(Y_x)=\operatorname{Var}(1_X(x) \,|f(x)|^4)=p(1-p)|f(x)|^4\le p\,|f(x)|^4.
$$
Hence
$$
\sigma^2:=\sum_x\operatorname{Var}(Y_x)\le p\sum_x |f(x)|^4\le p\,\|f\|_\infty^2\sum_x|f(x)|^2=p\,\|f\|_\infty^2\|f\|_2^2.
$$
Bernstein's inequality yields, for all $t>0$,
$$
\mathbb P\big(|S|\ge t\big)\le 2\exp\Big(-\frac{t^2}{2(\sigma^2+Mt/3)}\Big).
$$
Choose
$$
t=\varepsilon\,p\,\|f\|_2^2.
$$
Using $\varepsilon\in(0,1)$, we bound the denominator by
$$
\sigma^2+Mt\le p\,\|f\|_\infty^2\|f\|_2^2+\varepsilon p\,\|f\|_\infty^2\|f\|_2^2\le 2p\,\|f\|_\infty^2\|f\|_2^2.
$$
Therefore
$$
\mathbb P\big(|\|f_X\|_2^2-p\|f\|_2^2|\ge \varepsilon p\|f\|_2^2\big)
\le 2\exp\Big(-\frac{\varepsilon^2 p^2\|f\|_2^4}{4p\,\|f\|_\infty^2\|f\|_2^2}\Big)
=2\exp\Big(-c\,\varepsilon^2 p\,\frac{\|f\|_2^2}{\|f\|_\infty^2}\Big),
$$
for a universal constant $c>0$ (for instance $c=1/4$). Since 
$$\mu(f)=N\|f\|_\infty^2/\|f\|_2^2,$$ we have
$$
\frac{\|f\|_2^2}{\|f\|_\infty^2}=\frac{N}{\mu(f)}.
$$
Hence
$$
\mathbb P\big(|\|f_X\|_2^2-p\|f\|_2^2|\ge \varepsilon p\|f\|_2^2\big)\le 2\exp\Big(-c\,\varepsilon^2 p\,\frac{N}{\mu(f)}\Big).
$$
If
$$
p\ge C\,\frac{\mu(f)}{\varepsilon^2}\,\frac{u}{N}
$$
with $C\ge c^{-1}$, then the right side is at most $2e^{-u}$. Thus, with probability at least $1-2e^{-u}$,
$$
|\|f_X\|_2^2-p\|f\|_2^2|\le \varepsilon p\|f\|_2^2.
$$

To finish the proof, observe that 
$$
\big|\sqrt{a}-\sqrt{b}\big|=\frac{|a-b|}{\sqrt{a}+\sqrt{b}}\le \frac{|a-b|}{\sqrt{b}}
$$
valid for $a,b\ge 0$. Taking $a=\|f_X\|_2^2$ and $b=p\|f\|_2^2$ gives
$$
\big|\|f_X\|_2-\sqrt{p}\,\|f\|_2\big|\le \frac{|\|f_X\|_2^2-p\|f\|_2^2|}{\sqrt{p}\,\|f\|_2}\le \varepsilon\,\sqrt{p}\,\|f\|_2,
$$
and the proof is complete. 

\vskip.125in 

\subsection{Proof of Theorem \ref{thm:VC_Result_New}}
If $\mathcal{B}(r)$'s maximum statistical query dimension is $d$, then $\mathcal{B}(r)$'s VC-dimension is $O(d)$ by Theorem \ref{thm:VCdimtoSQdim}.  Hence, it suffices to calculate the statistical query dimension of $\mathcal{C}$ over each distribution $\mathcal{D}$.

Note first that for $f\in \mathcal{B}(r)$, since $f(x)\in \{-1,1\}$ for each $x$, we have by Plancherel that
$$\|\widehat f\|_2 = \|f\|_2 = \sqrt N,$$
and thus
$$\text{FR}(f) = \frac{\|\widehat f\|_1}{\sqrt N}.$$

\vskip.125in 

Fix $\varepsilon > 0$ and  $\mathcal{D}$, a probability distribution on $X:=\mathbb{Z}_N$.  We will approximate $\mathcal{B}(r)$ with an $\varepsilon$-net using the probabilistic method.
\vspace{0.125in}
Fix $f \in \mathcal{B}(r)$ and define the random function, $Z$, where
$$
Z(x) = \frac{\|\widehat{f}\|_1}{\sqrt{N}} \cdot \text{sign}(\widehat{f}(m))
\cdot \chi(m x),
$$
with probability $\frac{|\widehat{f}(m)|}{\|\widehat{f}\|_1}$ for each $m \in X$.  Thus,
$$
\mathbb{E}(Z) = f.
$$
Draw $Z_1,\cdots,Z_k$ i.i.d. from the distribution of $Z$.  We have that
\begin{align}\notag
\mathbb{E}_Z \left (\mathbb{E}_{x \sim \mathcal{D}}\left[f(x)-\frac{1}{k} \sum_{i=1}^k Z_k(x) \right]^2 \right )
&=
\sum_{x \in X} p(x) \cdot \mathbb{E}_Z\left[f(x)-\frac{1}{k} \sum_{i=1}^k Z_k(x) \right]^2 
 \\\notag
&= \sum_{x \in X} p(x) \cdot \text{Var}\left[\frac{1}{k} \sum_{i=1}^k Z_k(x) \right]
= \sum_{x \in X} \frac{p(x)}{k^2} \cdot \sum_{i=1}^k \text{Var}Z(x) \\\notag
&
\leq \sum_{x \in X} \frac{p(x)}{k} \cdot \mathbb{E}_Z\left[Z(x) \right]^2
 \\\notag
&= \sum_{x \in X} \frac{p(x)}{k} \cdot \frac{\|\widehat{f}\|^2_1}{N} \sum_{m \in X} \frac{|\widehat{f}(m)|}{\|\widehat{f}\|_1}
\\\notag
&=
\frac{\text{FR}(f)^2}{k} < \varepsilon^2
\end{align}
if $k > \frac{\text{FR}(f)^2}{\varepsilon^2}$.  Here, $p(x)$ is the probability  mass of $x$ under the distribution $\mathcal D$.  Hence, by the probabilistic method, there exists a degree $k=\frac{\text{FR}(f)^2}{\varepsilon^2}$ polynomial, $P$, such that
$$
\|f - P\|_{L^2(\mathcal{D})} < \varepsilon.
$$
We can divide the unit circle on $\mathbb{C}$ into $M = {1/\varepsilon}$ pieces, $v_1, ..., v_M$.  Note that the $v_i$ are just the $M$-th roots of unity.  Hence, $P$ can be approximated by a degree $k$ polynomial of the form,
$$
\tilde{P}(x)= \frac{\|\widehat{f}\|_1}{k \sqrt{N}} \cdot \sum_{i=1}^{k} v_{s_i} \cdot \chi(m_i \cdot x),
$$
where $v_{s_i}$ is the closest element to $\text{sign}(\widehat{f}(m_i))$.  Indeed,
$$
\|\tilde{P}-P\|_{L^2(\mathcal{D})} \leq
\|\tilde{P}-P\|_\infty
$$
$$
=
\left |\frac{\|\widehat{f}\|_1}{k \sqrt{N}} \cdot \sum_{i=1}^{k} \left(v_{s_i} - \widehat{f}(m) \right)\cdot \chi(m_i \cdot z)
\right|
\leq
\frac{\|\widehat{f}\|_1}{k \sqrt{N}} \sum_{i=1}^k|v_{s_i}-\widehat{f}(m)|
$$
$$
\leq
\frac{\|\widehat{f}\|_1}{k \sqrt{N}} \sum_{i=1}^k 2 \varepsilon = 2 \varepsilon \cdot \text{FR}(f).
$$
By the triangle inequality,
\begin{equation} \label{eq:approxerror} \|\tilde{P} - f \|_{L^2(\mathcal{D})} \leq \|\tilde{P}-P\|_{L^2(\mathcal{D})}+ \|f - P\|_{L^2(\mathcal{D})} \leq \varepsilon+2 \varepsilon \cdot \text{FR}(f). \end{equation} 

\vskip.125in
Putting it all together, each $f \in \mathcal{B}(r)$ can be approximated by a polynomial $\tilde{P}$ up to an error of $\varepsilon(1+2\text{FR}(f))$ (as in \ref{eq:approxerror}).  There are at most
$$
\binom{N}{k} \cdot M^k
\leq {\left(\frac{Ne}{k}\right)}^k M^k = \frac{1}{\varepsilon^{\frac{\text{FR}(f)^2}{\varepsilon^2}}} \left({\frac{Ne}{\frac{\text{FR}(f)^2}{\varepsilon^2}}}\right)^{\frac{\text{FR}(f)^2}{\varepsilon^2}}.
$$

different choices of $\tilde{P}$. Hence, the covering number of $\mathcal{B}(r)$ is estimated by
$$
\mathcal{N}(\mathcal{C},\|\cdot\|_{L^2(\mathcal{D})}, \varepsilon(1+2\text{FR}(f)))
\leq  \frac{1}{\varepsilon^{\frac{\text{FR}(f)^2}{\varepsilon^2}}} \left({\frac{Ne}{\frac{\text{FR}(f)^2}{\varepsilon^2}}}\right)^{\frac{\text{FR}(f)^2}{\varepsilon^2}}.
$$

\vskip.125in

Hence, the packing number of $\mathcal{B}(r)$ is estimated by,
$$
\mathcal{P}(\mathcal{C},\|\cdot\|_{L^2(\mathcal{D})}, 4\varepsilon(1+2\text{FR}(f)))
\leq  \frac{1}{\varepsilon^{\frac{\text{FR}(f)^2}{\varepsilon^2}}} \left({\frac{Ne}{\frac{\text{FR}(f)^2}{\varepsilon^2}}}\right)^{\frac{\text{FR}(f)^2}{\varepsilon^2}}.
$$

\vskip.25in 

Suppose that $f_1,..,f_d \in \mathcal{B}(r)$, where 
\begin{equation} \label{eq:d} d =  \frac{1}{\varepsilon^{\frac{\text{FR}(f)^2}{\varepsilon^2}}} \left({\frac{Ne}{\frac{\text{FR}(f)^2}{\varepsilon^2}}}\right)^{\frac{\text{FR}(f)^2}{\varepsilon^2}}. \end{equation} 

By definition of packing, since $d > \mathcal{P}$, there exists $i \not = j$ such that
$$
\|f_i - f_j\|_{L^2(\mathcal{D})} \leq 4\varepsilon(1+2\text{FR}(f)).
$$
Hence,
$$
\mathbb{E}_{x \sim \mathcal{D}}
\left (
f_i(x) \cdot f_j(x) 
\right )
\geq \frac{2 - {(4\varepsilon(1+2\text{FR}(f))}^2}{2} > \frac{1}{d}.
$$

This shows that we may set $\varepsilon=\frac{1}{4(1+2r)}$, where $r=\text{FR}(f)$. 

\vskip.125in

Hence, the statistical query dimension is bounded above by $C N^{\alpha}$, where 
$$ C= \frac{1}{\varepsilon^{\frac{\text{FR}(f)^2}{\varepsilon^2}}} \cdot \left({\frac{e}{\frac{\text{FR}(f)^2}{\varepsilon^2}}}\right)^{\frac{\text{FR}(f)^2}{\varepsilon^2}}$$ and 
$$ \alpha = {\left( \frac{\text{FR}(f)}{\varepsilon} \right)}^2,$$ with 
$$ \varepsilon=\frac{1}{4(1+2r)}.$$ 

This completes the proof. 
\vskip.125in 

\subsection{Proof of Theorem \ref{theorem:perturbation}} By the triangle and reverse triangle inequalities,
\[
\|\widehat f\|_1-\|\widehat n\|_1 \le \|\widehat{f+n}\|_1 \le \|\widehat f\|_1+\|\widehat n\|_1,
\]
\[
\|\widehat f\|_2-\|\widehat n\|_2 \le \|\widehat{f+n}\|_2 \le \|\widehat f\|_2+\|\widehat n\|_2.
\]
In terms of $A,B,s,t$ this gives, for $t<s$,
\[
\frac{\max\{A-B,0\}}{s+t} \le \text{FR}(f+n) \le \frac{A+B}{s-t}.
\]

For the upper deviation,
\[
\text{FR}(f+n)-\frac{A}{s} \le \frac{A+B}{s-t}-\frac{A}{s}
=\frac{Bs+At}{s(s-t)}
=\frac{B + (A/s)t}{s-t}
=\frac{B + \text{FR}(f)t}{s-t}.
\]

For the lower deviation, using $\max\{A-B,0\} \ge A-B$,
\[
\frac{A}{s}-\text{FR}(f+n) \le \frac{A}{s}-\frac{A-B}{s+t}
=\frac{Bs+At}{s(s+t)}
\le \frac{Bs+At}{s(s-t)}
=\frac{B + \text{FR}(f)t}{s-t}.
\]
Combining the two one-sided bounds yields
\[
\big|\text{FR}(f+n)-\text{FR}(f)\big|
\le \frac{B + \text{FR}(f)t}{s-t}
=\frac{\|\widehat n\|_1 + \text{FR}(f)\|\widehat n\|_2}{\|\widehat f\|_2 - \|\widehat n\|_2}.
\]

For the stated corollary, apply Cauchy--Schwarz to get $\|\widehat n\|_1 \le \sqrt{N}\|\widehat n\|_2$ and $\|\widehat f\|_1 \le \sqrt{N}\|\widehat f\|_2$, hence $\text{FR}(f)\le \sqrt{N}$, which gives the first displayed bound. The second bound follows from $\text{FR}(f)\le \sqrt{N}$. If $\|\widehat n\|_2 \le \frac12 \|\widehat f\|_2$, then $\|\widehat f\|_2-\|\widehat n\|_2 \ge \frac12 \|\widehat f\|_2$, yielding the final inequality.

\vskip.125in 

\subsection{Proof of Theorem \ref{theorem:gaussperturbation}} Set $A=\|\widehat f\|_1$, $B=\|\widehat n\|_1$, $s=\|\widehat f\|_2$, $t=\|\widehat n\|_2$. By the triangle and reverse triangle inequalities,
\[
\|\widehat f\|_1-\|\widehat n\|_1 \le \|\widehat{f+n}\|_1 \le \|\widehat f\|_1+\|\widehat n\|_1,
\qquad
\|\widehat f\|_2-\|\widehat n\|_2 \le \|\widehat{f+n}\|_2 \le \|\widehat f\|_2+\|\widehat n\|_2.
\]
For $t<s$ this implies
\[
\frac{\max\{A-B,0\}}{s+t} \le \text{FR}(f+n) \le \frac{A+B}{s-t}.
\]
Hence
\[
\text{FR}(f+n)-\frac{A}{s} \le \frac{A+B}{s-t}-\frac{A}{s}=\frac{Bs+At}{s(s-t)},
\qquad
\frac{A}{s}-\text{FR}(f+n) \le \frac{Bs+At}{s(s+t)} \le \frac{Bs+At}{s(s-t)}.
\]
Therefore
\[
\left|\text{FR}(f+n)-\text{FR}(f)\right| \le \frac{Bs+At}{s(s-t)}=\frac{B+(A/s)t}{s-t}.
\]
Since $A/s=\text{FR}(f)$ and $B=t\,\text{FR}(n)$, we obtain the deterministic bound
\begin{equation}\label{eq:det}
\left|\text{FR}(f+n)-\text{FR}(f)\right| \le \frac{\left(\text{FR}(n)+\text{FR}(f)\right)t}{s-t}
\quad\text{whenever } t<s.
\end{equation}

Because the DFT is unitary and $n$ has i.i.d. circular complex Gaussian entries with variance $\sigma^2$, the Fourier coefficients $\widehat n(m)$ are i.i.d. circular complex Gaussian with the same variance. Equivalently,
\[
\|\widehat n\|_2^2=\sum_{m=0}^{N-1} |\widehat n(m)|^2 \sim \frac{\sigma^2}{2}\,\chi^2_{2N}.
\]
Let $g\in\mathbb{R}^{2N}$ be standard normal and write $\widehat n=(\sigma/\sqrt{2})\,g$. We will use the following standard tail bounds at this point.

\begin{theorem}[Gaussian concentration for Lipschitz functions; \cite{Vershynin2018} (Prop. 3.1.8), \cite{BLM2013} (Thm. 5.6), and \cite{Ledoux2001} (Thm. 5.6 with the corollary to the Euclidean norm).]
Let $g\sim \mathcal{N}(0,I_m)$ and let $f:\mathbb{R}^m\to\mathbb{R}$ be $L$-Lipschitz with respect to the Euclidean norm. Then for all $t\ge 0$,
\[
\Pr\big(f(g)\ge \mathbb{E}f(g)+t\big)\le \exp\!\left(-\frac{t^2}{2L^2}\right),
\qquad
\Pr\big(|f(g)-\mathbb{E}f(g)|\ge t\big)\le 2\exp\!\left(-\frac{t^2}{2L^2}\right).
\]
In particular, taking $f(u)=\|u\|_2$ (which is 1-Lipschitz) and any $\gamma\in(0,1)$,
\[
\Pr\Big(\|g\|_2 \le \sqrt{m} + \sqrt{2\log\!\big(1/\gamma\big)}\Big) \ge 1-\gamma.
\]
\end{theorem}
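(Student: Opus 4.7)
The plan is to establish this via the classical Herbst argument, which combines the Gaussian logarithmic Sobolev inequality (Gross, 1975) with the Chernoff bound. Recall that the Gaussian log-Sobolev inequality states that for every sufficiently smooth $F:\mathbb{R}^m\to\mathbb{R}$ with $\mathbb{E}F(g)^2<\infty$,
\[
\mathbb{E}\bigl[F(g)^2\log F(g)^2\bigr]-\mathbb{E}[F(g)^2]\log\mathbb{E}[F(g)^2]\le 2\,\mathbb{E}\bigl[\|\nabla F(g)\|^2\bigr].
\]
I would invoke this as a black box; it can be derived from hypercontractivity of the Ornstein--Uhlenbeck semigroup, from two-point tensorization of the Gaussian semigroup, or from Talagrand's $T_2$ transportation inequality.

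First I would reduce to smooth $f$ by mollifying $f_\varepsilon=f*\phi_\varepsilon$; convolution preserves the Lipschitz constant, and $\|\nabla f_\varepsilon\|_\infty\le L$ almost everywhere, so it suffices to prove the inequality in the smooth case and then pass to the limit by monotone/dominated convergence in the moment generating function. Without loss of generality assume $\mathbb{E}f(g)=0$. Next I would carry out the Herbst trick: apply the log-Sobolev inequality to $F=e^{\lambda f/2}$, for which $\|\nabla F\|^2=\tfrac{\lambda^2}{4}e^{\lambda f}\|\nabla f\|^2\le \tfrac{\lambda^2 L^2}{4}e^{\lambda f}$. Setting $\psi(\lambda)=\mathbb{E}e^{\lambda f(g)}$ and $H(\lambda)=\lambda^{-1}\log\psi(\lambda)$, the log-Sobolev inequality becomes the differential inequality
\[
\lambda\psi'(\lambda)-\psi(\lambda)\log\psi(\lambda)\le \tfrac{\lambda^2 L^2}{2}\psi(\lambda),
\]
which after division by $\lambda^2\psi(\lambda)$ reads $H'(\lambda)\le L^2/2$. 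Since $H(\lambda)\to \mathbb{E}f(g)=0$ as $\lambda\to 0^+$, integrating gives $\log\psi(\lambda)\le L^2\lambda^2/2$.

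Then I would apply the Chernoff bound: for $\lambda,t>0$,
\[
\Pr\bigl(f(g)\ge t\bigr)\le e^{-\lambda t}\psi(\lambda)\le \exp\bigl(-\lambda t + L^2\lambda^2/2\bigr),
\]
and optimizing at $\lambda=t/L^2$ yields the one-sided bound $\exp(-t^2/(2L^2))$. Applying the same reasoning to $-f$ and taking a union bound gives the two-sided bound with constant $2$. For the specific consequence about $\|g\|_2$: this function is $1$-Lipschitz, and by Jensen $\mathbb{E}\|g\|_2\le(\mathbb{E}\|g\|_2^2)^{1/2}=\sqrt{m}$, so choosing $t=\sqrt{2\log(1/\gamma)}$ in the one-sided tail bound gives the stated event of probability at least $1-\gamma$.

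The main obstacle is the Gaussian log-Sobolev inequality itself; everything downstream is essentially a mechanical unpacking. If a self-contained derivation were desired, I would set up the Ornstein--Uhlenbeck semigroup $P_t$ on $L^2(\gamma_m)$ and use the commutation relation $\nabla P_t=e^{-t}P_t\nabla$ together with Cauchy--Schwarz to control $\tfrac{d}{dt}\mathbb{E}[P_t(F^2)\log P_t(F^2)]$; integrating from $0$ to $\infty$ produces exactly the constant $2$ on the right-hand side. A secondary subtlety worth checking carefully is the smoothness reduction, since the log-Sobolev inequality is stated for smooth $F$ but we only assume $f$ is Lipschitz; however, the mollification argument above makes this transparent.
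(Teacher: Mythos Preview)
Your proof is correct. The Herbst argument via the Gaussian log-Sobolev inequality is one of the standard routes to this result, and your execution is clean: the differential inequality for $H(\lambda)=\lambda^{-1}\log\psi(\lambda)$ is derived correctly, the mollification step handles the Lipschitz-to-smooth reduction, and the corollary for $\|g\|_2$ follows from $\mathbb{E}\|g\|_2\le\sqrt{m}$ exactly as you indicate.

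The paper, however, does not prove this theorem at all. It is quoted as a black box from the cited references (Vershynin, Boucheron--Lugosi--Massart, Ledoux) and used as a tool inside the proof of the Gaussian perturbation bound for the Fourier ratio. So there is no ``paper's own proof'' to compare against; you have supplied a self-contained argument where the paper simply invokes the literature. Your approach is in fact the one given in the Boucheron--Lugosi--Massart and Ledoux references the paper cites, so in that sense you have reconstructed precisely the proof the authors had in mind when they wrote the citation.
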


Applying this with $m=2N$ gives
\[
\|g\|_2 \le \sqrt{2N} + \sqrt{2\log\!\left(\frac{1}{\gamma}\right)}
\quad\text{with probability at least } 1-\gamma,
\]
and therefore
\[
\|\widehat n\|_2 \le \frac{\sigma}{\sqrt{2}}\left(\sqrt{2N} + \sqrt{2\log\!\left(\frac{1}{\gamma}\right)}\right)
= \sigma\left(\sqrt{N}+\sqrt{\log\!\left(\frac{1}{\gamma}\right)}\right) =: t_\gamma
\]
with probability at least $1-\gamma$.

We also record a slightly sharper alternative that can replace the previous one when desired.

\begin{theorem}[Laurent--Massart $\chi^2$-inequality; \cite{LaurentMassart2000} (Lemma 1, original source) and \cite{Wainwright2019} (Prop. 2.1, a self-contained proof).]
Let $X\sim \chi^2_m$. Then for all $x\ge 0$,
\[
\Pr\big(X-m \ge 2\sqrt{mx}+2x\big)\le e^{-x},
\qquad
\Pr\big(m-X \ge 2\sqrt{mx}\big)\le e^{-x}.
\]
Equivalently, with probability at least $1-e^{-x}$,
\[
\sqrt{X}\le \sqrt{m}+\sqrt{2x}+\frac{x}{\sqrt{m}}.
\]
\end{theorem}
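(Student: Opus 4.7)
The plan is to prove both tail bounds by the Cram\'er--Chernoff method applied to the explicit chi-squared moment generating function, and then read off the square-root statement as an algebraic corollary. Write $X=\sum_{i=1}^m Z_i^2$ with $Z_i$ i.i.d.\ standard normal, so that $\mathbb E[e^{\lambda X}]=(1-2\lambda)^{-m/2}$ for $\lambda<1/2$ and hence
\[
\Lambda(\lambda):=\log\mathbb E[e^{\lambda(X-m)}]=-\lambda m-\tfrac{m}{2}\log(1-2\lambda).
\]
The only analytic input I would need is the elementary power-series inequality $-\log(1-u)-u\le u^2/(2(1-u))$ for $u\in[0,1)$, which at $u=2\lambda$ yields the sub-exponential MGF estimate $\Lambda(\lambda)\le m\lambda^2/(1-2\lambda)$ valid on $[0,1/2)$.

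For the upper tail I would apply the Chernoff inequality $\Pr(X-m\ge t)\le\exp(\Lambda(\lambda)-\lambda t)$ with the specific choice $\lambda=\alpha/(1+2\alpha)$, where $\alpha=\sqrt{x/m}$. A short calculation gives $1-2\lambda=1/(1+2\alpha)$ and hence $m\lambda^2/(1-2\lambda)=m\alpha^2/(1+2\alpha)$, while at $t=2\sqrt{mx}+2x=2m\alpha(1+\alpha)$ one has $\lambda t=2m\alpha^2(1+\alpha)/(1+2\alpha)$; subtracting collapses the exponent to exactly $-m\alpha^2=-x$, which is the desired bound. For the lower tail the same MGF computation with $-\lambda$ in place of $\lambda$ gives $\log\mathbb E[e^{-\lambda(X-m)}]=\lambda m-\tfrac{m}{2}\log(1+2\lambda)$, and the easy inequality $\log(1+u)\ge u-u^2/2$ produces the purely sub-Gaussian estimate $\log\mathbb E[e^{-\lambda(X-m)}]\le m\lambda^2$; the textbook Gaussian Chernoff optimization $\lambda=t/(2m)$ at $t=2\sqrt{mx}$ then yields $\Pr(m-X\ge 2\sqrt{mx})\le e^{-x}$.

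The square-root inequality is a purely algebraic consequence of the upper tail: on the event $\{X\le m+2\sqrt{mx}+2x\}$, expanding $(\sqrt m+\sqrt{2x}+x/\sqrt m)^2=m+4x+x^2/m+2\sqrt{2mx}+2x\sqrt{2x/m}$ and using $\sqrt 2>1$ shows this quantity dominates $m+2\sqrt{mx}+2x$, so taking square roots gives $\sqrt X\le\sqrt m+\sqrt{2x}+x/\sqrt m$. The one genuine obstacle in the whole argument is obtaining the sharp coefficient $2$ in front of the $x$-term of the upper tail: the naive Chernoff minimization $\lambda=t/(2(m+t))$ yields only $\Pr(X-m\ge t)\le e^{-t^2/(4(m+t))}$, which after inversion gives the weaker tail $2\sqrt{mx}+4x$, and cutting this down to $2x$ requires either the specific substitution $\lambda=\alpha/(1+2\alpha)$ above or, equivalently, computing the Legendre conjugate of the Bernstein rate function $\psi(\lambda)=m\lambda^2/(1-2\lambda)$ directly rather than minimizing the Chernoff exponent naively.
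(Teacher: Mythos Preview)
Your argument is correct. The Cram\'er--Chernoff computation for the upper tail, with the substitution $\lambda=\alpha/(1+2\alpha)$ where $\alpha=\sqrt{x/m}$, is exactly the step that recovers the sharp constant $2$ in front of $x$, and your arithmetic checks: the exponent collapses precisely to $-m\alpha^2=-x$. The lower-tail argument via $\log(1+u)\ge u-u^2/2$ and the sub-Gaussian optimization is equally clean, and the algebraic verification that $(\sqrt m+\sqrt{2x}+x/\sqrt m)^2\ge m+2\sqrt{mx}+2x$ is fine.

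As for comparison with the paper: the paper does not prove this statement at all. It is quoted as a tool inside the proof of Theorem~\ref{theorem:gaussperturbation} (and again in the proof of Theorem~\ref{theorem:smoothingeffect}), with the proof delegated to the cited references \cite{LaurentMassart2000} (Lemma~1) and \cite{Wainwright2019} (Proposition~2.1). Your proof is essentially the argument one finds in those sources, so there is no methodological divergence to discuss; you have simply supplied what the paper chose to outsource.
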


Taking $X=\|g\|_2^2$, $m=2N$, and $x=\log(1/\gamma)$ yields
\[
\|g\|_2 \le \sqrt{2N} + \sqrt{2\log\!\left(\frac{1}{\gamma}\right)} + \frac{\log\!\left(\frac{1}{\gamma}\right)}{\sqrt{2N}}
\]
with probability at least $1-\gamma$, which translates to
\[
\|\widehat n\|_2 \le \sigma\left(\sqrt{N}+\sqrt{\log\!\left(\frac{1}{\gamma}\right)}+\frac{\log\!\left(\frac{1}{\gamma}\right)}{2\sqrt{N}}\right).
\]

Returning to \eqref{eq:det}, on the event $\{\|\widehat n\|_2 \le t_\gamma\}$ and provided $t_\gamma<s=\|\widehat f\|_2$, we obtain
\[
\left|\text{FR}(f+n)-\text{FR}(f)\right| \le \frac{\left(\text{FR}(n)+\text{FR}(f)\right)\,t_\gamma}{s-t_\gamma}.
\]
Finally, using $\|\widehat f\|_2=s=\|f\|_2$ gives the equivalent form in terms of $\|f\|_2$.

\vskip.125in 

\subsection{Proof of Theorem \ref{theorem:smoothingeffect}} 

Write $\overline{n}(x)=\frac{1}{n}\sum_{i=1}^n n_i(x)$, so that $f=s+\overline{n}$. Since each $n_i(x)$ is mean zero and independent, $\mathbb{E}\,\overline{n}(x)=0$, and by Gaussian stability $\overline{n}(x)\sim \mathcal{CN}(0,\sigma^2/n)$. This proves (a).

For (b), independence across $x$ gives
\[
\mathbb{E}\,\|f-s\|_2^2
=\sum_{x\in\mathbb{Z}_N}\mathbb{E}\,|\overline{n}(x)|^2
=\sum_{x\in\mathbb{Z}_N}\frac{\sigma^2}{n}
=\frac{N\sigma^2}{n}.
\]

For (c), stack real and imaginary parts of $\overline{n}$ into $g\in\mathbb{R}^{2N}$ with i.i.d.\ $\mathcal{N}(0,1)$ entries, so that
\[
\overline{n}=\frac{\sigma}{\sqrt{2n}}\,g
\quad\text{and}\quad
\|\overline{n}\|_2=\frac{\sigma}{\sqrt{2n}}\|g\|_2.
\]
By Gaussian concentration for Lipschitz functions (take $f(u)=\|u\|_2$, which is 1-Lipschitz; see for example \cite{Vershynin2018,BLM2013,Ledoux2001}), for all $t\ge 0$,
\[
\Pr\big(\|g\|_2\ge \sqrt{2N}+t\big)\le e^{-t^2/2}.
\]
Taking $t=\sqrt{2\log(1/\gamma)}$ yields
\[
\|g\|_2\le \sqrt{2N}+\sqrt{2\log(1/\gamma)} \quad\text{with probability at least } 1-\gamma,
\]
hence
\[
\|\overline{n}\|_2\le \frac{\sigma}{\sqrt{2n}}\Big(\sqrt{2N}+\sqrt{2\log(1/\gamma)}\Big)
= \frac{\sigma}{\sqrt{n}}\Big(\sqrt{N}+\sqrt{\log(1/\gamma)}\Big)
\]
with probability at least $1-\gamma$, proving the displayed inequality. Using the Laurent--Massart chi-square inequality for $\chi^2_{2N}$ (see \cite{LaurentMassart2000}, and also \cite{Wainwright2019} for a modern proof) gives
\[
\|g\|_2\le \sqrt{2N}+\sqrt{2\log(1/\gamma)}+\frac{\log(1/\gamma)}{\sqrt{2N}}
\]
with probability at least $1-\gamma$, which translates to the refined bound after multiplying by $\sigma/\sqrt{2n}$.
\vskip 0.25in
\subsection{Proof of Theorem \ref{theorem:FRofaverage}} Let $x=\widehat s$ and $e=\widehat u$ for some perturbation $u$. By the triangle and reverse triangle inequalities,
\[
\|x\|_1-\|e\|_1 \le \|x+e\|_1 \le \|x\|_1+\|e\|_1,\qquad
\|x\|_2-\|e\|_2 \le \|x+e\|_2 \le \|x\|_2+\|e\|_2.
\]
Writing $A=\|x\|_1$, $B=\|e\|_1$, $s_2=\|x\|_2$, $t=\|e\|_2$, for $t<s_2$ one obtains
\[
\frac{\max\{A-B,0\}}{s_2+t} \le \frac{\|x+e\|_1}{\|x+e\|_2} \le \frac{A+B}{s_2-t}.
\]
A short rearrangement gives
\[
\left|\frac{\|x+e\|_1}{\|x+e\|_2}-\frac{A}{s_2}\right| \le \frac{B+(A/s_2)t}{s_2-t}.
\]
Since $A/s_2=\text{FR}(s)$ and $B=\|\widehat u\|_1=\text{FR}(u)\|u\|_2$, and $\|x+e\|_1/\|x+e\|_2=\text{FR}(s+u)$, we have the deterministic bound
\begin{equation}\label{eq:det}
|\text{FR}(s+u)-\text{FR}(s)| \le \frac{\big(\text{FR}(u)+\text{FR}(s)\big)\|u\|_2}{\|s\|_2-\|u\|_2}\qquad\text{whenever }\|u\|_2<\|s\|_2.
\end{equation}
In particular, if $\|u\|_2 \le \tfrac12\|s\|_2$, then
\begin{equation}\label{eq:det_simple}
|\text{FR}(s+u)-\text{FR}(s)| \le \frac{2\|u\|_2}{\|s\|_2}\big(\text{FR}(u)+\text{FR}(s)\big).
\end{equation}

We now establish the high-probability radii for the noises. Stack the real and imaginary parts of a complex vector into $\mathbb{R}^{2N}$. Since $n_i(x)\sim \mathcal{CN}(0,\sigma^2)$ are independent and the map $u\mapsto\|u\|_2$ is 1-Lipschitz, the same standard Gaussian norm concentration bound as the one we used in the proof of Theorem \ref{theorem:gaussperturbation} implies
\[
\Pr\big(\|n_i\|_2 \le \sigma r_\gamma\big) \ge 1-\gamma,\qquad
\Pr\big(\|\overline n\|_2 \le \sigma r_\gamma/\sqrt{n}\big) \ge 1-\gamma,
\]
where $r_\gamma=\sqrt{N}+\sqrt{\log(1/\gamma)}$.

We now apply the deterministic bound. Assume $\|s\|_2\ge 2\sigma r_\gamma$. Then on the events above we have $\|\overline n\|_2 \le \tfrac12\|s\|_2$ and $\|n_i\|_2 \le \tfrac12\|s\|_2$. Apply \eqref{eq:det_simple} with $u=\overline n$ to get
\[
|\text{FR}(f)-\text{FR}(s)| = |\text{FR}(s+\overline n)-\text{FR}(s)| \le \frac{2\|\overline n\|_2}{\|s\|_2}\big(\text{FR}(\overline n)+\text{FR}(s)\big)
\le \frac{2\sigma r_\gamma}{\sqrt{n}\|s\|_2}\big(\text{FR}(\overline n)+\text{FR}(s)\big),
\]
with probability at least $1-\gamma$. Similarly, with $u=n_i$,
\[
|\text{FR}(f_i)-\text{FR}(s)|
\le \frac{2\|n_i\|_2}{\|s\|_2}\big(\text{FR}(n_i)+\text{FR}(s)\big)
\le \frac{2\sigma r_\gamma}{\|s\|_2}\big(\text{FR}(n_i)+\text{FR}(s)\big),
\]
with probability at least $1-\gamma$ for each fixed $i$.

Finally, note that $\text{FR}(\alpha e)=\text{FR}(e)$ for any $\alpha>0$, so $\text{FR}(\overline n)$ and $\text{FR}(n_i)$ have the same distribution (both are the $FR$ of a circular complex Gaussian vector up to a positive scale). This justifies the comparison stated in part (ii).

\vskip.125in 

\subsection{Proof of Theorem \ref{thm:sparse-spectrum approximation theorem}}
    Let $\Gamma$ be defined as above. By our assumption on $f$ as well as Markov's inequality, we have that
    $$\text{FR}(f) \|f\|_2 \geq \sum_{m\in \mathbb Z_N} |\widehat f(m)| \geq \eta N^{-\frac12} \|f\|_2 |\Gamma|,$$
    and thus if $f$ is nonzero,
    $$|\Gamma| \leq \frac{\text{FR}(f)}{\eta} \sqrt N.$$
    Next, defining $P$ as above, note that $P$ is the inverse Fourier transform of $\widehat f \cdot1_{\Gamma}$. Additionally, for $m\notin \Gamma$, we have that
    $$|\widehat f(m)| < \eta \frac{\|f\|_2}{N^\frac12}.$$
    Thus, by the Plancherel,  we have that
    \begin{align*}
        \|f-P\|_2 &= \|\widehat f - \widehat P\|_2 \\
        &= \|\widehat f - \widehat f 1_\Gamma \|_2 \\
        &= \left(\sum_{m\notin\Gamma} |\widehat f(m)|^2 \right)^\frac12 \\
        &\leq \eta \|f\|_2,
    \end{align*}
    and we are done. \qed

\vskip.25in 
    
\subsection{Proof of Theorem \ref{thm: generalizedchang}}

Our proof mostly follows Chang's original proof found in \cite{Chang02}. Recall $$\Gamma=\left\{ m\in\mathbb Z_N \,:\, |\widehat{f}(m)| \geq \eta\frac{\|f\|_2}{\sqrt{N}}\right\}.$$

Let $\Lambda$ be a maximal dissociated subset of $\Gamma$, that is, a maximal subset with the property that all $\{-1, 0, 1\}$-linear combinations of elements of $\Lambda$ are distinct, and note $\Gamma$ must be contained in the $\{-1,0,1\}$-span of $\Lambda$.

Let $$g(x)=\frac{1}{\|1_\Lambda\widehat{f}\|_2}\sum_{n\in\Lambda}\widehat{f}(n)\chi(xn),$$ and $p'=\frac{p}{p-1}$. Then\begin{align*}
    \|f\|_{L^{p'}(\mu)}\|g\|_{L^p(\mu)}&\geq\|fg\|_{L^1(\mu)} \\
    &\geq\frac{1}{N\|1_\Lambda\widehat{f}\|_2}\left|\sum_{x\in\mathbb{Z}_N}f(x)\sum_{n\in\Lambda}\overline{\widehat{f}(n)}\chi(-xn)\right| \\
    &=\frac{1}{N\|1_\Lambda\widehat{f}\|_2}\left|\sum_{n\in\Lambda}\overline{\widehat{f}(n)}\sum_{x\in\mathbb{Z}_N}f(x)\chi(-xn)\right| \\
    &=\frac{\sqrt{N}}{N\|1_\Lambda\widehat{f}\|_2}\sum_{n\in\Lambda}|\widehat{f}(n)|^2 \\
    &=\frac{1}{\sqrt{N}}\|1_\Lambda\widehat{f}\|_2 \\
    &\geq\eta\|f\|_2\frac{\sqrt{|\Lambda|}}{N}.
\end{align*}
By definition,\begin{align*}
    \|f\|_{L^{p'}(\mu)}=N^{\frac{1}{p}-1}\|f\|_{p'}.
\end{align*} We also have that there is some absolute constant $C$ such that
\begin{align*}
    \|g\|_{L^p(\mu)}\leq C\sqrt{p}
\end{align*}by Rudin's inequality (see Lemma 4.33 in \cite{TV10}). Combining the above yields\begin{align*}
    \eta\|f\|_2\frac{\sqrt{|\Lambda|}}{N}&\leq C\sqrt{p}N^{\frac{1}{p}-1}\|f\|_{p'},
\end{align*} i.e.,
\begin{align*}
    \sqrt{|\Lambda|}&\leq C\eta^{-1}\sqrt{p}N^\frac{1}{p}\frac{\|f\|_{p'}}{\|f\|_2}.
\end{align*}
Now, taking $p=\log N$, we obtain\begin{align*}
    \sqrt{|\Lambda|}&\leq C\eta^{-1}\sqrt{\log N}\frac{\|f\|_{p'}}{\|f\|_2},
    \end{align*} i.e.,
    \begin{align*}
    |\Lambda|&\leq C'\eta^{-2}\log N\frac{\|f\|_{p'}^2}{\|f\|_2^2},
\end{align*} where $C'=  (Ce)^2$, 
thus proving \eqref{eq:generalizedchang lognorm}.

To prove \eqref{eq:generalizedchang l2l1}, observe that whenever $p \geq 2$, we have
\begin{align*}
    \|f\|_{L^{p'}(\mu)}&=N^{\frac{1}{p}-1}\|f\|_{p'} \\
    &=N^{\frac{1}{p}-1}\left(\sum_{x\in\mathbb{Z}_N}|f(x)|^\frac{p}{p-1}\right)^\frac{p-1}{p} \\
    &=N^{\frac{1}{p}-1}\left(\sum_{x\in\mathbb{Z}_N}|f(x)||f(x)|^\frac{1}{p-1}\right)^\frac{p-1}{p} \\
    &=N^{\frac{1}{p}-1}\left(\|f\|_1\sum_{x\in\mathbb{Z}_N}\frac{|f(x)|}{\|f\|_1}|f(x)|^\frac{1}{p-1}\right)^\frac{p-1}{p} \\
    &\leq N^{\frac{1}{p}-1}\left(\|f\|_1\left(\sum_{x\in\mathbb{Z}_N}\frac{|f(x)|}{\|f\|_1}|f(x)|\right)^\frac{1}{p-1}\right)^\frac{p-1}{p}
\end{align*}
by Jensen's inequality. This implies
\begin{align*}
    \|f\|_{L^{p'}(\mu)}&\leq N^{\frac{1}{p}-1}\|f\|_1^\frac{p-1}{p}\left(\frac{\|f\|_2^2}{\|f\|_1}\right)^\frac{1}{p}=N^{\frac{1}{p}-1}\|f\|_1\left(\frac{\|f\|_2}{\|f\|_1}\right)^\frac{2}{p}.
\end{align*}
Again, combining with our previous bounds,\begin{align*}
    \eta\|f\|_2\frac{\sqrt{|\Lambda|}}{N}&\leq C\sqrt{p}N^{\frac{1}{p}-1}\|f\|_1\left(\frac{\|f\|_2}{\|f\|_1}\right)^\frac{2}{p},
\end{align*}i.e.,\begin{align*}
    \sqrt{|\Lambda|}&\leq C\eta^{-1}\sqrt{p}\left(\frac{\|f\|_2^2}{\|f\|_1^2}N\right)^\frac{1}{p}\frac{\|f\|_1}{\|f\|_2}.
\end{align*}
Now, taking $$p=\log\left(\frac{\|f\|_2^2}{\|f\|_1^2}N\right),$$ and using the fact that $p\geq 2$ by our assumption that $\frac{\|f\|_1}{\|f\|_2} \leq \frac{1}{e} \sqrt N$, we obtain\begin{align*}
    \sqrt{|\Lambda|}&\leq C\eta^{-1}\sqrt{\log\left(\frac{\|f\|_2^2}{\|f\|_1^2}N\right)}\frac{\|f\|_1}{\|f\|_2}.
    \end{align*} Thus,
    \begin{align*}
    |\Lambda|&\ll\eta^{-2}\log\left(\frac{\|f\|_2^2}{\|f\|_1^2}N\right)\frac{\|f\|_1^2}{\|f\|_2^2},
\end{align*}
and this proves \eqref{eq:generalizedchang l2l1}.
\qed

\newpage

\end{document}